
\documentclass[11pt,makeidx]{amsart}
\usepackage{amsfonts,amssymb,amsmath,amscd, bbm,}
\usepackage{tikz, tikz-cd}
\usetikzlibrary{positioning}
\usepackage{amscd}
\usepackage{hyperref}
\usepackage[all]{xy}
\font\cyr=wncyr10


\topmargin = 0.2 in
\headheight = 0.0 in
\headsep = 0.4 in
\parskip = 0.1 in
\parindent = 0.375 in
\textwidth = 6 in
\textheight = 8.5 in
\oddsidemargin = 0.25 in
\evensidemargin = 0.25 in
\hfuzz 12pt
\overfullrule 0pt


\newcommand{\A}{{\mathbb A}}

\newcommand{\bb}{{{\mathbbm b}}}
\newcommand{\C}{{\mathbb C}}
\newcommand{\F}{{\mathbb F}}
\newcommand{\ff}{{\mathbbm f}}
\newcommand{\G}{{\mathbb G}}

\newcommand{\Q}{{\mathbb Q}}
\newcommand{\Ql}{{\Q_\ell}}
\newcommand{\Qp}{{\Q_p}}
\newcommand{\bQ}{{\overline\Q}}

\newcommand{\bQp}{\bQ_p}
\newcommand{\R}{{\mathbb R}}

\newcommand{\Z}{{\mathbb Z}}
\newcommand{\Zl}{{\Z_\ell}}
\newcommand{\Zp}{{\Z_p}}
\newcommand{\bK}{\mathbb{K}}

\newcommand{\gra}{{\mathfrak a}}

\newcommand{\grb}{{\mathfrak b}}

\newcommand{\grh}{{\mathfrak h}}

\newcommand{\grm}{{\mathfrak m}}

\newcommand{\grO}{{\mathfrak O}}
\newcommand{\grp}{{\mathfrak p}}
\newcommand{\p}{{\mathfrak p}}
\newcommand{\grq}{{\mathfrak q}}

\newcommand{\eps}{{\epsilon}}

\newcommand{\kap}{{\kappa}}

\newcommand{\cA}{{\mathcal A}}

\newcommand{\CalD}{{\mathcal D}}
\newcommand{\CI}{{\mathcal I}}
\newcommand{\cE}{\mathcal{E}}
\newcommand{\cF}{\mathcal{F}}
\newcommand{\cG}{\mathcal{G}}

\newcommand{\cK}{{\mathcal K}}
\newcommand{\CL}{{\mathcal L}}
\newcommand{\cL}{{\mathcal L}}
\newcommand{\CM}{{\mathcal M}}

\renewcommand{\O}{{\mathcal O}}
\newcommand{\cO}{\mathcal{O}}
\newcommand{\cP}{{\mathcal P}}

\newcommand{\CU}{{\mathcal U}}

\newcommand{\CX}{{\mathcal X}}

\newcommand{\ac}{\mathrm{ac}}
\newcommand{\Ad}{\mathrm{Ad}}
\newcommand{\alg}{\mathrm{alg}}

\newcommand{\Aut}{\mathrm{Aut}}
\newcommand{\BK}{\mathrm{BK}}
\newcommand{\bv}{{\bar v}}
\newcommand{\can}{\mathrm{can}}

\newcommand{\coker}{\mathrm{coker}}
\newcommand{\comment}[1]{}
\newcommand{\cris}{cris}

\renewcommand{\div}{\mathrm{div}}
\newcommand{\ds}{\displaystyle}
\newcommand{\End}{\mathrm{End}}
\newcommand{\FBK}{\cF_{\BK}}
\newcommand{\frob}{\mathrm{frob}}

\newcommand{\Gal}{\mathrm{Gal}}
\newcommand{\GL}{\mathrm{GL}}
\newcommand{\Hida}{\mathrm{Hida}}
\newcommand{\Hom}{\mathrm{Hom}}

\newcommand{\isoarrow}{\stackrel{\sim}{\rightarrow}}
\newcommand{\isom}{\cong}

\newcommand{\loc}{\mathrm{loc}}
\newcommand{\M}{\mathrm{M}}

\newcommand{\ord}{\mathrm{ord}}
\newcommand{\ov}{\overline}

\newcommand{\ra}{\rightarrow}
\newcommand{\xra}[1]{\xrightarrow{#1}}
\newcommand{\hra}{\hookrightarrow}
\newcommand{\rank}{\mathrm{rank}}
\newcommand{\rCM}{\mathrm{CM}}
\newcommand{\rec}{\mathrm{rec}}

\newcommand{\res}{\mathrm{res}}
\newcommand{\Res}{\mathrm{Res}}
\newcommand{\rhobar}{\overline{\rho}}

\newcommand{\Sel}{\mathrm{Sel}}
\newcommand{\Sha}{\hbox{\cyr X}}

\newcommand{\tor}{\mathrm{tor}}

\newcommand{\ur}{\mathrm{ur}}

\newcommand{\XN}{{X_{N^+,N^-}}}
\newcommand{\XNs}{{X_{N^+,N^-}^*}}
\newcommand{\wB}{\widehat{B}}
\newcommand{\wZ}{\widehat{\Z}}

\newcommand{\vbar}{\overline{v}}
\newcommand{\SelGr}{\cF_{\Gr}}

\newcommand{\SelMin}{\cF_{\ac}}

\newcommand{\Xac}{X_{\ac}}
\newcommand{\fac}{f_{\ac}}


\renewcommand{\H}{\HH}

\DeclareMathOperator{\ab}{ab}

\DeclareMathOperator{\chr}{char}
\DeclareMathOperator{\cont}{cont}

\DeclareMathOperator{\ddiv}{div}

\DeclareMathOperator{\Frob}{Frob}

\DeclareMathOperator{\Gr}{Gr}
\DeclareMathOperator{\GU}{GU}
\DeclareMathOperator{\HH}{H}

\DeclareMathOperator{\Reg}{Reg}
\DeclareMathOperator{\rk}{rk}
\DeclareMathOperator{\tors}{tors}

\topmargin = 0.2 in
\headheight = 0.0 in
\headsep = 0.4 in
\parskip = 0.1 in
\parindent = 0.1 in
\textwidth = 5.8 in
\textheight = 8 in
\oddsidemargin = 0.25 in
\evensidemargin = 0.25 in
\hfuzz 12pt
\overfullrule 0pt

\makeatletter
\@addtoreset{equation}{subsection}

\makeatother


\newtheorem{coro}[subsubsection]{\bf Corollary}
\newtheorem{lem}[subsubsection]{\bf Lemma}
\newtheorem{thm}[subsubsection]{\bf Theorem}
\newtheorem{prop}[subsubsection]{\bf Proposition}
\newtheorem{conj}[subsubsection]{\bf Conjecture}

\theoremstyle{definition}
\newtheorem{rmk}[subsubsection]{\it Remark}

\newcommand{\authnote}[2][]{\noindent {\if!#1!  {\bf TODO} \else {\small \bf #1} \fi: #2} \vspace{0.1in}}

\newcounter{assumptionnumber}
\setcounter{assumptionnumber}{0}
\newcommand{\assumption}[2][]{%
  \addtocounter{assumptionnumber}{1}%
  \begin{center}%
  \framebox[1.1\width]{\begin{minipage}{0.9\textwidth}%
  \textbf{Assumption \arabic{assumptionnumber}} \textit{\if!#1!\else (#1)\fi}: {#2}%
  \end{minipage}}%
  \end{center}%
}

\author{Dimitar Jetchev}
\address{
\'Ecole Polytechnique F\'ed\'erale de Lausanne\\
FSB MATHGEOM GR-JET\\
B\^atiment MA C3 605\\
CH-1015 Lausanne\\
Switzerland}
\email{dimitar.jetchev@epfl.ch} 

\author{Christopher Skinner}
\address{
Department of Mathematics\\
Princeton University\\
Fine Hall, Washington Road \\
Princeton, NJ 08544-1000\\
USA}
\email{cmcls@princeton.edu}
 
\author{Xin Wan}
\address{
Department of Mathematics\\
Columbia University\\
New York, NY 10027\\
USA 
}
\email{xw222@math.columbia.edu}

\title[The BSD formula for the rank one case]
{The Birch and Swinnerton-Dyer Formula for Elliptic Curves
of Analytic Rank One
}

\begin{document}

\setcounter{section}{0}
\begin{abstract} Let $E/\Q$ be a semistable elliptic curve such that $\ord_{s=1}L(E,s) = 1$.
We prove the $p$-part of the Birch and Swinnerton-Dyer formula for $E/\Q$ for each prime $p\geq 5$ of good reduction such that $E[p]$ is irreducible: 
$$
\ord_p \left (\frac{L'(E,1)}{\Omega_E\cdot\Reg(E/\Q)} \right ) = \ord_p \left (\#\Sha(E/\Q)\prod_{\ell\leq \infty} c_\ell(E/\Q) \right ).
$$
This formula also holds for $p=3$ provided $a_p(E)=0$ if $E$ has supersingular reduction at $p$.
\end{abstract}
\maketitle

%
%
\section{Introduction}\label{bsd-intro}

\subsection{The Birch and Swinnerton-Dyer conjecture}\label{subsec:bsd}
Let $E/\Q$ be an elliptic curve.  The Birch and Swinnerton-Dyer Conjecture for $E$, as stated by Tate \cite[Conj.~4]{tate:arithmetic} (see also \cite{tate:bsd} 
and \cite[(I)-(IV)]{birch:bsd}) is the following:

\begin{conj}[Birch and Swinnerton-Dyer Conjecture]\label{conj:bsd}\hfill
\begin{itemize}
\item[(a)] The order of the zero at $s=1$ of the Hasse--Weil $L$-function $L(E,s)$ is equal to the rank $r$ of the Mordell--Weil group $E(\Q)$.
\item[(b)] Let $\Reg(E/\Q)$ be the regulator of $E(\Q)$ (the discriminant of the N\'eron-Tate height-pairing on $E(\Q)$),
$\Sha(E/\Q)$ the Tate-Shafarevich of $E$, $c_\ell(E/\Q)$ the Tamagawa number of $E$ at a prime $\ell$, and 
$\Omega_E = \int_{E(\R)} |\omega_E|$ for $\omega_E$ a N\'eron differential $($a $\Z$-basis for the differentials of the
N\'eron model of $E$ over $\Z$$)$. Then
\begin{equation}\label{eq:bsd-formula}
\frac{L^{(r)}(E,1)}{r! \cdot \Omega_E \cdot \Reg(E/\Q)} = \frac{\#\Sha(E/\Q)}{(\#E(\Q)_{\tor})^2}\prod_{\ell} c_\ell(E/\Q),
\end{equation}
\end{itemize}
\end{conj}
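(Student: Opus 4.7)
The plan is to combine the modularity theorem, the Gross--Zagier formula, Kolyvagin's Euler system of Heegner points, and the Iwasawa main conjectures, stratifying by the analytic rank $r_{\an}:=\ord_{s=1}L(E,s)$. First I would invoke the modularity theorem (Wiles, Taylor--Wiles, Breuil--Conrad--Diamond--Taylor) to replace $L(E,s)$ by $L(f,s)$ for the weight-$2$ newform $f$ attached to $E$; this supplies the holomorphic continuation and the functional equation needed even to make sense of part (a), and it produces the Galois representations on which every subsequent Selmer-group argument relies.

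\textbf{Part (a).} When $r_{\an}\in\{0,1\}$, I would run the Gross--Zagier/Kolyvagin machine: choose an auxiliary imaginary quadratic field $K$ satisfying a Heegner hypothesis and with sign matching $r_{\an}$, use the Gross--Zagier formula to express $L'(E/K,1)$ as a nonzero multiple of the canonical height of a Heegner point $y_K\in E(K)$, and then apply Kolyvagin's Euler system to deduce $\rank E(K)=1$ and $\#\Sha(E/K)<\infty$. Galois averaging together with the functional equation recovers $\rank E(\Q)=r_{\an}$ and the finiteness of $\Sha(E/\Q)$. For $r_{\an}\geq 2$ no Heegner-type construction is known to produce more than a one-dimensional subspace of $E(\Q)\otimes\Q$, so part (a) here needs a genuinely new input; this is the central obstacle.

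\textbf{Part (b).} Granting (a), the formula (\ref{eq:bsd-formula}) reduces to its $p$-adic refinement at every prime $p$. For good ordinary $p$ with $E[p]$ irreducible, I would combine Kato's Euler system (giving one divisibility in the cyclotomic main conjecture) with the Skinner--Urban proof of the reverse divisibility, then apply Mazur's control theorem to extract the $p$-part when $r_{\an}=0$. For $r_{\an}=1$ I would additionally use the anticyclotomic main conjecture, the BDP $p$-adic Gross--Zagier formula, and $\Lambda$-adic Kolyvagin bounds, which is exactly the package this paper is designed to handle. Good supersingular primes with $a_p=0$ require Kobayashi's $\pm$-Iwasawa theory and the corresponding main-conjecture results, while primes of bad reduction are treated through the theory of local Tamagawa factors and congruences with good-reduction forms. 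Isogeny-invariance of the BSD formula (Cassels) then handles many residually reducible $p$, and $p=2,3$ are treated case by case.

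\textbf{Main obstacles.} The genuinely hard step is part (a) in analytic rank $\geq 2$: no Euler system is currently known that produces rational points accounting for the predicted higher rank, and the upper bound on the $p$-Selmer group needed to match is essentially tantamount to BSD itself. A serious but more technical cluster of obstacles concerns the $p$-part of (b) at $p\in\{2,3\}$ without the hypothesis $a_p(E)=0$, at primes of additive reduction, and at primes where the residual representation $\bar\rho_{E,p}$ is reducible with delicate class-group behavior; here existing Iwasawa-theoretic and congruence arguments only cover favorable sub-cases. Assembling $p$-by-$p$ equalities into the integral identity (\ref{eq:bsd-formula}) further demands that \emph{every} prime be settled, so the full conjecture is conditional on closing each of these gaps simultaneously.
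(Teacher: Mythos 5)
The statement you were asked about is Conjecture~\ref{conj:bsd}, and the paper neither proves it nor claims to: it is stated purely as background, and the paper's actual contribution is Theorem~\ref{thm:main}, which is only the $p$-part of the formula in (b) --- the equality of $p$-adic valuations --- for semistable $E$ with $\ord_{s=1}L(E,s)=1$, $p\geq 5$ a prime of good reduction, $E[p]$ irreducible (with $p=3$ admitted when $a_p(E)=0$ in the supersingular case). There is therefore no proof in the paper against which to compare your attempt. Your proposal correctly recognizes this: it is not a proof but a survey of partial results and obstructions, and in that role it is essentially accurate. Modularity supplies analytic continuation; Gross--Zagier and Kolyvagin settle part (a) for analytic ranks $0$ and $1$; and the $p$-part of (b) is known only piecemeal from Iwasawa main conjectures and $p$-adic special value formulas. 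You are right that analytic rank $\geq 2$ is the central open obstacle and that $p\in\{2,3\}$, additive reduction, and residually reducible primes are the remaining technical ones.

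On the one piece of (b) that the paper does establish --- the rank-one $p$-part --- your sketch is close in spirit but misses the mechanism the paper relies on. The paper derives the lower and upper bounds on $\ord_p(\#\Sha(E/\Q)[p^\infty])$ from two \emph{different} auxiliary imaginary quadratic fields and two \emph{different} Shimura-curve parameterizations of $E$: the lower bound via the anticyclotomic control theorem of Section~\ref{bsd-control}, Wan's one-sided divisibility in the anticyclotomic main conjecture, Burungale's vanishing of the analytic $\mu$-invariant, and the Brooks/BDP formula relating $L_p(f,1)$ to the logarithm of a Heegner point on a Shimura curve $X_{N^+,N^-}$ with $N^->1$; the upper bound via a Kolyvagin argument for a Shimura curve and field chosen so that no Tamagawa number at a prime of $N^+$ is divisible by $p$. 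Each bound is converted into a statement over $\Q$ by appealing to the known rank-zero $p$-part for a quadratic twist $E^{D'}$ or $E^{D''}$. Your ``anticyclotomic main conjecture, BDP, $\Lambda$-adic Kolyvagin'' shorthand gestures at the ingredients but omits the device of varying the quaternion algebra and the auxiliary field between the two inequalities --- which the introduction flags as precisely what is needed to handle Tamagawa numbers divisible by $p$, and which distinguishes this paper from the earlier partial results of Zhang and of Berti--Bertolini--Venerucci.
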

\noindent We call the formula \eqref{eq:bsd-formula} in (b) the {\it BSD formula for $E$}.

\subsection{Main result}
Let $E / \Q$ be a semistable elliptic curve of conductor $N$ (so $N$ is square-free). 
Let $p \geq 3$ be a prime of good reduction (i.e., $p\nmid N$) such that the mod $p$ Galois representation 
$\rhobar_{E, p} \colon \Gal(\overline{\Q} / \Q) \ra \Aut(E[p])$ is irreducible.
Suppose that $\ord_{s = 1} L(E/\Q, s) = 1$. 
In this case the work of Gross and Zagier \cite{gross-zagier} and of Kolyvagin \cite{kolyvagin:euler_systems, kolyvagin:structure_of_selmer, kolyvagin:structureofsha} (see also \cite{gross:kolyvagin}) implies that $\text{rk}_{\Z} E(\Q) = 1$ and $\# \Sha(E/\Q) < \infty$. In particular, part (a) of Conjecture \ref{conj:bsd} holds for $E$.
In this paper we prove that the $p$-part of the BSD formula \eqref{eq:bsd-formula} holds for $E$:

\begin{thm}[$p$-part of the Birch and Swinnerton-Dyer formula]\label{thm:main} 
If $p\geq 5$, then 
\begin{equation}\label{eq:bsd}
\ord_p \left (\frac{L'(E,1)}{\Reg(E/\Q)\cdot \Omega_{E}} \right ) = \ord_p \left (\#\Sha(E/\Q)\prod_{\ell\nmid \infty} c_\ell(E/\Q) \right ). 
\end{equation}
If $p = 3$, then \eqref{eq:bsd} holds provided $a_p(E)=0$ when $E$ has supersingular reduction at~$p$. 
\end{thm}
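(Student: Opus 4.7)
The plan is to pass to an auxiliary imaginary quadratic field $K$ satisfying the Heegner hypothesis with respect to $N$, chosen so that $L(E^K,1)\neq 0$ (where $E^K$ is the quadratic twist) and so that additional local conditions needed below are met. Such a $K$ exists by results of Bump--Friedberg--Hoffstein and Murty--Murty. Over $K$ one has the factorization $L(E/K,s) = L(E,s)\,L(E^K,s)$, hence $L'(E/K,1) = L'(E,1)\,L(E^K,1)$ since $L(E,1)=0$. The Gross--Zagier formula then relates $L'(E/K,1)$ to the N\'eron--Tate height of a Heegner point $y_K\in E(K)$. Because $E^K$ has analytic rank zero, the $p$-part of BSD for $E^K$ is known (from the work on the cyclotomic main conjecture by Kato and Skinner--Urban). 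So Theorem~\ref{thm:main} reduces to an analogous $p$-part formula for $E/K$ expressed in terms of the Heegner index $[E(K):\Z y_K]$ and local Tamagawa factors, which must then be shown.

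The formula over $K$ splits into two divisibilities. For the direction bounding $\#\Sha(E/K)[p^\infty]$ from above by the $p$-part of $[E(K):\Z y_K]^2\cdot \prod c_\ell$, I would apply Kolyvagin's Heegner point Euler system in its refined form (including the Tamagawa correction due to Jetchev), using the irreducibility of $\rhobar_{E,p}$ to run the descent. Combined with Gross--Zagier, this yields one inequality of $p$-adic valuations in the BSD formula over $K$.

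The opposite divisibility is the core of the argument and proceeds by Iwasawa theory over the anticyclotomic $\Z_p$-extension $K_\infty/K$. Let $\Lambda$ be the associated Iwasawa algebra and $\Xac$ the Pontryagin dual of the relevant Selmer group. The anticyclotomic main conjecture asserts that the characteristic ideal of $\Xac$ is generated by a suitable $p$-adic $L$-function: the Bertolini--Darmon--Prasanna $p$-adic $L$-function in the ordinary case, and a signed $p$-adic $L$-function in the supersingular case (here the condition $a_p(E)=0$ for $p=3$ lets one use Kobayashi's $\pm$-theory). The divisibility "$\Lambda$-$L$-function divides characteristic ideal" is provided by Bertolini--Darmon and its supersingular analogues, while the opposite divisibility is supplied by the work of Skinner--Urban, Wan, Castella--Wan, and their extensions to multiplicative primes. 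An explicit reciprocity law of $\BDP$ type, combined with the $p$-adic Gross--Zagier formula of Perrin-Riou/Kobayashi, converts the leading term of the $p$-adic $L$-function at the trivial character into a formula involving the formal group logarithm of $y_K$. A control theorem for the anticyclotomic Selmer group then translates this into a lower bound for $\#\Sha(E/K)[p^\infty]$ in terms of the $p$-part of the Heegner index and Tamagawa factors, matching the upper bound obtained from Kolyvagin.

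The main obstacle is establishing the anticyclotomic main conjecture in the required generality: in particular, treating the supersingular case (including $p=3$ under $a_p(E)=0$) with signed Selmer groups, and handling the non-split multiplicative primes introduced by semistability. A further delicate point is matching the explicit constants from the reciprocity laws and $p$-adic height formulas so that the two divisibilities combine into an equality of valuations rather than one-sided bounds. A secondary difficulty is the simultaneous management of the auxiliary field $K$, which must be compatible with Gross--Zagier, Kolyvagin with Tamagawa refinement, the $\BDP$/Kobayashi formulas, and the anticyclotomic main conjecture, forcing careful local choices at ramified, supersingular, and split primes of $K$.
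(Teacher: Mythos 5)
Your overall blueprint — pass to an imaginary quadratic field, use Gross--Zagier to relate $L'(E,1)L(E^K,1)$ to a Heegner index, prove the upper bound via Kolyvagin and the lower bound via anticyclotomic Iwasawa theory, and import the rank-$0$ BSD formula for the twist — is the right skeleton. But two specific steps in your plan, as written, would not go through in the generality of the theorem.

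First, your route to the lower bound passes through ``the $p$-adic Gross--Zagier formula of Perrin-Riou/Kobayashi,'' which expresses the derivative of an anticyclotomic $p$-adic $L$-function as a $p$-adic \emph{height} of the Heegner point. In the ordinary case the non-vanishing of that $p$-adic height is not known, so this step has a genuine gap precisely in the case that is not covered by Kobayashi's work. The paper avoids heights entirely: it uses the Bertolini--Darmon--Prasanna-type $p$-adic $L$-function (in Brooks' Shimura-curve extension), whose value at the trivial character (a point \emph{outside} the interpolation range) is computed directly as the square of the $p$-adic \emph{logarithm} of the Heegner point, up to explicit Euler factors (Proposition~\ref{prop:brooks-Af}). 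That, plugged into the control theorem (Theorem~\ref{thm:control}) and the one divisibility in Wan's main conjecture (together with Burungale's $\mu=0$), gives the lower bound uniformly for ordinary and supersingular primes. As a corollary, you also do not need $\pm$-Selmer groups or signed $p$-adic $L$-functions at all: the $a_p=0$ requirement for $p=3$ enters only at the very last step, via Wan's rank-$0$ BSD formula for the twist $E^{D''}$.

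Second, you propose a single auxiliary field $K$ satisfying the Heegner hypothesis, with Kolyvagin ``including the Tamagawa correction due to Jetchev'' supplying the upper bound. But Jetchev's refinement of Kolyvagin over $X_0(N)$ gives the sharp upper bound only when at most one Tamagawa number is divisible by $p$; if two or more $c_\ell$ are divisible by $p$, the classical Heegner-point Euler system still leaves a gap. The paper closes that gap by passing to a Shimura curve parametrization $X_{N^+,N^-}$ whose factorization $N=N^+N^-$ is chosen so that $p\nmid c_\ell(E/\Q)$ for every $\ell\mid N^+$; the Tamagawa factors at $\ell\mid N^-$ then reappear on the analytic side through the Ribet--Takahashi comparison of modular degrees in the general Gross--Zagier formula. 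Crucially, the conditions forced on the auxiliary field for the upper bound ($\ell\mid N^+$ split, $\ell\mid N^-$ inert, with the chosen $N^+$) conflict with the conditions forced for the lower bound (Wan's theorem requires at least one prime $q\mid N$ non-split, and one also wants a specific $q$ where $\rhobar_{E,p}$ is ramified to be non-split). The paper therefore uses \emph{two different} auxiliary fields $\cK'$ and $\cK''$, with possibly different decompositions $N=N^+N^-$, carrying out the lower-bound argument over $\cK'$ and the upper-bound argument over $\cK''$ separately, and then passing to $\Q$ through the known rank-$0$ BSD inequalities for the respective twists $E^{D'}$ and $E^{D''}$. A single field is not enough.
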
 

\noindent It is a consequence of the Gross--Zagier formula that $\frac{L'(E/\Q,1)}{\Reg(E/\Q)\cdot \Omega_{E}} \in \Q^\times$ 
(see \cite[Thm.7.3]{gross-zagier}), so the $p$-adic valuation of the left-hand side of \eqref{eq:bsd} makes sense.

Particular cases of Theorem~\ref{thm:main} have been obtained
by Zhang \cite{zhang:selheeg} and by Berti, Bertolini, and Venerucci \cite{berti-bertolini-venerucci}, 
for $p\geq 5$ a prime of good ordinary reduction. In particular, \cite[Thm.~7.3]{zhang:selheeg} also has the extra assumption that for any $\ell \mid\mid N$ for which $\ell \equiv \pm 1 \bmod p$, $\rhobar_{E, p}$ is ramified at $\ell$ (equivalently,
that $p\nmid c_\ell(E/\Q)$), and   
\cite[Thm.~A]{berti-bertolini-venerucci} also assumes that $p$ is not anomalous (that is, $p\nmid\#E(\F_p)$)
and that $p$ does not divide any of the Tamagawa factors $c_\ell(E/\Q)$.  
In contrast, Theorem~\ref{thm:main} is general  -- including both the ordinary and supersingular cases, as well as the cases where there are Tamagawa numbers divisible by $p$ -- 
aside from the extra hypothesis that $a_p(E)=0$ when $E$ has supersingular reduction at $p$ if $p = 3$.

If $p$ is a prime of supersingular reduction for $E$ and $a_p(E)=0$ (which is always the case for supersingular $p\geq 5$), then the conclusion of Theorem~\ref{thm:main}
follows from combining the work of Kobayashi on the $p$-adic Gross--Zagier formula and the non-vanishing of the $p$-adic height of the Heegner point 
\cite[Thm.~1.1 and Cor.~4.9]{kobayashi:gz} with Wan's recent work on Kobayashi's supersingular variant of the Iwasawa main conjecture for $E$ \cite{wan:rankin-ss}; cf.~\cite[Cor.~1.3]{kobayashi:gz}. This argument does not apply in the case where
$E$ has ordinary reduction at $p$ as the $p$-adic height of the Heegner point is then not known to be non-zero. The proof of Theorem \ref{thm:main} in this paper 
treats the ordinary and supersingular cases the same and also applies to $\GL_2$-type modular abelian varieties. 

If $E$ has complex multiplication (in which case $E$ is not semistable and $N$ is not square-free), the equality \eqref{eq:bsd} similarly follows from the $p$-adic Gross--Zagier formula together with 
the non-vanishing of the $p$-adic height of the Heegner point and the Iwasawa main conjectures for $E$; see \cite[Cor.~1.4]{kobayashi:gz}. 

When $\ord_{s=1}L(E,s) = 0$ (that is, $L(E,1)\neq 0$) then the $p$-part of the Birch and Swinnerton-Dyer conjectural formula is also known. In fact, the equality in this case is an important 
ingredient in our proof of Theorem \ref{thm:main} as well as the proofs of all the results described above. For more on what is known in this case see Theorem~\ref{thm:rank0} below.

\subsection{Outline of the proof}\label{subsec:outline}
Theorem \ref{thm:main} is proved in this paper by separately establishing the upper and lower bounds predicted by \eqref{eq:bsd-formula} for the order $\#\Sha(E/\Q)[p^\infty]$
of the $p$-primary part of the Tate--Shafarevich group.

To prove the exact lower bounds we use anticyclotomic Iwasawa theory. 
For a suitable imaginary quadratic field $\cK' = \Q(\sqrt{D'})$ of discriminant $D' < 0$, using arguments similar to \cite[\S 4]{greenberg:iwasawa-elliptic} we prove a control theorem, Theorem~\ref{thm:control}, that compares a specialization of a certain $\Lambda$-cotorsion Selmer group (here $\Lambda = \Z_p[[\Gal(\cK'_\infty / \cK')]]$ is the Iwasawa algebra
for the anticyclotomic $\Z_p$-extension $\cK'_\infty$ of $\cK'$)
to a certain $\Z_p$-cotorsion Selmer group that is closely related to the $p$-primary part $\Sha(E/\cK')[p^\infty]$ of the Tate--Shafarevich group.
This comparison is stated as an explicit formula involving the Tamagawa numbers of $E$ at primes that split in $\cK'$. 
The $\Lambda$-cotorsion Selmer group is the Selmer group related via an 
anticyclotomic main conjecture to a $p$-adic $L$-function recently constructed for 
modular curves by Bertolini, Darmon and Prasanna \cite{bertolini-darmon-prasanna} 
and extended to Shimura curves by Brooks \cite{brooks:shimura} and Liu, Zhang, and 
Zhang \cite{liu-zhang-zhang}. This $p$-adic $L$-function is also a specialization of a 
two-variable $p$-adic $L$-function constructed by Hida, as explained in 
Section~\ref{bsd-padicLfunc}. We note that the anticyclotomic main conjecture that we 
use is different from the classical anticyclotomic main conjecture of Perrin-Riou formulated in 
\cite{perrin-riou:mc} (see also \cite[p.3]{howard:heeg}): the $p$-adic $L$-functions in the 
two cases have different ranges of interpolation. Using an extension of the methods of \cite{skinner-urban:gl2}, Wan \cite{wan:rankin, wan:rankin-ss} has proved that this 
$p$-adic $L$-function divides the characteristic ideal of the $\Lambda$-cotorsion Selmer group up to a power of $p$ (see Section~\ref{bsd-padicLfunc} for the definitions of the relevant $p$-adic $L$-functions and Section~\ref{bsd-acmc} for the precise statement of the main conjecture). 
To extend this to an unambiguous divisibility in $\Lambda$ and to pass from this divisibility to lower bounds on $\#\Sha(E/\cK')[p^\infty]$ in terms
of the index of a suitable Heegner point, we need two key ingredients: 1) a recent result of Burungale \cite{burungale:mu2} on the vanishing of the corresponding analytic $\mu$-invariant, and 2) a central value formula due to Brooks \cite[Thm.1.1]{brooks:shimura} generalizing a recent formula of Bertolini, Darmon and Prasanna relating the $p$-adic $L$-function at a point outside of the range of interpolation to the $p$-adic logarithm of a Heegner point $z_{\cK'}\in E(\cK')$. The result is the inequality
\begin{equation}\label{bsd-inequality}
\ord_p(\#\Sha(E/\cK')[p^\infty])\geq
\ord_p\left([E(\cK'):\Z\cdot z_{\cK'}]^2\prod_{{w\mid N}\atop{\text{$w$ a split prime of $\cK'$}}} c_w(E/\cK')\right)
\end{equation}
The Heegner point $z_{\cK'}$ appearing in this formula (and also appearing in the formula of Brooks) 
comes from a parameterization of $E$ by a Shimura curve $X_{N^+, N^-}$, $N^+ N^- = N$, of level $N^+$ attached to a quaternion algebra 
$B = B_{N^-}$ of discriminant $N^-$. In order to appeal to the known results about the anticyclotomic main conjecture of interest, 
it is necessary to take $N^->1$ (so this is not the classical Heegner point setting).  To pass from the inequality \eqref{bsd-inequality}
to one where the right-hand side is replaced with an $L$-value, we combine the inequality with the general 
Gross--Zagier formula
for the point $z_\cK'$, due to Zhang \cite{zhang:gz-shimura} and Yuan, Zhang, and Zhang \cite{yuanzhangzhang:gz}.  The result is the exact lower bound
for $\#\Sha(E/\cK')[p^\infty]$ predicted by the BSD formula for $E/\cK'$ (see Conjecture \ref{conj:bsd-gen}).
We note that the Tamagawa numbers at the non-split primes of $\cK'$ now appear, coming into the Gross--Zagier formula as the ratio of the degree of 
the usual modular paramaterization and the degree of the Shimura curve parameterization (this is essentially due to Ribet and Takahashi \cite{ribet-takahashi}).
Finally, to obtain the expected lower bound on $\#\Sha(E/\Q)[p^\infty]$ we express both sides of the resulting inequality for $\#\Sha(E/\cK')[p^\infty]$
in terms of $E/\Q$ and its $\cK'$-twist $E^{D'}/\Q$. We then exploit the fact that Kato has proved that the predicted {\it upper bound} holds
for $\#\Sha(E^{D'}/\Q)[p^\infty]$ (as $\cK'$ is chosen so that $L(E^{D'},1)\neq 0$).
 
Upper bounds on $\#\Sha(E/\Q)[p^\infty]$ are typically achieved by an Euler system argument via the method developed by Kolyvagin 
\cite{kolyvagin:euler_systems}, which applies since $E/\Q$ is assumed to have analytic rank one. 
Kolyvagin's method uses the Euler system constructed from Heegner points that are the images of CM moduli via the usual modular parameterization $X_0(N)\ra E$.
As explained in \cite{jetchev:tamagawa}, this 
has a drawback in the sense that it will only give the precise upper bounds on the $p$-primary part of the Tate-Shafarevich group (for $E$ over
a suitable imaginary quadratic field) if at most one Tamagawa number of $E$ is divisible by $p$. To get around this problem we again consider a parameterization of $E$ by a general Shimura curve $X_{N^+,N^-}$, $N^+N^-=N$.
As explained in Section~\ref{subsubsec:upper}, it is possible to choose this parametrization so that no Tamagawa number at a prime 
dividing $N^+$ is divisible by $p$. We then choose an imaginary quadratic field $\cK'' = \Q(\sqrt{D''})$, $D''<0$, such that each prime
dividing $N^+$ splits in $\cK''$ and each prime dividing $N^-$ is inert in $\cK''$ (this will generally be a different field than 
the $\cK'$ used to establish the lower bound). Kolyvagin's method applied to the Heegner points obtained from 
the parameterization by $X_{N^+, N^-}$ and the field $\cK''$ then yields an inequality in the direction opposite of \eqref{bsd-inequality}:
\begin{equation}\label{eq:bsd-inequality 2}
\ord_p(\#\Sha(E/\cK'')[p^\infty])\leq
\ord_p\left([E(\cK''):\Z\cdot z_{\cK''}]^2\right). 
\end{equation}
Appealing to the general Gross--Zagier formula for $z_{\cK''}$ we then get the upper bound on $\#\Sha(E/\cK'')[p^\infty]$ predicted
by the BSD formula for $E/\cK''$. To pass from this to the expected upper bound for $\#\Sha(E/\Q)[p^\infty]$ we make use of the fact
that the predicted {\it lower bound} for $\#\Sha(E^{D''}/\Q)[p^\infty]$ is known.
This lower bound follows from the proved cases of the cyclotomic main conjectures for $E^{D''}$ (see \cite[Thm.2(a)]{skinner-urban:gl2} and \cite[Thm.C]{skinner:multred}
for the ordinary case and \cite[Thm.1.3]{wan:kobayashi} for the supersingular case).

It is only at the final step, where we invoke the $p$-part of the BSD formula for $L(E^{D''},1)$, that we need to assume that $a_p(E)=0$ 
if $E$ has supersingular reduction at $p$, which is only a real condition when $p=3$. Furthermore, most of our arguments apply
more generally to the situation where $E$ is replaced by a newform of weight 2, square-free level, and trivial character (see also Section \ref{subsubsec:remarks} for additional comments on the general case).

\subsection{Organization of the paper}
In section~\ref{bsd-prelim} we recall some relevant background on Galois representations, local conditions and Selmer modules (generalizations of classical Selmer groups),
as well as the specific cases arising from newforms and modular abelian varieties. 
In Section~\ref{bsd-control} we prove the control theorems for the relevant anticyclotomic Selmer groups. In Section~\ref{bsd-cmpts} we include the relevant background on quaternion algebras, Shimura curves, CM points, and the Kolyvagin system coming from a 
Shimura curve, and we recall the upper bounds on $\#\Sha(E/\cK)[p^\infty]$ obtained from Kolyvagin's argument in the setting of Shimura curves. 
Section~\ref{bsd-padicLfunc} is about $p$-adic $L$-functions and various comparisons. In it we recall the $p$-adic anticyclotomic 
$L$-function constructed by Brooks \cite{brooks:shimura} and compare it to a specialization of Hida's two variable 
$p$-adic $L$-function \cite{hida:padic2}. Section~\ref{bsd-padicLfunc} also includes the statement of Burungale's result on the vanishing of the analytic 
anticyclotomic $\mu$-invariants
and the statement of Brooks' result expressing a certain value of the anticyclotomic $p$-adic $L$-function in terms of a $p$-adic logarithm of a 
Heegner point. In Section~\ref{bsd-acmc} we discuss the relevant anticyclotomic main 
conjectures and recent progress on proving them. 
We complete the proof of our main result, Theorem~\ref{thm:main}, in Section~\ref{bsd-mainthm}.

%
%
\section{Preliminaries}\label{bsd-prelim}
Let $\bQ\subset\C$ be the algebraic closure of $\Q$. For any number field $F \subset\bQ$, let $G_F = \Gal(\bQ/F)$. For each place $v$ of $F$, fix an algebraic closure $\ov F_v$ of $F_v$ and an $F$-embedding $\iota_v \colon \bQ \hookrightarrow \ov F_v$ to get an identification of  $G_{F_v} = \Gal(\ov F_v/F_v)$ with a
decomposition group for $v$ in $G_F$. For each finite place $v$, let $I_v\subset G_{F_v}$ be the inertia subgroup and
$\Frob_v \in G_{F_v}/I_v$ a geometric\footnote{Throughout this paper we take geometric normalizations (e.g., for Frobenius elements, for the reciprocity maps of class field theory, for Hodge--Tate weights, for $L$-functions of Galois representations).}
Frobenius element. Let $\F_v$ be the residue field of $v$ and $\overline\F_v$ an algebraic closure of $\F_v$. 
Then there is a canonical identification $G_{F_v}/I_{v}= \Gal(\overline \F_v/\F_v)$.

Throughout, let $p\geq 3$ be a fixed prime and let $\eps \colon G_\Q\rightarrow \Z_p^\times$ be the $p$-adic cyclotomic character. 

\subsection{Galois representations}\label{subsec:galoisreps}
Let $F$ be a number field (for much of this paper $F$ will be either $\Q$ or an imaginary quadratic field $\cK$).
By a $p$-adic Galois representation of $G_F$ we will always mean a finite-dimensional vector space $V$ over a finite extension
$L/\Q_p$ that is equipped with a continuous $L$-linear $G_F$-action. Such a representation
$V$ will be understood to come with a scalar field $L$.  We will always assume that 
\begin{equation}\label{geom}
\text{$V$ is geometric}
\tag{{geom}}
\end{equation}
in the sense introduced by Fontaine and Mazur \cite{fontainemazur:geom}: $V$ is unramified away from a finite set of 
places and potentially semistable at all places $w\mid p$ of $F$. We will further assume that 
\begin{equation}\label{pure}
\text{$V$ is pure.}
\tag{{pure}}
\end{equation}
In particular, this means that there is some integer $m$ such that for any finite place $w$ of 
$F$ at which $V$ is unramified, all the eigenvalues of $\Frob_w$ are Weil numbers
of absolute value $(\# \F_w)^{m/2}$. More generally, \eqref{pure} means that for all finite places $w$,  the Frobenius semi-simplification of the Weil--Deligne representation $WD_w(V)$ 
associated\footnote{For $w\mid p$
this was defined by Fontaine via $p$-adic Hodge theory:
Let
$$D_{pst}(V) = \bigcup_{E/F_w} (V\otimes B_{st})^{G_E},$$
with $E$ running over
all finite extensions of $F_w$ and $B_{st}$ being Fontaine's ring of semistable $p$-adic periods.
This is a free $L\otimes_\Qp F_w^{\ur}$-module of rank two with an induced action of the monodromy operator
$N$ and Frobenius $\varphi$ of $B_{st}$.
The Weil-Deligne representation associated to $V|_{G_{F_w}}$ by Fontaine is
$WD_w(V) = D_{pst}(V)\otimes_{L\otimes_\Qp F_w^{\ur}} \bQp$ (chose any embedding $F_w^\ur\hookrightarrow \bQp$)
with the induced action of $N$. The action of the Weil group $W_{F_w}\subset G_{F_w}$ is defined
by twisting its $L$-linear, $F_{w}^\ur$-semilinear action $r_{sl}$ on $D_{pst}(V)$. An element $g\in W_{F_w}$ acts on $WD_w(V)$
as $r_{sl}(g)\varphi^{\nu(g)}$, where $\nu:W_{F_w} \rightarrow \Z$ is the normalized valuation map.}
with $V|_{G_{F_w}}$ is pure of weight $m$ in the sense defined by Taylor and Yoshida \cite{tayloryoshida:local-global}.

Let $\cO\subset L$ be the ring of integers of $L$. We will generally choose a $G_F$-stable $\cO$-lattice $T\subset V$
and let $W = V/T$. The latter is a discrete $\cO$-divisible $G_F$-module; it is canonically identified with 
$T\otimes_\cO L/\cO$. Note that the isomorphism class of $T$,  and hence also that of $W$, is not necessarily
uniquely determined. In what follows we will often fix such a triple $(V,T,W)$.

Let $\grm\subset \cO$ be the maximal ideal and $\kap = \cO/\grm$ the residue field. Let $\overline{V}$ be the 
semi-simplification of the finite-dimensional $\kap$-representation $T/\grm T$. Then $\overline{V}$ is uniquely-determined
up to isomorphism; it is independent of the choice of $T$. Furthermore, if $\overline{V}$ is irreducible, then all lattices
$T\subset V$ are homothetic (and so the isomorphism of class of $T$ is unique). Note that while our definition of $\overline{V}$
commutes with extension of the scalars $\kap$, the property of being irreducible may not. 

\subsection{Local conditions}
Let $F$ be a number field and let $(V,T,W)$ be as in Section~\ref{subsec:galoisreps}. 
Let $M$ be an $\cO$-module with a continuous $G_F$-action. 
By a local condition for $M$ at a place $w$ of $F$ we mean a subgroup of the local cohomology group $\H^1(F_w, M)$. 
We discuss several local conditions that will be used throughout. 

\subsubsection{The unramified local condition}
For a finite place $w$ of $F$, the unramified local condition is defined as 
$$
\H^1_{\ur}(F_w, M) = \ker \{ \H^1(F_w,  M) \ra \H^1(I_w,  M)\},
$$ 
where $I_w \subset G_{F_w}$ is the inertia group at $w$. Note that we also have 
$$
\H^1_{\ur}(F_w,M) = \H^1(\F_w,M^{I_w}).
$$
  
\subsubsection{The finite local condition} \label{subsubsec:finite}
Following Bloch and Kato \cite{bloch-kato}, for a finite place $w$ of $F$ we define the finite local condition for $V$ to be
$$
\H^1_{f}(F_w, V) = 
\begin{cases}
\H_{\ur}^1(F_w, V) & w \nmid p \infty \\
\ker \{ \H^1(F_w, V) \ra \H^1(F_w, V \otimes_{\Q_p} B_{\cris}) \} & w \mid p \\ 
0 & w \mid \infty.
\end{cases}
$$

The finite local condition for $V$ can be propagated to $T$ and $W$ via the exact sequence $0 \ra T \ra V \ra W \ra 0$. The resulting local conditions are
$$
\H^1_{f}(F_w, T) = \text{preimage of $\H^1_{f}(F_w, V)$ under the map $\H^1(F_w, T) \ra \H^1(F_w, V)$}
$$
and 
$$
\H^1_{f}(F_w, W) = \text{the image of $\H^1_{f}(F_w, V)$ under the map $\H^1(F_w, V) \ra \H^1(F_v, W).$}
$$  
Note that for $w\nmid p$ we have 
$$
\H^1_{\ur}(F_w,T) \subset \H^1_f(F_w,T) \ \ \ \text{and} \ \ \ \H^1_f(F_w,W)\subset H^1_{\ur}(F_w,W)
$$
and that neither inclusion need be an equality.

We note for later use that if $\dim_L V = 2$ and $V$ is pure of weight different from~0 and~1, then for $w\nmid p$, $\H^1_f(F_w,V) = \H^1_\ur(F_w,V) = 0$, and so
$\H^1_f(F_w,W)=0$ and $\H^1_f(F_w,T)=\H^1_f(F_w,T)_\tor$. In fact, in this case we have $\H^0(F_w,V) = 0 = \H^0(F_w,V^\vee(1)) = 0$,
so $\H^1(F_w,V) = 0$ and hence $\H^1(F_w,T)$ and $\H^1(F_w,W)$ both have finite order.

\subsubsection{The anticyclotomic local condition}\label{subsubsec:ac}
Suppose now that $F = \cK$ is an imaginary quadratic field. Suppose also that $p$ splits 
in $\cK$ as $p = v \vbar$. We define the anticyclotomic local condition for $V$ as 
$$
\H^1_{\ac}(\cK_w, V) = 
\begin{cases}
\H^1(\cK_{\vbar}, V) & \text{ if } w = \vbar, \\ 
\H^1_{f}(\cK_w, V) & \text{ if } w \nmid p\infty \text{ is split in } \cK, \\
0 & \text{ else.}  
\end{cases}
$$  
Note that this definition involves the choice of a prime $v$ of $\cK$ above $p$.

We propagate the anticyclotomic local condition via $0\rightarrow T\rightarrow V\rightarrow W\rightarrow 0$, getting anticyclotomic local conditions for $T$ and $W$.
In particular,
$$
\H^1_{\ac}(\cK_w, W) = 
\begin{cases}
\H^1(\cK_{\vbar}, W)_{\div} & \text{ if } w = \vbar, \\ 
\H^1_{f}(\cK_w, W) & \text{ if } w \nmid p\infty \text{ is split in } \cK, \\
0 & \text{ else.}  
\end{cases}
$$  

As noted in Section~\ref{subsubsec:finite}, if $\dim_LV=2$ and $V$ is pure of weight different from $0$ or $1$, then 
the conditions at $w\nmid p\infty$ agree in the split and non-split cases (the local condition is just $0$).

\subsection{Selmer structures and Selmer modules}
Following \cite[Ch.2]{mazur-rubin:kolyvagin_systems},  a {Selmer structure} $\cF$ on $M$ is a choice 
of a local condition $\H^1_{\cF}(F_w, M) \subseteq \H^1(F_w, M)$ for each place $w$ of $F$ such that for all but finitely many $w$, $\H^1_{\cF}(F_w, M) = \H^1_{\ur}(F_w, M)$. 
A Selmer structure $\cF$ on $M$ has an associated {Selmer module} defined as 
$$
\H^1_{\cF}(F, M) := \ker \left \{\H^1(F, M) \ra \bigoplus_{w}\H^1(F_w, M) / 
\H^1_{\cF}(F_w, M) \right \}, 
$$
where the sum is taken over all places $w$ of $F$. 

If $\cF$ is a Selmer structure on $M$, then we define the dual Selmer structure $\cF^*$ on 
$$
M^* = \Hom_{\cont}(M,\Q_p/\Zp(1)),
$$
the arithmetic dual of $M$, as
$$
\H^1_{\cF^*}(F_w,M^*) = \text{ the annihilator of $\H^1_{\cF}(F_w,M)$ via local duality.}
$$

For the purpose of this paper, if $S$ and $S'$ are two finite sets of finite places of $\cK$ for which $S \cap S' = \emptyset$ then 
$\cF^{S'}_S$ will denote the Selmer structure obtained from $\cF$ by replacing the local conditions at the places in $S$ with the trivial local conditions and the local conditions at the places in $S'$ with the relaxed local conditions (i.e., $\H^1_{\cF^{S'}_S}(\cK_w, M) = \H^1(\cK_w, M)$ for $w \in S'$ and $\H^1_{\cF^{S'}_S}(\cK_w, M) = 0$ for $w \in S$).

\subsubsection{A consequence of Poitou-Tate duality}
If $\cF$ and $\cG$ are 
two Selmer structures on $M$, we write $\cF \preceq \cG$ 
if $\H^1_{\cF}(F_w, M) \subseteq \H^1_{\cG}(F_w, M)$ for every place $w$ of 
$F$. If $\cF \preceq \cG$, there is a perfect bilinear pairing 
$$
\frac{\H^1_{\cG}(F_w, M)}{\H^1_{\cF}(F_w, M)} \times 
\frac{\H^1_{\cF^*}(F_w, M^*)}{\H^1_{\cG^*}(F_w, M^*)} \ra \Q /\Z
$$
that is induced from the Tate local pairing. 

The following theorem is a consequence of the Poitou--Tate global duality theorem (see~\cite[Thm.1.7.3]{rubin:book}, \cite[Thm.I.4.10]{milne:duality} 
and~\cite[Thm.3.1]{tate:duality}):  

\begin{thm}\label{thm:poitou-tate}
Let $\cF \preceq \cG$ be two Selmer structures on $M$ and consider the exact 
sequences 
$$
0 \ra \H^1_{\cF}(F, M) \hra \H^1_{\cG}(F, M) \xra{\loc^{\cG}_{\cF}} \left ( \bigoplus_{w} 
 \frac{\H^1_{\cG}(F_w, M)}{\H^1_{\cF}(F_w, M)} \right )
$$
and 
$$
0 \ra \H^1_{\cG^*}(F, M^*) \hra \H^1_{\cF^*}(F, M^*) \xra{\loc^{\cF^*}_{\cG^*}} \left (  \bigoplus_{w} 
\frac{\H^1_{\cF^*}(F_w, M^*)}{\H^1_{\cG^*}(F_w, M^*)} \right ),  
$$ 
where $\loc^{\cG}_{\cF}$ and 
$\loc^{\cF^*}_{\cG^*}$ are the natural restriction maps 
and the sum is over all places $w$, for which $\H^1_{\cF}(F_w, M) \subsetneq 
\H^1_{\cG}(F_w, M)$. The images of 
$\loc^{\cG}_{\cF}$ and $\loc^{\cF^*}_{\cG^*}$ are orthogonal complements with respect to the pairing
$\ds \sum_w \langle -, -\rangle_w$ obtained from the local Tate pairings on the local cohomology groups.     
\end{thm}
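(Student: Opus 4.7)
The plan is to reduce the theorem to the classical nine-term Poitou--Tate global duality sequence. The two displayed exact sequences are essentially tautological: by definition, $\H^1_\cF(F,M)$ is the kernel of the restriction map $\H^1(F,M)\to\bigoplus_{w}\H^1(F_w,M)/\H^1_\cF(F_w,M)$, and since $\H^1_\cF(F_w,M)\subseteq\H^1_\cG(F_w,M)$ at every place $w$, we may equivalently replace the source by $\H^1_\cG(F,M)$ and the target by the quotients $\H^1_\cG(F_w,M)/\H^1_\cF(F_w,M)$ without changing the kernel. The same reasoning, applied to $M^*$ with $\cG^*\preceq\cF^*$, gives the dual sequence. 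Thus the substance of the theorem is the orthogonal-complement assertion.

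First I would establish the easy inclusion $\im(\loc^\cG_\cF)\subseteq\im(\loc^{\cF^*}_{\cG^*})^\perp$. Fix $x\in\H^1_\cG(F,M)$ and $y\in\H^1_{\cF^*}(F,M^*)$. At any place $w$ where $\H^1_\cF(F_w,M)=\H^1_\cG(F_w,M)$, the local pairing $\langle\loc_w(x),\loc_w(y)\rangle_w$ vanishes, because $\loc_w(y)$ lies in the annihilator of $\H^1_\cF(F_w,M)=\H^1_\cG(F_w,M)$ by the definition of $\cF^*$. Since both $\cF$ and $\cG$ agree with the unramified condition away from a finite set of places, only finitely many local pairings are potentially nonzero, and their total sum $\sum_w\langle\loc_w(x),\loc_w(y)\rangle_w$ vanishes by the global reciprocity law (sum of local invariants equals zero on a pair of global cohomology classes). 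The symmetric inclusion $\im(\loc^{\cF^*}_{\cG^*})\subseteq\im(\loc^\cG_\cF)^\perp$ is identical.

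The harder direction---that these containments are in fact equalities---requires the full Poitou--Tate nine-term sequence. Fix a finite set $\Sigma$ of places of $F$ containing all archimedean places, all places above $p$, all places at which $M$ is ramified, and all $w$ at which $\H^1_\cF(F_w,M)\neq\H^1_\cG(F_w,M)$. Working with $G_{F,\Sigma}$-cohomology, the relevant segment of Poitou--Tate produces the exact sequence
$$
\H^1_\cG(F,M)\xra{\loc^\cG_\cF}\bigoplus_{w\in\Sigma}\frac{\H^1_\cG(F_w,M)}{\H^1_\cF(F_w,M)}\ra\Hom\!\left(\frac{\H^1_{\cF^*}(F,M^*)}{\H^1_{\cG^*}(F,M^*)},\Q/\Z\right)\ra 0,
$$
where the second arrow is induced by pairing against $\H^1_{\cF^*}(F,M^*)$ using the local Tate pairings at the places of $\Sigma$. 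Exactness at the middle term, combined with the perfectness of the local Tate pairings on each $\H^1_\cG(F_w,M)/\H^1_\cF(F_w,M)$ against $\H^1_{\cF^*}(F_w,M^*)/\H^1_{\cG^*}(F_w,M^*)$, is precisely the assertion that $\im(\loc^\cG_\cF)$ is the full orthogonal complement of $\im(\loc^{\cF^*}_{\cG^*})$; the symmetric assertion then follows by applying the same argument to the pair $\cG^*\preceq\cF^*$.

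The main obstacle is purely one of bookkeeping: verifying that the auxiliary set $\Sigma$ captures all nontrivial local contributions, that the cohomology of $G_{F,\Sigma}$ agrees with that of $G_F$ on the relevant Selmer modules, and that the connecting map from the nine-term sequence coincides with the pairing against $\H^1_{\cF^*}(F,M^*)$ via local Tate duality. None of these steps is conceptually deep, and the full argument is carried out in \cite[Thm.~1.7.3]{rubin:book} and \cite[Thm.~I.4.10]{milne:duality}.
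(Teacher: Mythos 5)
The paper offers no proof of this theorem; it is stated as "a consequence of the Poitou--Tate global duality theorem" with citations to Rubin, Milne, and Tate, which is precisely where your argument also lands. Your proposal is correct and follows the same route, with the added value of spelling out the tautological exactness of the two sequences and the easy orthogonality inclusion via global reciprocity before deferring the exactness-at-the-middle assertion to the nine-term sequence.
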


\subsubsection{The finite and anticyclotomic Selmer structures}\label{subsubsec:selstruct}
Let $(T,V,W)$ be as in Section \ref{subsec:galoisreps} and let $M$ be one of $T$, $V$, or $W$. The finite (or Bloch-Kato) Selmer structure $\cF_{\BK}$ is defined by the finite local conditions
$$
\H^1_{\cF_{\BK}}(F_w,M) = \H^1_f(F_w,M).
$$
Note that $\cF_{\BK}^*$ is just the finite Selmer structure on $M^*$. If $F=\cK$ is an imaginary quadratic field in which $p$ splits as $p = v\bar v$, then the anticyclotomic Selmer structure $\SelMin$ is defined by the anticyclotomic local conditions
$$
\H^1_{\SelMin}(\cK_w,M) = \H^1_\ac(\cK_w,M).
$$

\subsubsection{Iwasawa-theoretic Selmer structures.}\label{subsubsec:iwasawa-selstruct}
Let $\cK$ be an imaginary quadratic field and let $F_\infty$ denote either the anticyclotomic $\Z_p$-extension or the $\Z_p^2$-extension of $\cK$. 
As before, we assume that $p$ splits in $\cK$, i.e., $p = v \vbar$. Let $R = \cO[[\Gal(F_\infty / \cK)]]$ be the associated Iwasawa algebra
and consider the $R$-module $M = T \otimes_\cO \widehat{R}$ where 
$\widehat{R} = \Hom_{\cO,\cont}(R, L/\cO)$. The module $M$ is equipped with a $G_{\cK}$-action given by $\rho \otimes \Psi^{-1}$ where $\Psi \colon G_{\cK} \ra R^\times$ is the character naturally 
defined by the projection $G_{\cK} \ra \Gal(F_\infty / \cK)$.  

We define two Selmer structures $\SelMin$ and $\SelGr$ on $M$ that we refer to as the anticyclotomic and the Greenberg Selmer structures, respectively. The anticyclotomic Selmer 
structure $\H^{1}_{\SelMin}(\cK_w, M)$ is defined by
\begin{equation*}
\H^{1}_{\SelMin}(\cK_w, M) = 
\begin{cases}
\H^1(\cK_{\vbar}, M) & \text{ if } w = \vbar \\ 
\H^1_{\ur}(\cK_w, M) & \text{ if } w \nmid p\infty \text{ is split}, \\
0 & \text{ else}.  
\end{cases}
\end{equation*}
The Greenberg Selmer structure is defined by
\begin{equation*}
\H^{1}_{\SelGr}(\cK_w, M) = 
\begin{cases}
\H^1(\cK_{\vbar}, M) & \text{ if } w = \bar v, \\
\H^1_{\ur}(\cK_w, M) & \text{ else}.   
\end{cases}
\end{equation*}

\begin{rmk} When $F_\infty$ is the anticyclotomic $\Zp$-extension of $\cK$, 
the anticyclotomic Selmer structure gives rise to a global Selmer module 
$\H^1_{\SelMin}(\cK, M)$ whose Pontrjagin dual $X_{\ac}(M) = \Hom_{\cO} (\H^1_{\SelMin}(\cK, M), L / \cO)$ appears in the statement of the anticyclotomic main conjecture that will be relevant to our argument (see Section~\ref{subsec:mainconj} for more details).
\end{rmk}

\begin{rmk}
For the particular modules $M=T\otimes_\cO \widehat{R}$ that we will consider later, the $G_\cK$-module $T\otimes_\cO R$ can be viewed as 
a $p$-adic family of motivic $p$-adic Galois representations. Moreover, the inductions of these representations to $G_\Q$ satisfy
the Panchishkin condition at $p$ (see \cite[p.~211]{greenberg:defmotives}). In this case, the Greenberg Selmer condition above can be identified with a special case of the Selmer conditions
defined by Greenberg \cite{greenberg:defmotives}. When $F_\infty$ is the $\Z_p^2$-extension of $\cK$, the Iwasawa--Greenberg conjecture \cite[Conj.~4.1]{greenberg:defmotives}
relates the characteristic ideal of $X_{\Gr}(M) = \Hom_{\cO}(\H^1_{\Gr}(\cK,M),L/\cO)$ to the two-variable $p$-adic $L$-function constructed by Hida \cite{hida:padic2} (see Section~\ref{subsec:twovarpadicLfunc} for details). 
\end{rmk}

\subsection{Newforms, their Galois representations, and modular abelian varieties}\label{subsec:new-galrep-modabvar}
We introduce notation for and recall basic properties of newforms and their associated $p$-adic Galois representations.

\subsubsection{Newforms}\label{subsubsec:newforms}
Let $f\in S_{2k}(\Gamma_0(N))$ be a normalized newform of weight $2k$, level $N$, and trivial Nebentypus. 
Let $f = \sum_{n=1}^\infty a_n(f) q^n$ be its $q$-expansion at the cusp $\infty$. The Fourier coefficients
$a_n(f)$ generate a finite extension $\Q(f)\subset\C$ of $\Q$. Fix an embedding
$\Q(f)\hookrightarrow \bQp$ and let $L\subset\bQp$ be a finite extension of $\Qp$ containing
the image $\Q(f)$.  Let $\O$ be the ring of integers of $L$.

\subsubsection{Galois representations associated to newforms}\label{subsubsec:modulargalrep}
Associated to $f$, $L$ and the fixed embedding $\Q(f)\hookrightarrow L$ is a
two-dimensional $L$-space $V_f$ with a continuous, absolutely irreducible $L$-linear $G_\Q$-action that is characterized by the equality of $L$-functions\footnote{Following
our convention of using geometric normalizations, the Euler factors of a Galois representation (or a
Weil-Deligne representation) are defined
via the action of geometric Frobenius on inertia invariants; Euler factors at $p$ are defined using the Weil-Deligne
representation associated with $V_f^\vee|_{G_\Qp}$.}:
$$
L(V_f^\vee,s) = L(f,s),
$$
where $V_f^\vee$ is the $L$-dual\footnote{We have adopted the conventions here so if $f$ is associated with
an elliptic curve $E/\Q$, then $V_f$ is just the $\Qp$-Tate module of $E$ (or an extension of scalars thereof).}
of the $G_\Q$-representation $V_f$. This equality of $L$-functions can be refined as follows.
Let $\pi=\otimes_{\ell\leq\infty} \pi_\ell$ be the cuspidal automorphic
representation such that $L(\pi,s-1/2) = L(f,s)$. Then the Frobenius-semisimplification of the
Weil--Deligne representation $WD_\ell(V_f^\vee)$ associated with each local 
Galois representation $V_f^\vee|_{G_\Ql}$ is, after extending scalars from $L$ to $\bQp$ 
and fixing a $\Q(f)$-isomorphism $\C\cong\bQp$, the Weil--Deligne representation 
associated with $\pi_\ell\otimes|\cdot|_{\ell}^{-1/2}$ via the local Langlands correspondence.
An important feature of the Galois representation $V_f$ is that it is geometric and pure 
of weight $1-2k$. 

\subsubsection{Modular abelian varieties.}\label{subsubsec:modabvar}
Associated to a newform $f \in S_2(\Gamma_0(N))$ of weight $2$ is an isogeny class of abelian varieties
whose endomorphism rings contain an order in the ring of integers $\Z(f)$ of $\Q(f)$.
Let $A_f$ be an abelian variety in this isogeny class such that $\Z(f)\hookrightarrow \End_\Q A_f$ (such an $A_f$ always exists). 
The $p$-adic Tate module $T_pA_f$ is a free $\Z(f)\otimes\Zp$-module of rank two.
Let $\grp$ be a prime of $\Z(f)$ containing $p$ and let $T_\grp A_f = T_pA_f\otimes_{\Z(f)\otimes\Zp}\Z(f)_\grp$;
this is the $\grp$-adic Tate-module of $A_f$. Let $V_\grp A_f = T_\grp A_f\otimes_{\Z(f)_\grp} \Q(f)_\grp$. The
quotient $V_\grp A_f/T_\grp A_f\cong T_\grp A_f\otimes_{\Z(f)_\grp}\Q(f)_\grp/\Z(f)_\grp$
is naturally identified with $A_f[\grp^\infty]$; this identifies $A_f[\grp^n]$ with $T_\grp A_f/\grp^nT_\grp A_f$
for each $n\geq 1$. For $L=\Q(f)_\grp$ and $\O=\Z(f)_\grp$, $V_\grp A_f$ is just the $V_f$ of 
Section \ref{subsubsec:modulargalrep}, and 
$(V_\grp A_f, T_\grp A_f, A_f[\grp^\infty])$
is an example of a triple $(V,T,W)$ as in Section \ref{subsec:galoisreps}. 

\subsubsection{Selmer groups of newforms and modular abelian varieties}
Let $f\in S_{2k}(\Gamma_0(N)$ be a newform and $V_f$ an associated $p$-adic Galois representation
as in Section \ref{subsubsec:modulargalrep}.
Let $T_f\subset V_f$ be an $G_\Q$-stable $\cO$-lattice and let $W_f = V_f/T_f$.

For a number field $F$, let $L(V^\vee_f/F,s)$ be the $L$-function of the $G_F$-representation $V^\vee_f$. 
Then $L(V^\vee_f/\Q,s)$ is just the usual $L$-function $L(f,s)$, and more generally $L(V^\vee_f/F,s)$ is the value
at $s-1/2$ of the $L$-function of the formal base change to $\GL_2/F$ of the automorphic representation $\pi$ of 
Section \ref{subsubsec:modulargalrep}. The Bloch--Kato conjectures connect the central value
$L(V^\vee_f/F,k)$ with the order of the Selmer module\footnote{Note the Tate-twist: if $V = V_f(1-k)$, then $L(V^\vee_f/F,s) = L(V^*/F,s-k)$, 
so the central critical value is just $L(V^*,0)$.}
$\H^1_{\cF_{\BK}}(F,W_f(1-k))$.

Suppose that $f$ has weight 2 and $(V_f,T_f,W_f) = (V_\grp A_f, T_\grp A_f, A_f[\grp^\infty])$. Then 
for any number field $F$, $\H^1_{\cF_\BK}(F,W_f)$ is just the usual $\grp$-adic Selmer group of $A_f$:
\begin{equation}
\H^1_{\cF_\BK}(F,W_f) = \Sel_{\grp}(A_f/F) = \Sel_p(A_f/F)\otimes_{\Z(f)\otimes \Zp}\Z(f)_\grp.
\end{equation}
Here $\Sel_p(A_f/F)$ is the usual $p$-adic Selmer group of $A_f/F$ and 
$\Sel_\grp(A_f/F)$ is the usual $\grp$-adic Selmer group of $A_f/F$.

%
%
\section{An Anticyclotomic Control Theorem}\label{bsd-control}
Let $\cK/\Q$ be an imaginary quadratic field such that $p$ splits in $\cK$:
\begin{equation}\label{split}
p = v\bar v. \tag{{split}}
\end{equation}
Let $\tau\in \Gal(\cK/\Q)$ be the nontrivial automorphism.
Let $\cK_\infty$ be the anticyclotomic $\Zp$-extension of $\cK$ and let $\Gamma=\Gal(\cK_\infty/\cK)$.

Let $(V,T,W)$, and so also $L$ and $\cO$, be as in Section \ref{subsec:galoisreps}.
Let $\Lambda = \cO[\![\Gamma]\!]$ and put 
$$
M = T \otimes_{\cO} \widehat{\Lambda}, \ \ \widehat{\Lambda} = \Hom_{\cont}(\Lambda, \Q_p/\Z_p).
$$ 
We equip $M$ with an action of $G_{\cK}$ via $\rho \otimes \Psi^{-1}$ where the projection $\Psi \colon G_\cK \ra \Gamma$ is viewed as a continuous
 $\Lambda^\times$-valued character of $G_\cK$. 
Given a finite set $\Sigma$ of finite places $w \nmid p$, let 
$$
\ds X_{\ac}^\Sigma(M) = \Hom_{\cO} \left (\H^1_{\SelMin^\Sigma}(\cK, M), L / \cO \right ).
$$

We begin this section  by listing a few assumptions on the Galois representation $V$ and the related Selmer modules that will be 
assumed to be in force in all that follows. Then under these assumptions we relate the order of the Selmer module $\H^1_{\SelMin}(\cK, W)$ to the order of the Shafarevich--Tate group 
$\Sha_\BK(W/\cK)$, defined \`a la Bloch--Kato as 
$$
\Sha_{\BK}(W/\cK) = \H^1_{\FBK}(\cK, W) / \H^1_{\FBK}(\cK, W)_{\ddiv}.
$$ 
We then prove a control theorem providing a connection between the order of $\H^1_{\SelMin}(\cK, W)$ and the 
characteristic ideal of $X_{\ac}^\Sigma(M)$. The latter is linked to $p$-adic $L$-functions via the anticyclotomic main conjectures discussed in Section~\ref{bsd-acmc}.
Finally, we deduce some consequences for the Selmer groups associated with modular forms and modular abelian varieties.

\subsection{A few assumptions on $(V,T,W)$}  In addition to \eqref{geom} and \eqref{pure} we will assume that 
\begin{equation}\label{semistable}
\text{$V$ is semistable as a representation of $G_{\cK_w}$ for all $w\mid p$}
\tag{{sst}}
\end{equation}
and that
\begin{equation}\label{cdual}
V^*\cong V^\tau,
\tag{{$\tau$-dual}}
\end{equation}
where $V^\tau$ denotes the representation with the same underlying space as $V$ but with the $G_\cK$-action
composed with conjugation by (a lift of) $\tau$. 
This last hypothesis forces the weight of $V$ to be $-1$ (that is, $V$ is pure of weight $-1$). 
To slightly simplify some arguments we will additionally assume that
\begin{equation}\label{two dim}
\dim_L V = 2,
\tag{{$2$-dim}}
\end{equation}
that
\begin{equation}\label{HT}
\text{no non-zero Hodge-Tate weight of $V$ is $\equiv 0\,(\mathrm{mod}\, {p-1})$},
\tag{{HT}}
\end{equation}
and that
\begin{equation}\label{irredK}
\text{$\overline{V}$ is an irreducible $\kappa$-representation of $G_{\cK}$.}
\tag{{irred}${}_{\cK}$}
\end{equation}
We assume furthermore that
\begin{equation}\label{crk1}
\H^1_{\FBK}(\cK, W)_\div \cong L/\cO  \ \ \text{and} \ \ \H^1_f(\cK_w,W)\cong L/\cO,  \ w\mid p,
\tag{{corank 1}}
\end{equation}
and
\begin{equation}\label{surjp}
\H^1_{\FBK}(\cK, W)_{\div} \stackrel{\loc_w}{\twoheadrightarrow} H^1_f(\cK_w, W), \ w\mid p.
\tag{{sur}}
\end{equation}

\subsection{Relating $\H^1_{\SelMin}(\cK, W)$ to $\Sha_{\BK}(W/\cK)$}
We will prove the following:

\begin{prop} \label{Sel-Sha prop}
One has 
$$
\#\H^1_{\SelMin}(\cK, W) = \#\Sha_\BK(W/\cK)\cdot (\#\delta_v)^2,
$$
where $\delta_v = \coker\{\H^1_{\FBK}(\cK, T)\stackrel{\loc_v}{\rightarrow} \H^1_f(\cK_v,T)/\H^1(\cK_v,T)_{\tor}\}$.
In particular, $\H^1_{\SelMin}(\cK, W)$ has finite order.
\end{prop}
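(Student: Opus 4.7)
The plan is to compute $\#\H^1_{\SelMin}(\cK,W)$ by comparing $\SelMin$ with the Bloch--Kato structure $\FBK$ through an intermediate Selmer structure, applying Poitou--Tate global duality (Theorem~\ref{thm:poitou-tate}) twice and invoking the self-duality forced by \eqref{cdual}.

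First I would observe that \eqref{pure} combined with \eqref{cdual} forces $V$ to have weight $-1$, so the local Euler characteristic formula together with purity gives $\H^1(\cK_w,V)=0$ for every finite $w\nmid p$; hence $\H^1_f(\cK_w,W)=0$ at such $w$ and the Selmer structures $\SelMin$ and $\FBK$ on $W$ agree at every place outside $\{v,\bar v\}$. They differ at $v$ (where $\SelMin$ is trivial while $\FBK$ imposes $\H^1_f(\cK_v,W)\cong L/\cO$) and at $\bar v$ (where $\SelMin$ imposes $\H^1(\cK_{\bar v},W)_{\div}$ while $\FBK$ imposes $\H^1_f(\cK_{\bar v},W)\cong L/\cO$). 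Introduce the intersection structure $\cF_0:=\FBK\cap\SelMin$: trivial at $v$, equal to $\H^1_f$ at $\bar v$, trivial elsewhere. Since $\cF_0\preceq\FBK$ and they differ only at $v$, Theorem~\ref{thm:poitou-tate} combined with \eqref{surjp} gives
\begin{equation*}
0\to\H^1_{\cF_0}(\cK,W)\to\H^1_{\FBK}(\cK,W)\xrightarrow{\loc_v}\H^1_f(\cK_v,W)\to 0,
\end{equation*}
and by \eqref{crk1} the map $\loc_v$ restricts to an isomorphism between the divisible parts, both $\cong L/\cO$. A diagram chase identifies $\H^1_{\cF_0}(\cK,W)$ with $\Sha_{\BK}(W/\cK)$, so $\#\H^1_{\cF_0}(\cK,W)=\#\Sha_{\BK}(W/\cK)$.

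Next, $\cF_0\preceq\SelMin$ differ only at $\bar v$, and Theorem~\ref{thm:poitou-tate} yields
\begin{equation*}
0\to\H^1_{\cF_0}(\cK,W)\to\H^1_{\SelMin}(\cK,W)\xrightarrow{\loc_{\bar v}} Q:=\H^1(\cK_{\bar v},W)_{\div}/\H^1_f(\cK_{\bar v},W)\cong L/\cO,
\end{equation*}
so that $\#\H^1_{\SelMin}(\cK,W) = \#\Sha_{\BK}(W/\cK)\cdot\#\mathrm{im}(\loc_{\bar v})$, and the task reduces to showing $\#\mathrm{im}(\loc_{\bar v})=(\#\delta_v)^2$.

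This last identification is the main obstacle. Local Tate duality at $\bar v$ pairs $Q\cong L/\cO$ perfectly with $\H^1_f(\cK_{\bar v},T^*)/\H^1(\cK_{\bar v},T^*)_{\tor}\cong\cO$, and the orthogonality clause of Theorem~\ref{thm:poitou-tate} identifies the orthogonal complement of $\mathrm{im}(\loc_{\bar v})$ there with the image of the dual global localization $\H^1_{\cF_0^*}(\cK,W^*)\to\H^1_f(\cK_{\bar v},T^*)/\H^1(\cK_{\bar v},T^*)_{\tor}$. By \eqref{cdual}, the isomorphism $T^*\cong T^\tau$ swaps $v$ and $\bar v$ and transports the dual Selmer structure on $W^*$ to the Bloch--Kato structure on $T$; under this transport the dual localization becomes $\loc_v\colon\H^1_{\FBK}(\cK,T)\to\H^1_f(\cK_v,T)/\H^1(\cK_v,T)_{\tor}$, whose cokernel is $\delta_v$ by definition. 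The square $(\#\delta_v)^2$ then arises from combining two contributions of size $\#\delta_v$ --- one from the Poitou--Tate/$\tau$ identification just sketched, and a second from the integral-versus-divisible discrepancy between $T$ and $W$ encoded in the sequence $0\to T\to V\to W\to 0$, these being equal by the $\tau$-symmetry $\#\delta_v=\#\delta_{\bar v}$. A careful diagram chase tracking both contributions yields the stated formula, and in particular shows that $\H^1_{\SelMin}(\cK,W)$ is finite.
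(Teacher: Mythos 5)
Your proposal reproduces the skeleton of the paper's argument — both use a pair of Poitou--Tate comparisons, one at $v$ and one at $\bar v$, and the $\tau$-symmetry from \eqref{cdual} to translate a cokernel from $\bar v$ back to $\delta_v$. Indeed your $\cF_0$ is precisely the paper's structure $\cF_v = (\FBK)_v$. But the accounting goes wrong at the first step in a way that the second step cannot legitimately repair.

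The error is the claim that ``by \eqref{crk1} the map $\loc_v$ restricts to an isomorphism between the divisible parts, both $\cong L/\cO$.'' Surjectivity of an $\cO$-linear map $L/\cO\twoheadrightarrow L/\cO$ does not imply injectivity: any nonzero $a\in\cO$ acts surjectively on $L/\cO$ (divisibility), but multiplication by $a\in\grm$ has kernel $a^{-1}\cO/\cO\neq 0$. So \eqref{crk1} and \eqref{surjp} tell you only that $\H^1_{\FBK}(\cK,W)_{\div}\to\H^1_f(\cK_v,W)$ is a surjection of cofree rank-one $\cO$-modules, not an isomorphism. Its kernel has order exactly $\#\delta_v$ — the paper computes this explicitly by tensoring the sequence $0\to\H^1_{\FBK}(\cK,T)/\tors\to\H^1_f(\cK_v,T)/\H^1(\cK_v,T)_{\tor}\to\delta_v\to 0$ with $L/\cO$. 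Consequently the snake lemma gives $\#\H^1_{\cF_0}(\cK,W)=\#\Sha_{\BK}(W/\cK)\cdot\#\delta_v$, not $\#\Sha_{\BK}(W/\cK)$ as you assert.

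Having lost one factor of $\#\delta_v$ there, you are then forced to manufacture $(\#\delta_v)^2$ from the $\bar v$-comparison, but the Poitou--Tate orthogonality argument at $\bar v$ produces only a single factor: $\#\mathrm{im}(\loc_{\bar v})=\#\coker(\beta)$, and $\coker(\beta)$ is identified via $W^*\cong T^\tau$ with $\delta_v$. The appeal to ``a second contribution from the integral-versus-divisible discrepancy between $T$ and $W$'' and to ``$\#\delta_v=\#\delta_{\bar v}$'' is not a mathematical argument — the duality theorem is already stated at the $(W,W^*)$ level and does not produce an additional $T$-versus-$W$ correction term. The fix is simply to carry the factor $\#\delta_v$ from the first step rather than trying to double it in the second.
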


\begin{proof}
Let $\cF = \cF_{\BK}$. Consider the exact sequence 
\begin{equation}\label{eq:bk-bkv}
0 \ra \H^1_{\cF_v}(\cK, W) \ra \H^1_{\cF}(\cK, W) \ra  \H^1_f(\cK_v, W)
\end{equation}
and the dual exact sequence 
\begin{equation}\label{eq:bk-bkv-dual}
0 \ra \H^1_{\cF}(\cK, T^*) \ra \H^1_{\cF^v}(\cK, T^*) \ra \H^1(\cK_v, T) / \H^1_f(\cK_v, T^*).
\end{equation}
By the assumption \eqref{surjp}, \eqref{eq:bk-bkv} is surjective on the right. It  
then follows from Theorem~\ref{thm:poitou-tate} that the image of the map 
$\H^1_{\cF^v}(\cK, W^*) \ra \H^1(\cK_v, W^*) / \H^1_f(\cK_v, W^*)$ in \eqref{eq:bk-bkv-dual}
is $0$, and so
\begin{equation}\label{eq:h1T=h1vT}
\H^1_{\cF}(\cK, W^*) = \H^1_{\cF^v}(\cK, W^*).
\end{equation}

By \eqref{eq:bk-bkv} and \eqref{surjp} we have a map of short exact sequences

$$
\xymatrix{
0 \ar[r] & \H^1_{\cF_v}(\cK,W)\cap \H^1_\cF(\cK,W)_{\ddiv} \ar[r] \ar[d] & \H^1_\cF(\cK,W)_{\ddiv} \ar[r] \ar[d] & \H^1_f(\cK_v,W) \ar[d]^{=} \ar[r] &  0 \\
0  \ar[r] & \H^1_{\cF_v}(\cK,W) \ar[r] & \H^1_\cF(\cK,W) \ar[r] & \H^1_f(\cK_v,W) \ar[r] & 0 
} 
$$
As $\H^1_{\cF_v}(\cK,W)\cap \H^1_\cF(\cK,W)_{\ddiv} = \ker\{\H^1_\cF(\cK,W)_{\ddiv}\ra \H^1_f(\cK_v,W)\}$,
by applying the snake lemma to the preceding diagram we conclude that 
\begin{equation}\label{eq:v-sha}\begin{split}
\# \H^1_{\cF_v}(\cK, W) & = \# \Sha_{\BK}(W /\cK) \cdot 
\# \ker \{ \H^1_{\cF}(\cK, W)_{\ddiv} \ra \H^1_f(\cK_v, W)\} \\
& = \# \Sha_{\BK}(W /\cK) \cdot \# \delta_v. 
\end{split}
\end{equation}
The last equality follows upon tensoring the short exact sequence
$$
0 \ra \H^1_\cF(\cK,T)/\H^1_\cF(\cK,T)_\tor \ra \H^1_f(\cK_v,T)/\H^1(\cK_v,T)_\tor  \rightarrow \delta_v\rightarrow 0
$$
with $L/\cO$.

It follows from $V$ being pure of weight different from $0$ or $1$ that we have an exact sequence
\begin{equation*}
0 \ra \H^1_{\cF_v}(\cK, W) \ra \H^1_{\SelMin}(\cK, W) \xra{\alpha} \H^1(\cK_{\vbar}, W)_{\ddiv} / \H^1_f(\cK_{\vbar}, W) 
\end{equation*}
(cf.~Section \ref{subsubsec:finite}) with dual exact sequence 
\begin{equation*}
0 \ra \H^1_{(\SelMin)^*}(\cK, W^*) \ra \H^1_{\cF^v}(\cK, W^*) \xra{\beta} \H^1_f(\cK_{\vbar}, W^*) / \H^1(\cK_{\vbar}, W^*)_{\tor}.  
\end{equation*}
We then have 
\begin{equation}\label{eq:ac-bk}
\# \H^1_{\SelMin}(\cK, W) = \# \ker (\alpha) \cdot \# \textrm{im}(\alpha) = \# \ker (\alpha) \cdot \# \coker(\beta),  
\end{equation}
where the second equality follows from Theorem~\ref{thm:poitou-tate}. It then follows from 
\eqref{eq:h1T=h1vT} that 
$$
\coker(\beta) = \coker \{ \H^1_{\cF}(\cK, W^*) \ra \H^1_f(\cK_{\vbar}, W^*) / \H^1(\cK_{\vbar}, W^*)_{\tor}\}.
$$ 
The hypotheses \eqref{cdual} and \eqref{irredK} imply that $W^*\cong T^\tau$,
so conjugating by the automorphism $\tau$ identifies $\coker(\beta)$
with 
$$
\coker\{\H^1_{\cF}(\cK, T) \ra \H^1_f(\cK_{v}, T) / \H^1(\cK_{v}, T)_{\tor}\} = \delta_v.
$$
Hence combining \eqref{eq:v-sha} and \eqref{eq:ac-bk} yields 
\begin{equation*}\label{eq:ac-sha}
\# \H^1_{\SelMin}(\cK, W) = \# \ker (\alpha) \cdot \# \coker{\beta} = \# \H^1_{\cF_v}(\cK, W) \cdot \# \delta_v = 
\# \Sha_{\BK}(W /\cK) \cdot (\# \delta_v)^2. 
\end{equation*}
It follows from \eqref{crk1} and \eqref{surjp} that $\delta_v$ has finite order, whence $\H^1_{\SelMin}(\cK, W)$
has finite order.
\end{proof}

\begin{rmk}\label{rmk:irredK}
(a) Note that neither of the assumptions \eqref{semistable} and \eqref{HT} is used in this proof.
(b) Clearly, \eqref{irredK} is not essential to the proof of the finiteness of $\H^1_{\SelMin}(\cK,W)$.
It is only used to ensure that $W^* \cong T^\tau$, so that $\delta_v$ can be identified with the cokernel $\coker \{ \H^1_{\cF}(\cK, W^*) \ra \H^1_f(\cK_{\vbar}, W^*) / \H^1(\cK_{\vbar}, W^*)_{\tor}\}$.
Under certain circumstances, such as if $T= T_\grp A_f$, we have $W^*\cong T^\tau$ without
assuming \eqref{irredK}.
\end{rmk}

\subsection{The anticyclotomic control theorem}
Let $S$ be a finite set of places of $\cK$ including all those at which $V$ is ramified and let $S_p\subset S$ be the subset of those not dividing $p$. 
Let $\Sigma\subset S_p$.
Fix a topological generator $\gamma\in \Gamma$. We identify $\cO[\![T]\!]$ with $\Lambda=\O[\![\Gamma]\!]$ via the continuous
$\cO$-algebra map sending $1+T\mapsto \gamma$.
We will prove the following theorem:  

\begin{thm}[Anticyclotomic Control Theorem]\label{thm:control}
The $\Lambda$-module $\Xac^{\Sigma}(M)$ is $\Lambda$-torsion, and if $\fac^\Sigma(T)$ is a generator of its characteristic $\Lambda$-ideal 
$\chr (\Xac^\Sigma(M))$, then 
$$
\#\cO/\fac^{\Sigma}(0) = \# \H^1_{\SelMin}(\cK, W) \cdot C^\Sigma(W),  
$$
where 
$$
C^\Sigma(W) = \# \H^0(\cK_v, W) \cdot \# \H^0(\cK_{\overline{v}}, W) \cdot \prod_{\substack{w \in S_p\backslash \Sigma \\ w \text{ split }}} 
\# \H^1_{\ur}(\cK_w, W)  \cdot \prod_{w \in \Sigma}\# \H^1(\cK_w, W). 
$$
\end{thm}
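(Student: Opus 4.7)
The plan is a Mazur-style control theorem argument: construct a comparison map from $\H^1_{\SelMin}(\cK,W)$ to the $(\gamma-1)$-invariants of the Iwasawa-theoretic Selmer module, compute its kernel and cokernel in terms of $C^\Sigma(W)$, and then invoke the structure theorem for torsion $\Lambda$-modules to identify the relevant order with $\#\cO/\fac^\Sigma(0)$.

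Dualizing the augmentation $0 \to \Lambda \xrightarrow{\gamma-1} \Lambda \to \cO \to 0$ and tensoring with $T$ yields $0 \to W \to M \xrightarrow{\gamma-1} M \to 0$, identifying $W$ with $M[\gamma-1]$. Proposition~\ref{Sel-Sha prop} gives $\H^1_{\SelMin}(\cK,W)$ finite, and since passing to $\H^1_{\SelMin^\Sigma}$ only alters finitely many local conditions, $\H^1_{\SelMin^\Sigma}(\cK,W)$ is also finite; a standard rank argument then forces $X_\ac^\Sigma(M)$ to be $\Lambda$-torsion, since otherwise $X_\ac^\Sigma(M)/(\gamma-1)$ would be infinite. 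I would then set up the comparison diagram
\[
\xymatrix@R=1.3em@C=.8em{
0 \ar[r] & \H^1_{\SelMin^\Sigma}(\cK, W) \ar[r] \ar[d]^{s} & \H^1(\cK, W) \ar[r] \ar[d]^{h} & \bigoplus_w \tfrac{\H^1(\cK_w, W)}{\H^1_{\SelMin^\Sigma}(\cK_w, W)} \ar[d]^{g} \\
0 \ar[r] & \H^1_{\SelMin^\Sigma}(\cK, M)[\gamma-1] \ar[r] & \H^1(\cK, M)[\gamma-1] \ar[r] & \bigoplus_w \bigl(\tfrac{\H^1(\cK_w, M)}{\H^1_{\SelMin^\Sigma}(\cK_w, M)}\bigr)[\gamma-1]
}
\]
induced by $W \hookrightarrow M$. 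Hypothesis \eqref{irredK} gives $W^{G_\cK}=0$, so $h$ is injective by the long exact sequence coming from $0 \to W \to M \to M \to 0$, and the snake lemma then expresses $\#\H^1_{\SelMin^\Sigma}(\cK,M)[\gamma-1]$ as $\#\H^1_{\SelMin^\Sigma}(\cK,W)$ times a product of local errors $\#\ker g_w / \#\coker g_w$.

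The local analysis is the heart of the argument and is carried out place by place using the same long exact sequence. At $w=v$ and $w=\bar v$, where $\cK_\infty/\cK$ is ramified and the anticyclotomic condition is $0$ at $v$ but fully relaxed at $\bar v$, the computation yields exactly $\#\H^0(\cK_v,W)$ and $\#\H^0(\cK_{\bar v},W)$, by identifying $\H^0(\cK_w,M)/(\gamma-1)\H^0(\cK_w,M)$ with $\H^0(\cK_w,W)$ via the ramification of the anticyclotomic extension. At a split $w \in S_p\setminus\Sigma$, the unramified condition on $M$ specializes to $\H^1_\ur(\cK_w,W)$ while the $W$-side condition is $\H^1_f(\cK_w,W)=0$, contributing $\#\H^1_\ur(\cK_w,W)$. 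At inert or ramified $w \notin \Sigma \cup \{v,\bar v\}$, the purity hypothesis \eqref{pure} kills the relevant local cohomology and contributes no factor. Finally, comparing $\H^1_{\SelMin^\Sigma}(\cK,W)$ with $\H^1_{\SelMin}(\cK,W)$ via Poitou--Tate furnishes the additional factor $\prod_{w\in\Sigma}\#\H^1(\cK_w,W)$, yielding
$$\#\H^1_{\SelMin^\Sigma}(\cK,M)[\gamma-1] = \#\H^1_{\SelMin}(\cK,W)\cdot C^\Sigma(W).$$

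To conclude, Pontryagin duality gives $\#\H^1_{\SelMin^\Sigma}(\cK,M)[\gamma-1] = \#X_\ac^\Sigma(M)/(\gamma-1)X_\ac^\Sigma(M)$, and the $\Lambda$-module structure theorem identifies this as $\#\cO/\fac^\Sigma(0) \cdot \#X_\ac^\Sigma(M)[\gamma-1]$. Finiteness of the left side forces $\fac^\Sigma(0) \neq 0$, and the standard absence of nontrivial finite $\Lambda$-submodules in anticyclotomic Selmer modules of this type (argued à la Greenberg) gives $X_\ac^\Sigma(M)[\gamma-1]=0$. The main obstacle is the careful local analysis at $v$ and $\bar v$: the asymmetric anticyclotomic conditions combined with the ramification of $\cK_\infty/\cK$ require identifying the right $\H^0$-factors from $\Gamma$-cohomology of the local representations, and the identification $\H^0(\cK_w,M)/(\gamma-1)\cong\H^0(\cK_w,W)$ for $w\mid p$ is the key local step; once this is in place, the remainder is routine Mazur-style snake-chasing.
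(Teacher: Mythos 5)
The overall skeleton of your argument matches the paper's: construct a comparison map from $\H^1_{\SelMin^\Sigma}(\cK,W)$ into $\H^1_{\SelMin^\Sigma}(\cK,M)^\Gamma$, compute the kernel and cokernel via a snake-lemma diagram, then invoke the structure theorem and absence of pseudo-null submodules. However, there are three genuine gaps.

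First, your account of the factor at $\bar v$ is wrong. You assert that both $\#\H^0(\cK_v,W)$ and $\#\H^0(\cK_{\bar v},W)$ arise from identifying $\H^0(\cK_w,M)/(\gamma-1)\H^0(\cK_w,M)$ with $\H^0(\cK_w,W)$. That identification is the mechanism at $v$ (where the local condition for $M$ is $0$ and you compare quotients by propagated conditions), but at $\bar v$ the local condition for $M$ is all of $\H^1(\cK_{\bar v},M)$, so the local target of $g_{\bar v}$ is zero and $\ker(g_{\bar v})$ is the \emph{entire} source $\H^1(\cK_{\bar v},W)/\H^1(\cK_{\bar v},W)_{\ddiv}$. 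Its order is $\#\H^0(\cK_{\bar v},W)$ only by local Tate duality: $\H^1(\cK_{\bar v},W)/\H^1(\cK_{\bar v},W)_{\ddiv}$ is dual to $\H^1(\cK_{\bar v},T)_{\tors}$, which is identified with $\H^0(\cK_{\bar v},W)$ using $\H^0(\cK_{\bar v},V)=0$ (purity). The quantity $\H^0(\cK_{\bar v},M)/(\gamma-1)\H^0(\cK_{\bar v},M)$ you invoke is the kernel of $\H^1(\cK_{\bar v},W)\to\H^1(\cK_{\bar v},M)$, not the kernel of $g_{\bar v}$; the orders happen to coincide, but your mechanism would not survive if the local conditions were altered. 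This is a conceptual error, not just an abbreviation.

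Second, even at $v$ where your identification is correct in form, the equality $\#\H^0(\cK_v,M)/(\gamma-1)\H^0(\cK_v,M)=\#\H^0(\cK_v,W)$ requires knowing that $M^{G_{\cK_v}}$ is finite, which is far from automatic and is exactly where the hypothesis \eqref{HT} (no nonzero Hodge--Tate weight divisible by $p-1$) enters. The paper's Case 3(b) in Proposition \ref{prop:kerr} proves finiteness by reducing to $T^\vee_{P_v}$ being finite, which is shown by a purity argument ruling out unramified weight-$(-1)$ characters and then excluding the crystalline degenerate cases precisely via \eqref{HT}. You flag this as "the key local step" without addressing it, so your proof as written is incomplete precisely at the point you correctly identify as central.

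Third, your snake-lemma step quietly assumes exactness on the right of the top row, i.e., that $\H^1(\cK,W)\to\bigoplus_w\H^1(\cK_w,W)/\H^1_{\SelMin^\Sigma}(\cK_w,W)$ is surjective. This is Proposition \ref{Selmersurjprop} in the paper, proved by a nontrivial Poitou--Tate duality argument combined with the vanishing of $\H^1_{(\SelMin^S)_{\bar v}}(\cK,T)$; without it you cannot deduce the clean relation
$$\frac{\#\H^1_{\SelMin^\Sigma}(\cK,M)^\Gamma}{\#\H^1_{\SelMin^\Sigma}(\cK,W)}=\frac{\#\ker(r)}{\#\ker(h)}.$$
Similarly, your invocation of "the standard absence of nontrivial finite $\Lambda$-submodules" is a statement that the paper actually proves (Proposition \ref{nopseudonull-prop}) using the vanishing $\H^1_{\SelMin^\Sigma}(\cK,M)_\Gamma=0$, which itself requires the surjectivity above; it is not safe to invoke it as folklore here.
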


\noindent Our proof of Theorem~\ref{thm:control} follows the arguments of Greenberg in \cite[\S 4]{greenberg:iwasawa-elliptic}. 

\subsubsection{Surjectivity of the localization maps}\label{subsubsec:surjloc}
For a finite set $S'$ of finite places of $\cK$ let 
Let
$$
\cP_{\SelMin}(M; S') =  \prod_{w \in S'} \frac{\H^1(\cK_{w},M)}{\H^1_{\SelMin}(\cK_{w}, M)} \qquad \text{ and } \qquad 
\cP_{\SelMin}(W; S') = \prod_{w \in S'} \frac{\H^1(\cK_{w},W)}{\H^1_{\SelMin}(\cK_{w}, W)}. 
$$
The key to our result relating $\H^1_{\SelMin^S}(\cK_\infty, M)^{\Gamma}$ to $\H^1(\cK^S / \cK, W)$ is understanding the images of 
\begin{equation}\label{restrict-eq1}
\H^1(\cK^S/\cK,M) \stackrel{\loc_S}{\longrightarrow}
\cP_{\SelMin}(M; S).  
\end{equation}
and
\begin{equation}\label{restrict-eq2}
\H^1(\cK^S / \cK,W) \stackrel{\loc_S}{\longrightarrow}
\cP_{\SelMin}(W; S).
\end{equation}
Here $\cK^S/\cK$ is the maximal extension unramified at all finite places not in $S$.

\begin{prop}\label{Selmersurjprop}
The restriction maps \eqref{restrict-eq1} and \eqref{restrict-eq2} are surjective.
\end{prop}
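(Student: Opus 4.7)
The plan is to apply Poitou--Tate global duality (Theorem~\ref{thm:poitou-tate}) to each restriction map, identifying its cokernel with the Pontrjagin dual of a global Selmer module for the Cartier dual, and then show that this dual Selmer module vanishes using \eqref{irredK} together with the anticyclotomic nature of the twist by $\Psi$.

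For the map \eqref{restrict-eq1}, let $\cG$ denote the Selmer structure on $M$ obtained from $\SelMin$ by relaxing the condition to the full local group at every place of $S$, i.e.\ $\H^1_{\cG}(\cK_w,M)=\H^1(\cK_w,M)$ for $w\in S$ and $\H^1_{\cG}(\cK_w,M)=\H^1_{\ur}(\cK_w,M)$ for $w\notin S$. Then $\H^1_{\cG}(\cK,M)=\H^1(\cK^S/\cK,M)$ and $\SelMin\preceq\cG$. Theorem~\ref{thm:poitou-tate} identifies the image of $\loc_S$ in \eqref{restrict-eq1} with the annihilator, under the sum of local Tate pairings, of the image of
$$
\H^1_{(\SelMin)^*}(\cK,M^*)\;\longrightarrow\;\bigoplus_{w\in S}\H^1_{(\SelMin)^*}(\cK_w,M^*)/\H^1_{\cG^*}(\cK_w,M^*).
$$
Because $\cG^*$ imposes the zero condition at each $w\in S$, surjectivity of \eqref{restrict-eq1} reduces to the vanishing $\H^1_{(\SelMin)^*}(\cK,M^*)=0$.

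To establish this vanishing, recall that $M^*\cong T^\vee(1)\otimes_{\cO}\Lambda$ with $G_\cK$-action $\rho^\vee(1)\otimes\Psi$, and that the dual local conditions $(\SelMin)^*$ require a class to be trivial at $\bar v$, unramified at split primes outside $S$, and otherwise unrestricted. The purity \eqref{pure} together with the irreducibility \eqref{irredK} force $V^\vee(1)$ to have no nonzero $G_{\cK_\infty}$-fixed vectors, so $(M^*)^{G_{\cK_\infty}}=0$; inflation--restriction then injects $\H^1(\cK,M^*)$ into $\H^1(\cK_\infty,M^*)^\Gamma$. A $\Gamma$-invariant Iwasawa cohomology class that is locally trivial at the prime of $\cK_\infty$ above $\bar v$ must vanish globally, by a Chebotarev density argument applied to the $\Lambda$-linear extension of the associated extension class, using the absolute irreducibility of $\overline V$ over $G_\cK$. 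This yields the required vanishing.

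The surjectivity of \eqref{restrict-eq2} for $W$ follows by the same template, with $(M,M^*)$ replaced by $(W,T^*)$. In this case the dual Selmer group is finite rather than $\Lambda$-adic, and the analogous vanishing is forced by \eqref{crk1} and \eqref{surjp} (ensuring the relevant cokernels are finite and compatible with the Iwasawa specialization) together with \eqref{irredK}. The principal technical obstacle is the argument for $\H^1_{(\SelMin)^*}(\cK,M^*)=0$: the subtle point is the asymmetry between the conditions at $v$ and $\bar v$, and one must carefully combine local triviality at $\bar v$ with the absence of $\Gamma$-invariants in $M^*$ to promote local triviality at one place above $p$ into a global vanishing statement.
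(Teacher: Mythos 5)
Your reduction via Poitou--Tate duality to the vanishing of a dual Selmer group for $M^*$ matches the paper's first step, but the crucial vanishing argument itself has a genuine gap. You assert that a $\Gamma$-invariant class in $\H^1(\cK_\infty, M^*)$ that is locally trivial at the place above $\bv$ must vanish globally ``by a Chebotarev density argument applied to the $\Lambda$-linear extension of the associated extension class.'' This does not work: Chebotarev-type arguments in this setting show a global class is nonzero locally at a \emph{positive density} of auxiliary primes (or, conversely, kill a class known to vanish at a full density of places), but they do not allow you to promote local triviality at the single place $\bv$ to global vanishing, even granting absolute irreducibility of $\overline V$. Nothing in your sketch explains why the irreducibility of $\overline V$ and the strict condition at $\bv$ alone should suffice.

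The paper's proof instead proceeds by descent and Nakayama. Using $M^*/(\gamma-1)M^*\cong T$, one shows (via $\H^0(\cK_\bv,T)=0$, which follows from purity) that the dual Selmer group $\H^1_{(\SelMin^S)_\bv}(\cK,M^*)$ modulo $(\gamma-1)$ injects into the finite-level group $\H^1_{(\SelMin^S)_\bv}(\cK,T)$. That group is torsion-free by \eqref{irredK}, and it is also finite because $\H^1_{(\SelMin^S)_\bv}(\cK,T)\otimes L/\cO$ injects into $\H^1_{(\SelMin^S)_\bv}(\cK,W)$, whose finiteness is supplied by Proposition~\ref{Sel-Sha prop} (with the roles of $v$ and $\bv$ reversed). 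Hence $\H^1_{(\SelMin^S)_\bv}(\cK,T)=0$, and Nakayama's lemma kills $\H^1_{(\SelMin^S)_\bv}(\cK,M^*)$. The appeal to the finiteness established in Proposition~\ref{Sel-Sha prop} is the essential input that your proposal is missing; without it, there is no way to get the finite-level vanishing that feeds Nakayama. Your remark that $(M^*)^{G_{\cK_\infty}}=0$ is correct and is indeed compatible with the paper's use of $\H^0(\cK_\bv,T)=0$, but it is not by itself enough to conclude. For \eqref{restrict-eq2} your statement is closer to the mark since $\H^1_{(\SelMin^S)_\bv}(\cK,T)=0$ is exactly what one needs, but you should recognize that this vanishing is precisely the output of the argument you did not supply for the $\Lambda$-adic case, not an independent consequence of \eqref{crk1}, \eqref{surjp}, and \eqref{irredK} via some separate mechanism.
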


\begin{proof}
Let $M^* = M^\vee(1)$, which is just $T\otimes_\O\Lambda$ with $G_{\cK}$-action given by $\rho_f\otimes\Psi$.
Recall that 
$$
\H^1_{(\SelMin^S)_{\bv}}(\cK, M^*) = \{c\in \H^1_{\SelMin^S}(\cK,M^*) \ : \ \loc_\bv c = 0 \}.
$$
By Theorem \ref{thm:poitou-tate} the dual of the cokernel of \eqref{restrict-eq1} is identified with 
a quotient of  $\H^1_{(\SelMin^S)_{\bv}}(\cK, M^*)$.
Therefore, to prove the desired surjectivity of \eqref{restrict-eq1}, it suffices to show that 
$\H^1_{(\SelMin^S)_{\bv}}(\cK, M^*)=0$.

Note that $M^*/(\gamma-1)M^* \isoarrow T$. We claim that the natural injection
$$
\H^1(\cK^S / \cK ,M^*)/(\gamma-1) \H^1(\cK^S / \cK ,M^*)\hookrightarrow \H^1(\cK^S / \cK,T)
$$
induces an injection
$$
\H^1_{(\SelMin^S)^*}(\cK^S / \cK ,M^*)/(\gamma-1) \H^1_{(\SelMin^S)^*}(\cK^S / \cK ,M^*) \hookrightarrow \H^1_{(\SelMin^S)_\bv}(\cK, T). 
$$
For this, suppose $c\in \H^1_{(\SelMin^S)^*}(\cK^S /\cK ,M^*)$ has trivial image in $\H^1_{(\SelMin^S)_\bv}(\cK, T)$. Then
$c = (\gamma-1)d$ for some $d\in \H^1(\cK^S/\cK ,M^*)$ such that $(\gamma-1)d = 0$ in $\H^1(\cK_\bv,M^*)$.
But the kernel of multiplication by $\gamma-1$ on $\H^1(\cK_\bv,M^*)$ is the image of 
$\H^0(\cK_\bv,T)$. The vanishing of the latter follows from $H^0(\cK_\bv,V)=0$ (which is true as $V$ is pure of 
weight different from $0$ or $1$: $V^{G_K}\neq 0$ would imply that $1$ was an eigenvalue of Frobenius on $WD_\bv(V)^{N=0}$,
which would contradict purity if the weight is not $0$ or $1$).

Next we note that the canonical isomorphism $T\otimes_\O L/\O \cong W$ induces
an injection
$$
\H^1_{(\SelMin^S)_\bv}(\cK, T) \otimes_\O L/\O \hookrightarrow \H^1_{(\SelMin^S)_\bv}(\cK, W).
$$
It follows from Proposition~\ref{Sel-Sha prop} that the right-hand side is finite (reversing the roles
of $v$ and $\bv$ and using that $H^1(K_w,W)$ is finite for $w\nmid p$ (cf.~Section \ref{subsubsec:finite}).  
As \eqref{irredK} implies
$\H^1_{(\SelMin^S)_\bv}(\cK, T)$ is torsion-free, it follows from the finiteness of 
$\H^1_{(\SelMin^S)_\bv}(\cK, W)$ that $\H^1_{(\SelMin^S)_\bv}(\cK, T)=0$. Hence
$$
\H^1_{(\SelMin^S)_\bv}(\cK, M^*)/(\gamma-1)\H^1_{(\SelMin^S)_\bv}(\cK, M^*) = 0,
$$ 
and so, by Nakayama's lemma, $\H^1_{(\SelMin^S)_\bv}(\cK, M^*)=0$.
This completes the proof of the surjectivity of \eqref{restrict-eq1}. The proof of the surjectivity of \eqref{restrict-eq2} is similar: Poitou--Tate duality identifies the cokernel with a quotient of 
$\H^1_{(\SelMin^S)_\bv}(\cK, T)$, which we have already seen to be $0$.
\end{proof}

\subsubsection{Trivial coinvariants}
We now show that $\H^1(\cK^S/ \cK, M)_{\Gamma}$ and $\H^1_{\SelMin^\Sigma}(\cK, M)_\Gamma$ both vanish: 

\begin{lem}\label{lem:trivcoinv}
We have $\H^1(\cK^S/ \cK, M)_{\Gamma} = 0$ and $\H^1_{\SelMin^\Sigma}(\cK, M)_{\Gamma} = 0$. 
\end{lem}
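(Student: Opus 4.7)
The plan is to deduce both vanishings from a common mechanism based on the short exact sequence of $G_\cK$-modules
\[
0 \to W \to M \xrightarrow{\gamma-1} M \to 0,
\]
which arises because $\gamma-1$ acts surjectively on the cofree $\Lambda$-module $\widehat{\Lambda}$ with kernel $\Hom_{\cont}(\cO,\Q_p/\Z_p)\cong L/\cO$; tensoring with the free $\cO$-module $T$ produces the displayed SES. First I would verify $M^{G_\cK}=0$: the $G_\cK$-representation $M[p]=(T/pT)\otimes_\kappa\widehat{\Lambda}[p]$ admits a $G_\cK$-stable filtration by $(\gamma-1)$-depth on $\widehat{\Lambda}[p]$, with graded pieces isomorphic to $\overline V$ (because the mod-$p$ character $\overline\Psi\colon G_\cK\to(\kappa[\![\gamma-1]\!])^\times$ takes values in the $1$-units and therefore acts trivially on each graded piece); by the irreducibility assumption \eqref{irredK} we have $\overline V^{G_\cK}=0$, so $M[p]^{G_\cK}=0$, whence $M^{G_\cK}=0$ since $M$ is $p$-primary torsion.

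Second, the long exact sequence of $\H^*(\cK^S/\cK,-)$ attached to the SES above, together with $M^{G_\cK}=0$, yields
\[
0\to\H^1(\cK^S/\cK,W)\to\H^1(\cK^S/\cK,M)\xrightarrow{\gamma-1}\H^1(\cK^S/\cK,M)\xrightarrow{\delta}\H^2(\cK^S/\cK,W),
\]
realizing $\H^1(\cK^S/\cK,M)_\Gamma\cong\im(\delta)$ as a subgroup of $\H^2(\cK^S/\cK,W)$. A parallel restricted argument applies to the Selmer module, since $\gamma-1$, acting via the $\Lambda$-module structure on $M$, commutes with the $G_\cK$-action and so preserves each local condition defining $\SelMin^\Sigma$; this yields the corresponding embedding for $\H^1_{\SelMin^\Sigma}(\cK,M)_\Gamma$.

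Finally, to show these embeddings have zero image, I would dualize: letting $Y$ denote the Pontryagin dual of the relevant $\H^1$ (either $Y=\H^1(\cK^S/\cK,M)^\vee$ or $Y=\Xac^\Sigma(M)$), one reduces to showing $Y[\gamma-1]=0$. I would combine two inputs: a Greenberg--Matsuno-type argument along the lines of \cite[\S 4]{greenberg:iwasawa-elliptic}, which uses \eqref{irredK} to show $Y$ has no nonzero pseudo-null $\Lambda$-submodule; and the fact that $Y[\gamma-1]$ itself is pseudo-null (equivalently finite, since $\Lambda=\cO[\![\gamma-1]\!]$ is a $2$-dimensional regular local ring), which follows from the embedding of the previous step by exploiting Poitou--Tate duality over the imaginary quadratic field $\cK$ together with the purity of $V$ of weight $-1\neq 0,1$ granted by \eqref{cdual} (this ensures $\H^2(\cK_w,W)=0$ for all $w\nmid p\infty$, so the image of $\delta$ is controlled by local contributions at $v$ and $\bar v$ alone). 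The main obstacle is the ``no pseudo-null submodule'' step: its adaptation from the cyclotomic to the anticyclotomic setting requires reconciling the relaxed local condition at $\bar v$ and the symmetry \eqref{cdual} with Greenberg's original argument---a subtle but essentially formal consequence of \eqref{irredK}.
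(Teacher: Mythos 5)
Your setup via the short exact sequence $0 \to W \to M \xra{\gamma-1} M \to 0$ and the resulting embedding $\H^1(\cK^S/\cK,M)_\Gamma \hookrightarrow \H^2(\cK^S/\cK,W)$ is the right starting point and matches the paper. (Your preliminary verification of $M^{G_\cK}=0$ is correct but not needed for this embedding, which follows directly from the long exact sequence.) However, the rest of the proposal has two real problems.

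First, you never actually prove the vanishing. The paper's argument is concrete: $\H^2(\cK_w,W)$ is Tate-dual to $\H^0(\cK_w,T)$, which vanishes for \emph{all} $w\in S$ (including $v$ and $\bar v$, by weight/purity at $p$ and by the discussion in \S\ref{subsubsec:finite} away from $p$), so $\H^2(\cK^S/\cK,W)=\Sha^2_S(\cK,W)$. By Poitou–Tate, this is dual to $\Sha^1_S(\cK,T)$, which is torsion-free by \eqref{irredK} and torsion by \eqref{crk1}+\eqref{surjp}, hence zero. You instead assert that the image of $\delta$ ``is controlled by local contributions at $v$ and $\bar v$ alone'' — which mislocates where the work is (those local $\H^2$'s also vanish) — and you defer the finish to a ``Greenberg–Matsuno-type'' no-pseudo-null-submodule input. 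But in this paper that input is Proposition~\ref{nopseudonull-prop}, and its proof explicitly \emph{uses} Lemma~\ref{lem:trivcoinv}. Invoking it here is circular. You need the direct Poitou–Tate computation of $\Sha^1_S(\cK,T)=0$, not a structural result about $X^\Sigma_{\ac}(M)$.

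Second, for the Selmer module $\H^1_{\SelMin^\Sigma}(\cK,M)_\Gamma$, the claim that ``$\gamma-1$ preserves each local condition, so a parallel restricted argument applies'' does not yield a short exact sequence of Selmer modules: surjectivity of $\gamma-1$ on $\H^1_{\SelMin^\Sigma}(\cK,M)$ is precisely the statement being proved, so there is no cofiber $\H^2$-term to embed into. The paper's route is different and essential: write the exact sequence $0 \to \H^1_{\SelMin^\Sigma}(\cK,M) \to \H^1(\cK^S/\cK,M) \to \cP_{\SelMin}(M;S\setminus\Sigma)\to 0$ — exactness on the right being Proposition~\ref{Selmersurjprop} — apply the snake lemma for $\gamma-1$, and use the surjectivity of $\H^1(\cK^S/\cK,W)\to\cP_{\SelMin}(W;S\setminus\Sigma)\to\cP_{\SelMin}(M;S\setminus\Sigma)^\Gamma$ to show $\H^1_{\SelMin^\Sigma}(\cK,M)_\Gamma$ injects into $\H^1(\cK^S/\cK,M)_\Gamma$, which is already known to vanish. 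Your proposal omits the surjectivity-of-localization input entirely, and that is the load-bearing ingredient for the second half of the lemma.
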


\begin{proof}
The long exact sequence on Galois cohomology associated to the short exact sequence 
$0 \ra W \ra M \xra{\gamma -1} M \ra 0$ yields an injection $\H^1(\cK^S/\cK, M)_\Gamma \hra \H^2(\cK^S / \cK ,W)$.
The local cohomology group $\H^2(\cK_w,W)$ is dual to $\H^0(\cK_w,T)$, and the latter is $0$;
for $w \mid p$ this was explained in the proof of Proposition \ref{Selmersurjprop}, and for $w\nmid p$
this was explained in Section~\ref{subsubsec:finite}. Consequently (using the notation of \cite[\S 4]{milne:duality}),
$$
\H^2(\cK^S / \cK ,W) = \Sha^2_S(\cK, W) = \{ c\in \H^2(\cK^S / \cK ,W) \colon  \loc_w c=0 \ \forall w\in S\}.
$$
By Poitou-Tate duality \cite[Thm.4.10(a)]{milne:duality}, this group is dual to 
$$
\Sha^1_S(\cK, T) = \{ c\in \H^1(\cK^S / \cK ,T) \colon \loc_w c=0 \ \forall w\in S\}, 
$$ 
and the latter is trivial. Indeed, \eqref{irredK} implies that $\Sha^1_S(\cK,T)$ is torsion-free while \eqref{crk1} and \eqref{surjp} imply that 
$\Sha^1_S(\cK,T)$ is torsion and thus, $\H^1(\cK^S/ \cK, M)_{\Gamma} = 0$. 

To show that $\H^1_{\SelMin^\Sigma}(\cK, M)_{\Gamma} = 0$, consider the exact sequence
$$
0 \ra \H^1_{\SelMin^\Sigma}(\cK,M) \ra \H^1(\cK^S/\cK,M) \ra \cP_{\SelMin}(M;S\backslash\Sigma) \ra 0.
$$
The exactness on the right is a consequence of Proposition \ref{Selmersurjprop}. Multiplying by $\gamma-1$, we obtain from the snake lemma 
the exact sequence
$$
\H^1(\cK^S/\cK,W) = \H^1(\cK^S/\cK,M)^\Gamma \rightarrow \cP_{\SelMin}(M;S\backslash\Sigma)^\Gamma
\ra \H^1_{\SelMin^\Sigma}(\cK,M)_{\Gamma} \ra \H^1(\cK^S/\cK,M)_{\Gamma}.
$$
The map $\H^1(\cK^S/\cK,W) \ra \cP_{\SelMin}(M;S\backslash\Sigma)^\Gamma$ is the composite of the restriction map
$\H^1(\cK^S/\cK,W)\ra \cP_{\SelMin}(W;S\backslash\Sigma)$, which is surjective by Proposition \ref{Selmersurjprop}, and
the map $\cP_{\SelMin}(W;S\backslash\Sigma)\ra \cP_{\SelMin}(M;S\backslash\Sigma)^\Gamma$, which is also surjective
(as the maps $\H^1(\cK_w,W)\rightarrow \H^1(\cK_w,M)^\Gamma$ are surjective). It follows that $\H^1_{\SelMin^\Sigma}(\cK,M)_{\Gamma}$ injects
into $\H^1(\cK^S/\cK,M)_{\Gamma}$. But we have already shown the latter to be trivial.
\end{proof}

\subsubsection{Computing $\# \ker(r)$.}\label{subsubsec:local}
We now calculate the order of the kernel of the map
$$
r \colon \cP_{\SelMin}(W; S\backslash \Sigma) \ra \cP_{\SelMin}(M; S \backslash \Sigma)^{\Gamma}.
$$

\begin{prop}\label{prop:kerr} The kernel of $r$ has order
\begin{equation}
\# \ker(r) = \# \H^0(\cK_v, W) \cdot \# \H^0(\cK_{\vbar}, W)\cdot \prod_{\substack{w\in S_p\backslash\Sigma \\ w \text{ split}}} c_{w}^{(p)}(W),
\end{equation}
where $c_{w}^{(p)}(W) := [\H^1_{\ur}(\cK_{w}, W) : \H^1_{f}(K_{w}, W)] = \#\H^1_\ur(\cK_w,W)$ are the $p$-parts of the local Tamagawa numbers. 
\end{prop}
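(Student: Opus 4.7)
The plan is to reduce the proposition to a local calculation at each place and then carry it out case by case. Since $r$ factors as $r = \prod_{w \in S \setminus \Sigma} r_w$ with
$$
r_w : \H^1(\cK_w, W) / \H^1_{\SelMin}(\cK_w, W) \to \bigl(\H^1(\cK_w, M) / \H^1_{\SelMin}(\cK_w, M)\bigr)^\Gamma,
$$
it suffices to determine $\#\ker(r_w)$ at each $w$ and multiply the results.

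The basic local tool is the short exact sequence $0 \to W \to M \xra{\gamma - 1} M \to 0$ of $G_\cK$-modules, already exploited in the proof of Lemma \ref{lem:trivcoinv}. Its long exact sequence in $G_{\cK_w}$-cohomology yields
$$
0 \to \H^0(\cK_w, M)_\Gamma \to \H^1(\cK_w, W) \to \H^1(\cK_w, M)^\Gamma \to 0,
$$
and combining this with the snake lemma applied to the diagram comparing the two Selmer quotients at $w$ expresses $\ker(r_w)$ in terms of $\H^0(\cK_w, M)_\Gamma$ together with the local Selmer subgroups. I would then analyze $r_w$ place by place.

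For $w = \vbar$, the target vanishes since $\H^1_{\SelMin}(\cK_\vbar, M) = \H^1(\cK_\vbar, M)$, so $\ker(r_\vbar) = \H^1(\cK_\vbar, W) / \H^1(\cK_\vbar, W)_\ddiv$; Tate local duality and the long exact sequence from $0 \to T^* \to V^* \to W^* \to 0$, together with $(V^*)^{G_{\cK_\vbar}} = 0$ from purity, identify the order of this quotient with $\#\H^0(\cK_\vbar, W^*)$. Then \eqref{cdual} and \eqref{irredK} give $W^* \cong W^\tau$, and since $\tau$ interchanges $v$ and $\vbar$, the order equals $\#\H^0(\cK_v, W)$. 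Dually, for $w = v$ both local Selmer conditions are $0$, so $\ker(r_v) = \H^0(\cK_v, M)_\Gamma$; a parallel argument, using Shapiro's lemma to identify $\H^0(\cK_v, M)$ with an Iwasawa-theoretic $\H^0$ at the unique prime above $v$ in $\cK_\infty$ and again invoking the $\tau$-symmetry, should give $\#\H^0(\cK_\vbar, W)$.

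For $w \in S_p \setminus \Sigma$ split in $\cK/\Q$, purity at weight $-1$ forces $\H^1_f(\cK_w, W) = 0$, so the source of $r_w$ is the finite group $\H^1(\cK_w, W)$; a direct comparison in the spirit of \cite[\S4]{greenberg:iwasawa-elliptic} identifies $\ker(r_w)$ with $\H^1_\ur(\cK_w, W)$, yielding the Tamagawa factor $c_w^{(p)}(W)$. For $w \in S_p \setminus \Sigma$ not split in $\cK/\Q$, I would show $\ker(r_w) = 0$: since $\tau$ fixes such $w$ while acting antisymmetrically on $\Gamma$, the $\Gamma$-coinvariants of $\H^0(\cK_w, M)$ should vanish. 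The main obstacle is the case $w = v$, where the target does not vanish and one must carry out a genuinely Iwasawa-theoretic local computation, carefully combining \eqref{cdual} with the structure of the totally ramified anticyclotomic $\Z_p$-extension at $v$ in order to match the contribution predicted by local duality at $\vbar$.
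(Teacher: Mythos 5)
Your decomposition $r = \prod_w r_w$ and the identification of $\ker(r_w)$ with the image of $\H^0(\cK_w,M)_\Gamma$ via the long exact sequence for $0\to W\to M\xra{\gamma-1}M\to 0$ is exactly the paper's framework, and your handling of the cases $w\nmid p$ (split and non-split) and $w=\vbar$ is essentially correct. (Two minor points at $\vbar$: the hypotheses \eqref{cdual} and \eqref{irredK} give $W^*\cong T^\tau$, not $W^*\cong W^\tau$; and Tate duality dualizes $\H^1(\cK_\vbar,W)/\H^1(\cK_\vbar,W)_\div$ to $\H^1(\cK_\vbar,W^*)_\tor$, which the long exact sequence for $0\to W^*\to V^*\to W^*\otimes L/\cO\to 0$ identifies with $\H^0(\cK_\vbar,W^*\otimes L/\cO)$. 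Since the final formula contains both $\#\H^0(\cK_v,W)$ and $\#\H^0(\cK_{\vbar},W)$, your swapped labeling relative to the paper's does no harm.)

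The gap is the case $w=v$, which you yourself flag as "the main obstacle" and then leave unresolved. This is in fact the entire point of the proposition and the only place in the paper where the hypothesis \eqref{HT} enters. The issue is that the target $\H^1(\cK_v,M)^\Gamma$ is \emph{not} zero, so you cannot compute $\ker(r_v)$ by duality alone: you must show directly that $M^{G_{\cK_v}}$ is a \emph{finite} $\cO$-module, so that $\#M^{G_{\cK_v}}/(\gamma-1)M^{G_{\cK_v}} = \#M^{G_{\cK_v}}[\gamma-1] = \#\H^0(\cK_v,W)$. This finiteness reduces (via the identification $M\cong\Hom_\cont(T^\vee\otimes_\cO\Lambda,\Qp/\Zp)$) to showing $T^\vee_{P_v}$ is finite, where $P_v=\ker\Psi|_{G_{\cK_v}}$, i.e.\ that $V^{P_v}=0$. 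The paper proves this by a case analysis on the structure of $V|_{G_{\cK_v}}$ (crystalline vs.\ non-crystalline semistable): any $P_v$-invariant line is a crystalline or semistable character $\eps^a\alpha$ with $\alpha$ unramified, and the condition that $\eps^a\alpha$ factor through the pro-$p$ quotient $\Gamma_v$ forces $a\equiv 0\pmod{p-1}$; then \eqref{HT} forces $a=0$, whence $\alpha$ would have to factor through the finite quotient $\Gamma_v/\mathrm{im}(I_v)$ and be finite-order, contradicting purity of weight $-1$. None of this appears in your proposal. Your appeal to Shapiro's lemma is also not the right tool: $v$ is (eventually totally) ramified in $\cK_\infty/\cK$, so $M|_{G_{\cK_v}}$ is not (co)induced from a finite-index subgroup, and no $\tau$-symmetry argument substitutes for the genuine computation of $M^{G_{\cK_v}}$.
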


\begin{proof}
Let $w$ be a place of $\cK$. By the long exact sequence on Galois cohomology associated to the short sequence 
$$
0 \ra W \ra M \xra{\times (\gamma -1)} M \ra 0,  
$$
the kernel of the restriction map $r_w:\H^1(\cK_w, W) \ra \H^1(\cK_w, M)^{\Gamma}$ is the image of 
$M^{G_{\cK_w}} / (\gamma -1) M^{G_{\cK_w}}$ under the coboundary map. Let $\ell$ 
be the prime below $w$. Unlike the cyclotomic case over $\Q$ treated in \cite[\S 3]{greenberg:iwasawa-elliptic} 
where every prime $w$ is finitely decomposed in the $\Zp$-extension, 
we need to consider the cases of $\ell$ being split or non-split in $\cK$ separately.   

\smallskip 

\noindent{\bf Case 1(a): $w \nmid p$, $W$ is ramified at $w$ and $\ell$ is split in $\cK$.} 
We have a commutative diagram 
$$
\xymatrix{
0 \ar[r] & \H^1_{\ur}(\cK_w, W) \ar[r] \ar[d] & \H^1(\cK_w, W) \ar[r] \ar[d] & \H^1(I_w, W)^{G_{\cK_w}} \ar[d]^{} \ar[r] &  0 \\
0 \ar[r] & \H^1_{\ur}(\cK_w, M) \ar[r] & \H^1(\cK_w, M) \ar[r] & \H^1(I_w, M) & \\
} 
$$
The kernel of the right-most map is just the image of $(M^{I_w}/(\gamma-1)M^{I_w})^{G_{\cK_w}}$.  
Since $\Psi$ is not ramified at $w$, $M^{I_w}$ is $(\gamma-1)$-divisible, hence this kernel is trivial. 
It follows that the map $\H^1(\cK_w, W) / \H^1_{\ur}(\cK_w, W) \ra \H^1(\cK_w, M) / \H^1_{\ur}(\cK_w, M)$
is injective and hence that the kernel of the map 
$
r_w \colon \H^1(\cK_w, W) / \H^1_f(\cK_w, W) \ra \H^1(\cK_w, M) / \H^1_{\ur}(\cK_w, M),
$
is isomorphic to $\H^1_{\ur}(\cK_w, W) / \H^1_{f}(\cK_w, W)$. But the order of the latter is exactly the $p$-part $c_w^{(p)}(W)$ of the 
Tamagawa number at $w$. 

\smallskip 

\noindent{\bf Case 1(b): $w \nmid p$, $W$ is ramified at $w$ and $\ell$ is not split in $\cK$.} In this case $\ell$ is inert or ramified 
in $\cK$, and by the definition of the anticyclotomic Selmer structure both $\H^1_{\SelMin}(K_w, W)$ and $\H^1_{\SelMin}(K_w, M)$ are trivial. 
Hence 
$\ker(r_w) = \ker\{H^1(\cK_w,W)\twoheadrightarrow H^1(\cK_w,M)[\gamma-1])\}$
and so equals the image of $M^{G_{\cK_w}}/(\gamma-1)M^{G_{\cK_w}}\hookrightarrow H^1(\cK_w,W)$. However, 
since $w$ is not split, $\Psi$ is trivial on $G_{\cK_w}$ and hence, $M^{G_{\cK_w}}$ is divisible by $\gamma-1$. It follows that $\ker(r_w) = 0$. 

\smallskip 

\noindent {\bf Case 2(a): $w \nmid p$, $W$ is unramified at $w$ and $\ell$ is split in $\cK$.} Similarly to Case 1(a), 
we obtain $\ker(r_w) \isom \H^1_{\ur}(K_w, W) / \H^1_f(K_w, W)$ which is trivial since the two local conditions coincide. 

\smallskip

\noindent {\bf Case 2(b): $w \nmid p$, $W$ is unramified at $w$ and $\ell$ is not split in $\cK$.} Exactly the same argument
as employed for Case 1(b) shows that $\ker(r_{w}) = 0$. 

\smallskip 

\noindent {\bf Case 3(a): $w = \bv$.} We have $\ker(r_{\vbar}) = \H^1(\cK_{\vbar}, W) / \H^1(\cK_{\vbar}, W)_{\ddiv}$. 
By Tate local duality $\H^1(\cK_\bv,W)/\H^1(\cK_\bv,W)_\div$ is dual to $\H^1(\cK_\bv,T)_\tor$, which is just
$\ker\{\H^1(\cK_\bv,T)\rightarrow \H^1(\cK_\bv,V)\}\cong \H^0(\cK_\bv,W)/\H^0(\cK_\bv,W)_\div$. But $\H^0(\cK_\bv,V) = 0$ (as noted in the proof of Proposition \ref{Selmersurjprop}, this is a consequence of being pure of
weight different from $0$ or $1$), so $\H^0(\cK_\bv,W)_\div =0$ and $\# \ker(r_{\bv}) = \#\H^0(\cK_\bv,W)$.

\smallskip
\noindent{\bf Case 3(b): $w=v$.}
In this case the map is $r_v \colon \H^1(\cK_v, W) \ra \H^1(\cK_v, M)^\Gamma$ and 
we have 
$$
\ker(r_v) \isom M^{G_{\cK_{v}}}/(\gamma-1)M^{G_{\cK_{v}}}\hookrightarrow \H^1(\cK_{v},W).
$$
Let $P_{v} = \ker\Psi|_{G_{\cK_{v}}}$ and $\Gamma_{v} = G_{\cK_{v}}/P_{v}\hookrightarrow \Gamma$, where $\Psi$ is as in the beginning of Section~\ref{bsd-control}.   
Then $\Gamma_{v}$ has finite index in $\Gamma$, and the image of $I_{v}$ in $\Gamma_{v}$ also has finite index.
Let $\gamma_{v} \in \Gamma_{v}$ be a topological generator.
Let $T^\vee = \Hom_\Zp(T,\Zp)$ and let $T^\vee_{P_{v}}$ be its $P_{v}$-coinvariants.
Then $M  = T\otimes_\O\widehat{\Lambda} \cong \Hom_{\cont}(T^\vee\otimes_\O\Lambda,\Qp/\Zp)$ and so
$M^{P_{v}} \cong \Hom_{\cont}(T^\vee_{P_{v}}\otimes_\O \Lambda,\Qp/\Zp)$.
If $\#T^\vee_{P^{v}}$ is finite, then $M^{G_{\cK_{v}}}$ is therefore isomorphic to a submodule of
$\Hom_\Zp(T^\vee_{P_{v}}\otimes_\O\Lambda/(\gamma_{v}^{p^t}-1),\Qp/\Zp)$, which has finite order.
Here $t>0$ is such that $\gamma_{v}^{p^t}$ acts trivially on~$T^\vee_{P_{v}}$. This shows that
$M^{G_{\cK_{v}}}$ has finite order if $\#T^\vee_{P_{v}}$ is finite, in which case
$$
\#M^{G_{\cK_{v}}}/(\gamma-1)M^{G_{\cK_{v}}} = \#M^{G_{\cK_{v}}}[\gamma-1] =  \#M[\gamma-1]^{G_{\cK_{v}}} = \#\H^0(\cK_v, W).
$$

It remains to show that $\#T^\vee_{P_{v}}$ is finite, which we will do by arguing by contradiction. 
Assume that $\#T^\vee_{P_{v}}$ is not finite. Then $T^{P_{v}}\neq 0$
and hence $V^{P_v}\neq 0$. As $V$ is two-dimensional and semistable, there are two cases
to consider: (i) $V$ is crystalline and (ii) $V$ is non-crystalline and hence a non-split extension of the form
$0 \ra L(\eps\alpha)\ra V \ra L(\alpha) \ra 0$ with $\alpha$ unramified. 
In case (i), $V^{P_v}$ would have to be, possibly after a finite extension of scalars, a sum of 
one-dimensional crystalline representations of weight $-1$. Such a crystalline character must be of the form $\eps^a\alpha$ for some
integer $a$ and some unramified character $\alpha$, while the condition of being weight $-1$ means that
$\alpha(\Frob_v)$ is a Weil number of absolute value $p^{-1/2+a}$.  
However, since $\eps^a\alpha$ factors through a pro-$p$-group,
so must $\eps^a|_{I_v} = \eps^a\alpha|_{I_v}$. But this only happens if $a\equiv 0 \pmod{p-1}$. As $a$ is a Hodge-Tate
weight of $V$, it follows from \eqref{HT} that we must have $a=0$. Then $\alpha$ must factor through the 
quotient of $\Gamma_v$ by the image of $I_v$. This quotient is finite, so $\alpha$ must have finite order, contradicting
it being a character of weight $-1$. 
On the other hand, if $V$ is as in case (ii), 
then $V$ is also a non-split extension of $P_v$ and so if $V^{P_v}\neq 0$, then again $\eps\alpha$, $\alpha$ unramified,  must factor through $\Gamma_v$,
which we have already seen to be impossible. This contradiction completes the last case and hence proves the lemma.
\end{proof}

\begin{rmk}\label{rm:HT}
The proof of Case 3(b) above is the only place where the hypothesis \eqref{HT} is invoked.
\end{rmk}

\subsubsection{An application of the snake lemma}
There is a commutative diagram
$$
\xymatrix{
0 \ar[r] & \H^1_{\SelMin^{\Sigma}}(\cK, W) \ar[r] \ar[d]^{s} & \H^1(\cK^S/\cK, W) \ar[r] \ar[d]^{h} & \cP_{\SelMin}(W;S\backslash\Sigma) \ar[r] \ar[d]^{r} & 0 \\
0 \ar[r] & \H^1_{\SelMin^{\Sigma}}(\cK, M)^{\Gamma} \ar[r] & \H^1(\cK^S/\cK, M)^{\Gamma} \ar[r] & \cP_{\SelMin}(M;S\backslash\Sigma)^{\Gamma}& \\
} 
$$
(note that exactness of the top row follows from Proposition \ref{Selmersurjprop}), and the snake lemma yields an exact sequence
$$
0 \ra \ker(s) \ra \ker(h) \ra \ker(r) \ra \coker(s) \ra \coker(h) \ra \coker(r). 
$$
However, $\coker(h)=0$ and hence
there is an exact sequence
\begin{equation*}\label{eq:snake2}
0 \ra \ker(s) \ra \ker(h) \ra \ker(r) \ra \coker(s) \ra 0. 
\end{equation*}
It follows that 
$$
\frac{\#\H^1_{\SelMin^{\Sigma}}(\cK, M)^{\Gamma}}{\# \H^1_{\SelMin^{\Sigma}}(\cK, W)} = \frac{\# \coker(s)}{\# \ker(s)} = \frac{\# \ker(r)}{ \# \ker(h)}. 
$$
The order of $\ker(r)$ was computed in Proposition \ref{prop:kerr} while 
$$
\ker(h) = M^{G_\cK}/(\gamma-1)M^{G_\cK} = M^{G_\cK} = \H^0(\cK,W)
$$ 
which vanishes by \eqref{irredK}. It follows that 
\begin{equation}\label{eq:selmin1}
\#\H^1_{\SelMin^\Sigma}(\cK,M)^\Gamma = \#\H^1_{\SelMin^\Sigma}(\cK,W)\cdot
\# \H^0(\cK_v, W) \cdot \# \H^0(\cK_{\vbar}, W)\cdot \prod_{\substack{w\in S_p\backslash\Sigma \\ w \text{ split}}} c_{w}^{(p)}(W).
\end{equation}

It also follows from the surjectivity of \eqref{restrict-eq2} that there is an exact sequence
$$
0 \ra \H^1_{\SelMin}(\cK,W) \ra \H^1_{\SelMin^\Sigma}(\cK,W) \ra \cP_{\SelMin}(W;\Sigma) \ra 0.
$$
As $\H^1_f(\cK_w,W) = 0$ (see Section \ref{subsubsec:finite}), we have
$$
\#\H^1_{\SelMin^\Sigma}(\cK,W) = \#\H^1_{\SelMin}(\cK,W)\cdot \prod_{w\in\Sigma} \#\H^1(\cK_w,W).
$$
Combining this with \eqref{eq:selmin1} yields
\begin{equation}\label{eq:selmin2}
\#\H^1_{\SelMin^\Sigma}(\cK,M)^\Gamma = \#\H^1_{\SelMin}(\cK,W)\cdot C^\Sigma(W).
\end{equation}

\subsubsection{Proof of Theorem \ref{thm:control}: Torsionness of $X^\Sigma_{\ac}(M)$}
It follows from Proposition \ref{Sel-Sha prop} and \eqref{eq:selmin2} that $\H^1_{\SelMin^\Sigma}(\cK,M)^\Gamma$ has finite order.
Hence $X^\Sigma_{\ac}(M)_\Gamma = \Hom_\cO (\H^1_{\SelMin^\Sigma}(\cK,M)^\Gamma, L/\cO)$ has finite order. It follows
easily that $X^\Sigma_\ac(M)$ must therefore be $\Lambda$-torsion.

\subsubsection{Proof of Theorem \ref{thm:control}: determining $\#\cO/\fac^\Sigma(0)$}
To complete the proof of the control theorem, we use the relation between 
$\H^1_{\SelMin}(\cK, W)$ and $\H^1_{\SelMin^{\Sigma}}(\cK, M)^{\Gamma}$ to relate $\# \H^1_{\SelMin}(\cK, W)$ to 
$f_{\ac}(0)$. The key to the comparison is the following proposition on the non-existence of pseudo-null submodules of the 
$\Lambda$-module $X^{\Sigma}_{\ac}(M)$: 

\begin{prop}\label{nopseudonull-prop}
The Selmer module $\H^1_{\SelMin^{\Sigma}}(\cK, M)$ has no proper $\Lambda$-submodule 
of finite order. Equivalently, the $\Lambda$-module $X^{\Sigma}_{\ac}(M)$ has no non-trivial pseudo-null $\Lambda$-submodule. 
\end{prop}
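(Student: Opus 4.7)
The plan is to adapt the argument of Greenberg \cite[\S 4]{greenberg:iwasawa-elliptic} to the anticyclotomic setting. Since $\Lambda=\cO[\![\Gamma]\!]$ is a two-dimensional regular local ring, pseudo-null $\Lambda$-modules coincide with finite $\Lambda$-modules, and Pontryagin duality interchanges the two formulations of the proposition. It therefore suffices to show that $\H^1_{\SelMin^\Sigma}(\cK,M)$ admits no non-zero finite $\Lambda$-submodule.

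Suppose for contradiction that $A\subset \H^1_{\SelMin^\Sigma}(\cK,M)$ is a non-zero finite $\Lambda$-submodule. Being finite, $A$ is annihilated by $\gamma^{p^n}-1$ for some $n\geq 0$, so $A$ is contained in $\H^1(\cK^S/\cK,M)[\gamma^{p^n}-1]$. Since $M$ is $\Lambda$-cofree, $\gamma^{p^n}-1$ is surjective on $M$ with kernel $M_n:=M[\gamma^{p^n}-1]$, and the long exact sequence attached to $0\to M_n\to M\xrightarrow{\gamma^{p^n}-1}M\to 0$ identifies $\H^1(\cK^S/\cK,M)[\gamma^{p^n}-1]$ with a quotient of $\H^1(\cK^S/\cK,M_n)$. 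Let $\cK_n$ be the fixed field of $\Gamma^{p^n}$. Shapiro's lemma applied to the canonical isomorphism $M_n\cong \Ind_{G_{\cK_n}}^{G_{\cK}}W$ then identifies $\H^1(\cK^S/\cK,M_n)$ with $\H^1(\cK_n^S/\cK_n,W)$, and the $\Lambda$-stability of $A$ furnishes compatible images $A_m\subset \H^1(\cK_m^S/\cK_m,W)$ for all $m\geq n$, related by the inflation maps in the Iwasawa tower.

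The crucial input from \eqref{irredK} is that $\overline{V}^{G_{\cK_n}}=0$ for every $n$: any $G_{\cK_n}$-stable subspace of $\overline V$ is $G_{\cK}$-stable since $\cK_n/\cK$ is Galois, hence equals either $0$ or all of $\overline V$; if the latter, then $G_{\cK}$ acts on $\overline V$ through the $p$-group $\Gal(\cK_n/\cK)$, forcing a non-zero fixed vector in $\overline V$ and contradicting irreducibility. From this vanishing, standard filtration arguments using $0\to\grm^{i}T/\grm^{i+1}T\to T/\grm^{i+1}T\to T/\grm^{i}T\to 0$ show that $T^{G_{\cK_n}}=0$ and hence that $\H^1(\cK_n^S/\cK_n,T)$ is $\cO$-torsion-free for every $n$. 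A Hochschild--Serre/diagram chase, tracking the $\Lambda$-compatibility of the system $\{A_m\}$, then forces $A=0$, contradicting our assumption.

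The main technical obstacle is matching the $\Lambda$-action on $A$ with the Shapiro/inflation data across levels, in order to rule out that the finite subgroups $A_m\subset \H^1(\cK_m^S/\cK_m,W)$ are absorbed in the divisible part $\H^1(\cK_m^S/\cK_m,T)\otimes_\cO L/\cO$ (which is not killed by the torsion-freeness alone); this is where one uses that $A$ is $\Lambda$-stable and bounded in size independently of $m$. A conceptually cleaner alternative, also due to Greenberg, is to work directly with $X_{\ac}^{\Sigma}(M)$ via Poitou--Tate global duality together with a weak Leopoldt-type vanishing $\H^2(\cK^S/\cK,M)=0$ (itself a consequence of \eqref{irredK}), which permits a computation of $\mathrm{Ext}^1_\Lambda(X_\ac^\Sigma(M),\Lambda)$ ruling out pseudo-null submodules in one step.
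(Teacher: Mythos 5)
The paper's proof is much shorter and takes a different route from yours, and your version has a genuine gap at precisely the step you flag as problematic.

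The paper deduces the proposition almost immediately from Lemma~\ref{lem:trivcoinv}, which establishes the vanishing of the $\Gamma$-coinvariants $\H^1_{\SelMin^\Sigma}(\cK,M)_\Gamma = 0$. Given that, if $X \subset X^\Sigma_\ac(M)$ is a finite $\Lambda$-submodule, then $X^* = \Hom(X,L/\cO)$ is a finite-order quotient of $\H^1_{\SelMin^\Sigma}(\cK,M)$; since taking $\Gamma$-coinvariants is right exact, $X^*/(\gamma-1)X^*$ is a quotient of $\H^1_{\SelMin^\Sigma}(\cK,M)_\Gamma = 0$, hence zero, and Nakayama's lemma (applied to the finite $\Lambda$-module $X^*$ and $\gamma-1 \in \grm_\Lambda$) gives $X^*=0$, so $X=0$. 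Lemma~\ref{lem:trivcoinv} in turn rests on Proposition~\ref{Selmersurjprop} (surjectivity of the localization maps) and the vanishing of $\Sha^1_S(\cK,T)$; the heavy lifting was already done there.

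Your proposal instead tries to rebuild a tower argument \`a la Greenberg via Shapiro's lemma over $\cK_n/\cK$. The preparatory steps are fine --- pseudo-null equals finite over the $2$-dimensional regular local ring $\Lambda$, the identification of $\H^1(\cK^S/\cK,M)[\gamma^{p^n}-1]$ with a quotient of $\H^1(\cK_n^S/\cK_n,W)$, and the deduction from \eqref{irredK} that $\overline V^{G_{\cK_n}}=0$ and hence $T^{G_{\cK_n}}=0$. But the torsion-freeness of $\H^1(\cK_n^S/\cK_n,T)$ says nothing about finite subgroups of $\H^1(\cK_n^S/\cK_n,W)$, which is typically very far from torsion-free; you acknowledge this yourself and then assert that ``a Hochschild--Serre/diagram chase \dots forces $A=0$'' without supplying the chase. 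That is the entire content of the proposition, so what you have is not a proof but a statement that a proof should exist. Your alternative at the end (Poitou--Tate plus a weak Leopoldt vanishing $\H^2(\cK^S/\cK,M)=0$) is likewise only a pointer. The missing idea that actually closes the argument is to show the vanishing of the $\Gamma$-coinvariants of the Selmer module first --- that is the content of Lemma~\ref{lem:trivcoinv}, which your proposal never invokes --- and then the proposition follows from Nakayama in one line.
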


\begin{proof}
Recall that $\H^1_{\SelMin^\Sigma}(\cK, M)_{\Gamma}$ by Lemma~\ref{lem:trivcoinv}. If $X\subset X^{\Sigma}_{\ac}(M)$ is a 
$\Lambda$-submodule of finite order, then its dual $X^*$ is a finite order quotient
of $\H^1_{\SelMin^\Sigma}(\cK, M)$, so $X^*/(\gamma-1)X^*$ is a quotient of $\H^1_{\SelMin^\Sigma}(\cK, M)_\Gamma$ and therefore $0$. 
But $X^*$ is a finite $\Lambda$-module and so, by Nakayama's Lemma, $X^*=0$ and hence $X=0$.
\end{proof}

We can now establish the comparison result: 

\begin{lem}\label{SelWorder-lem}
We have
$$
\#\cO/\fac^\Sigma(0) = \#\Lambda/(T,\fac^\Sigma(T)) = \#\H^1_{\SelMin}(\cK,W)\cdot C^\Sigma(W).
$$
\end{lem}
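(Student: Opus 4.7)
The plan is to compute $\#X/TX$ for $X := X^\Sigma_\ac(M)$ in two ways and match them, where $T=\gamma-1$ is the variable in $\Lambda = \cO[[T]]$. On the one hand, Pontryagin duality identifies $X[T] = X^\Gamma$ with the dual of $\H^1_{\SelMin^\Sigma}(\cK,M)_\Gamma$ and $X/TX = X_\Gamma$ with the dual of $\H^1_{\SelMin^\Sigma}(\cK,M)^\Gamma$. The former vanishes by Lemma~\ref{lem:trivcoinv}, so $X[T]=0$; the latter has order computed in \eqref{eq:selmin2}. This gives
$$
\#X/TX = \#\H^1_{\SelMin}(\cK,W)\cdot C^\Sigma(W).
$$

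On the other hand, since $X$ is $\Lambda$-torsion with no non-trivial pseudo-null submodule (Proposition~\ref{nopseudonull-prop}), the structure theorem for finitely generated $\Lambda$-modules produces an injection into an elementary module with pseudo-null cokernel:
$$
0 \to X \to E \to Y \to 0, \qquad E = \bigoplus_i \Lambda/(f_i),
$$
where $(\prod_i f_i) = (\fac^\Sigma(T))$ and $Y$ is pseudo-null, hence finite (as $\Lambda$ is a two-dimensional regular local ring).

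The crucial step is to rule out $T \mid f_i$ for any $i$. If some $f_i$ were divisible by $T$, then $(\Lambda/(f_i))[T]$ would contain a copy of $\Lambda/(T)\cong\cO$, making $E[T]$ infinite. But the snake lemma applied to multiplication by $T$ on the sequence above, combined with $X[T]=0$, yields an injection $E[T]\hookrightarrow Y[T]$, and $Y[T]$ is finite. Hence $T\nmid f_i$ for every $i$, so $\fac^\Sigma(0)\neq 0$, $E[T]=0$, and $\#E/TE = \prod_i \#\cO/(f_i(0)) = \#\cO/(\fac^\Sigma(0)) = \#\Lambda/(T,\fac^\Sigma(T))$. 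With $X[T]=E[T]=0$, the snake-lemma sequence reduces to
$$
0 \to Y[T] \to X/TX \to E/TE \to Y/TY \to 0,
$$
and the finiteness of $Y$ forces $\#Y[T]=\#Y/TY$. Therefore $\#X/TX = \#E/TE = \#\cO/\fac^\Sigma(0)$, which, combined with the first paragraph, is the claimed chain of equalities. The main obstacle is the pseudo-null argument ruling out $T\mid f_i$; everything else is standard homological algebra over $\Lambda$.
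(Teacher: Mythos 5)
Your proof is correct and follows essentially the same route as the paper's: the structure theorem for torsion $\Lambda$-modules, Proposition~\ref{nopseudonull-prop} to get an injection into the elementary module with finite (pseudo-null) cokernel, and the snake lemma for multiplication by $T=\gamma-1$. You are more explicit than the paper in ruling out $T\mid f_i$ — you deduce $X[T]=0$ from Lemma~\ref{lem:trivcoinv} and use the resulting injection of $E[T]$ into the finite pseudo-null quotient — whereas the paper's write-up leaves this step tacit (it also follows from the already-established finiteness of $X_\Gamma$, which forces $Y_\Gamma$, hence each $\cO/(f_i(0))$, to be finite).
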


\begin{proof}
Let $X = X^{\Sigma}_{\ac}(M)$. Then $\#X_{\Gamma} = \#\H^1_{\SelMin^\Sigma}(\cK, M)^{\Gamma}$. Since $X$ is $\Lambda$-torsion, 
$X$ is pseudoisomorphic to a $\Lambda$-module $\ds Y = \prod_{i=1}^r \Lambda/(f_i)$ for distinguished polynomials $f_i$. 
By Proposition~\ref{nopseudonull-prop}, $X$ has no pseudo-null submodule so we obtain an exact sequence 
$0 \ra X \rightarrow Y \rightarrow K \rightarrow 0$, where $K$ is a pseudo-null $\Lambda$-module. 
Applying the snake lemma to the commutative diagram
\[
\xymatrix{
0 \ar[r] & X \ar[d]^{\gamma - 1} \ar[r] & Y \ar[r] \ar[d]^{\gamma -1}& K \ar[r] \ar[d]^{\gamma - 1}& 0 \\
0 \ar[r] & X \ar[r] & Y \ar[r] & K \ar[r] & 0, 
}
\]
we obtain
$$
\#X_{\Gamma} = \#Y_{\Gamma} = \prod\#\Lambda/(T,f_i(T)) = \prod\#\O/(f_i(0)) = \#\Lambda/(T,f_{\ac}^{\Sigma}(T)), 
$$
where $f_{\ac}^{\Sigma}(T) \sim f_1(T) \cdots f_r(T)$ is a generator of the characteristic ideal of $X$. We thus get that 
$$
\#\cO/\fac^\Sigma(0) = \# \H^1_{\SelMin^{\Sigma}}(\cK, W) = \# \H^1_{\SelMin}(\cK, W) \cdot C^{\Sigma}(W), 
$$
which proves the claim. 
\end{proof}

This completes the proof of Theorem \ref{thm:control}.

\subsection{Comparison with Selmer groups over $\Z_p^2$-extensions}\label{subsubsec:bigsel}
In order to deduce what we will need from the existing theorems in Iwasawa theory, it is necessary to consider Selmer groups for Galois
extensions larger than $\cK_\infty/\cK$.  Let $\bK_\infty/\cK$ be the composite of all $\Zp$-extensions of $\cK$ in $\bQ$.  
Let $\Gamma_{\cK} = \Gal(\bK_\infty/\cK) \cong \Z_p^2$. The Galois group $\Gal(\cK/\Q)$ acts
on $\Gamma_{\cK}$, which decomposes under this action as $\Gamma_{\cK} = \Gamma_{\cK}^+\oplus\Gamma_{\cK}^-$ with the superscript $\pm$ denoting the 
subgroup on which $\tau\in\Gal(\cK/\Q)$ acts at $\pm 1$; then $\Gamma_{\cK}^+$ is the kernel of the
natural surjection $\Gamma_{\cK}\twoheadrightarrow \Gamma$. 
Let $\Lambda_{\cK} = \O[\![\Gamma_\cK]\!]$. Let $\Psi_\cK \colon G_\cK\twoheadrightarrow \Gamma_\cK$ be the natural surjection; 
this is also a continuous $\Lambda_\cK^\times$-valued character of $G_\cK$.
Let
$
\CM = T\otimes_\O\Lambda_\cK^*
$
with $G_\cK$-action given by $\rho\otimes\Psi_\cK^{-1}$. Consider the Selmer module 
$\H^1_{\SelGr^{\Sigma}}(\cK, \CM)$ and its Pontrjagin dual $X^{\Sigma}_{\Gr}(\CM) = \Hom_\cO(\H^1_{\SelGr^{\Sigma}}(\cK, \CM),L/\cO)$
which is a finite $\Lambda_\cK$-module. Let
$\chr \left ( X^{\Sigma}_{\Gr}(\CM)  \right) \subset \Lambda_\cK$
be its $\Lambda_{\cK}$-characteristic ideal. Since $\Lambda_{\cK}$ is a UFD, $\ds \chr \left ( X^{\Sigma}_{\Gr}(\CM)  \right)$ is a principal ideal, 
and it is nonzero if and only if $X^{\Sigma}_{\Gr}(\CM)$ is a torsion $\Lambda_{\cK}$-module.

Let $\gamma_\pm\in\Gamma^\pm_{\cK}\cong\Zp$ be fixed topological generators; we assume that $\gamma_-\mapsto \gamma$
under the projection $\Gamma_{\cK}\twoheadrightarrow\Gamma$. We have $M = \CM[\gamma_+-1]$ and, by \eqref{irredK},
$$
\H^1(\cK^S /\cK,M) \isoarrow \H^1(\cK^S/\cK,\CM)[\gamma_+-1],
$$
which induces maps
\begin{equation}\label{bigSelmercontrol-eq}
\H^1_{\SelMin^\Sigma}(\cK,M) \hookrightarrow \H^1_{\SelGr^\Sigma}(\cK,\CM)[\gamma_+-1] \ \ \text{and} \ \  X^{\Sigma}_{\Gr}(\CM)/(\gamma_+-1)X^{\Sigma}_{\Gr}(\CM) \rightarrow X^{\Sigma}_{\ac}(M).
\end{equation}

\begin{lem}\label{bigSelmercontrol-lem}
Suppose $\Sigma$ contains all the finite places $w\nmid p$ at which $V$ is ramified. Then
the maps in \eqref{bigSelmercontrol-eq} have finite cokernel and kernel, respectively.
\end{lem}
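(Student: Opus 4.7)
The plan is as follows. The two maps in \eqref{bigSelmercontrol-eq} are mutually Pontryagin dual: under Pontryagin duality, $(\gamma_+-1)$-torsion and $(\gamma_+-1)$-coinvariants are exchanged, so $(\H^1_{\SelGr^\Sigma}(\cK,\CM)[\gamma_+-1])^\vee \cong X^\Sigma_{\Gr}(\CM)/(\gamma_+-1)X^\Sigma_{\Gr}(\CM)$, and the Pontryagin dual of an injection with finite cokernel is a surjection with finite kernel. It therefore suffices to prove the first map has finite cokernel.

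To this end, I would first use the short exact sequence $0 \to M \to \CM \xra{\gamma_+-1} \CM \to 0$ (exact on the right as $\CM$ is $\Lambda_\cK$-cofree) together with $\H^0(\cK^S/\cK,\CM)=0$ (from \eqref{irredK}) to obtain the isomorphism $\H^1(\cK^S/\cK,M) \isoarrow \H^1(\cK^S/\cK,\CM)[\gamma_+-1]$. Fitting this into the commutative diagram of defining exact sequences for the two Selmer groups and applying the snake lemma reduces the finiteness of the cokernel of the first map of \eqref{bigSelmercontrol-eq} to the finiteness of $\ker(c)$, where $c$ is the induced map on local-quotient terms $\bigoplus_{w\notin\Sigma}\H^1(\cK_w,M)/\H^1_{\SelMin}(\cK_w,M) \to \bigoplus_{w\notin\Sigma}\H^1(\cK_w,\CM)[\gamma_+-1]/\H^1_{\SelGr}(\cK_w,\CM)[\gamma_+-1]$.

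I would then analyze $\ker(c)$ by a case analysis on $w \notin \Sigma$. At $w = v$ and $w = \bar v$ the relevant local quotients already vanish (the $\SelMin$ condition at $v$ is trivial, and both conditions at $\bar v$ equal the full local $\H^1$), so there is no contribution. For split $w \nmid p$ outside $\Sigma$, both local Selmer conditions equal $\H^1_\ur$, and since $V$ is unramified at $w$ (as $\Sigma$ absorbs all ramification of $V$ away from $p$) and $\Psi_\cK$ is unramified outside $p$, so are $M$ and $\CM$; using the $I_w$-invariant long exact sequence derived from $0\to M\to\CM\to\CM\to 0$ together with the purity of $V$ (weight $-1$, forcing $V^{G_{\cK_w}}=0$ for all but finitely many $w$), one shows that $c_w$ is an isomorphism away from a finite set of exceptional $w$, each of which contributes only finitely to $\ker(c)$. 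The delicate case is non-split $w \nmid p$ outside $\Sigma$, where $\H^1_{\SelMin}(\cK_w,M)=0$ but $\H^1_{\SelGr}(\cK_w,\CM)=\H^1_\ur(\cK_w,\CM)$; here the key observation is that $\Psi|_{G_{\cK_w}}=1$---since $\tau$ fixes $w$ and acts as $-1$ on $\Gamma\cong\Zp$, conjugation forces $\Psi(\Frob_w)=\Psi(\Frob_w)^{-1}$, hence $\Psi(\Frob_w)=1$ because $p$ is odd---and combined with purity this again bounds $\ker(c_w)$ by a finite group nontrivial at only finitely many $w$. The main obstacle will be executing this non-split analysis cleanly: one must carefully track how classes in the full $\H^1(\cK_w,M)$ can map into $\H^1_\ur(\cK_w,\CM)[\gamma_+-1]$ and verify that the resulting preimage exceeds zero only by a finite amount.
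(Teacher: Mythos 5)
Your proposal contains a genuine error at the single place that actually matters, so the argument as written does not go through.

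The mistake is in the case analysis at $w = v$. You assert that because ``the $\SelMin$ condition at $v$ is trivial,'' the local quotient at $v$ vanishes. This has it backwards: in the paper's conventions (and explicitly in the definition of $\cF_S^{S'}$), ``trivial local condition'' means $\H^1_{\SelMin}(\cK_v,M)=0$, so the local quotient is $\H^1(\cK_v,M)/0 = \H^1(\cK_v,M)$, which is certainly not zero. Meanwhile the $\SelGr$ condition at $v$ is $\H^1_\ur(\cK_v,\CM)$, which is generally nonzero. So $v$ is precisely the place where the two Selmer structures genuinely disagree, and far from dropping out, the $v$-term is the \emph{only} contribution to the cokernel. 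The entire content of the paper's proof is bounding it: one writes the exact sequence
$$
\CM^{G_{\cK_v}}/(\gamma_+-1)\CM^{G_{\cK_v}} \hookrightarrow \H^1(\cK_v,M)\twoheadrightarrow \H^1(\cK_v,\CM)[\gamma_+-1]
$$
coming from $M=\CM[\gamma_+-1]$, and then proves $\CM^{G_{\cK_v}}$ is finite by adapting the argument of Case 3(b) of Proposition \ref{prop:kerr} — replacing $P_v$ there by $\ker\bigl(\Psi_\cK|_{G_{\cK_v}}\bigr)$ and using that the image of $G_{\cK_v}$ in $\Gamma_\cK$ has finite index, together with \eqref{semistable}, \eqref{pure}, and \eqref{HT}. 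This step — finiteness of $\CM^{G_{\cK_v}}$ — is the heart of the lemma and is entirely absent from your outline.

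A second, less central, concern: your treatment of the non-split $w\nmid p$ outside $\Sigma$ is vague where it should be exact. For such $w$, $V$ is unramified (by the hypothesis on $\Sigma$) and both $\Psi$ and $\Psi_\cK$ are unramified, so $M$ and $\CM$ are unramified at $w$. Since $\CM$ is $\Lambda_\cK$-cofree and $\Frob_w-1$ acts with adjoint matrix of nonzero determinant over the domain $\Lambda_\cK$ (purity of weight $-1$ gives this), $\Frob_w-1$ is surjective on $\CM$ and hence $\H^1_\ur(\cK_w,\CM)=0$; likewise $\H^1_\ur(\cK_w,M)=0$ and $\CM^{G_{\cK_w}}/(\gamma_+-1)\CM^{G_{\cK_w}} = M/(\Frob_w-1)M = 0$, so the contribution at such $w$ vanishes identically rather than merely being finite. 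Your remark about ``only finitely many $w$'' misses the point, since the global classes one compares are unramified outside a fixed finite set $S$ anyway. The first error — misidentifying the contribution at $v$ — is the one that would need to be repaired before the proof could be completed.
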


\begin{proof}
The identification $M = \CM[\gamma_+-1]$ yields a short exact sequence
$$
\CM^{G_{\cK_{v}}}/(\gamma_+-1)\CM^{G_{\cK_{v}}} \hookrightarrow \H^1(\cK_{v},M)\twoheadrightarrow \H^1(\cK_{v},\CM)[\gamma_+-1].
$$
Since the image of $G_{\cK_\bv}$ in $\Gamma_{\cK}$ has finite index,
the argument used in the Case 3(b) of the proof of Proposition~\ref{prop:kerr} to show that $M^{G_{\cK_{v}}}$ has finite order can be easily 
adapted to prove that $\CM^{G_{\cK_{v}}}$ has finite order by replacing $P_{v}$ with $\ker\Psi_K|_{G_{\cK_{v}}}$. It follows that
$$
\# \CM^{G_{\cK_{v}}}/(\gamma_+-1)\CM^{G_{\cK_{v}}} = \#\CM^{G_{\cK_{v}}}[\gamma_+-1] < \infty.
$$
Since the cokernel of the map $\H^1_{\SelMin^\Sigma}(\cK, M) \hra \H^1_{\SelGr^\Sigma}(\cK, \CM)[\gamma_+ - 1]$ is a quotient of
$\CM^{G_{\cK_{v}}}/(\gamma_+-1)\CM^{G_{\cK_{v}}}$, it is therefore finite, and so, too, is the kernel of
the dual map.
\end{proof}

\begin{coro}\label{bigSelmercontrol-cor}
Suppose $\Sigma$ contains all the finite place $w\nmid p$ at which $V$ is ramified.
Then
$$
\chr \left ( X^{\Sigma}_{\ac}(M)  \right)  \subset \chr \left ( X^{\Sigma}_{\Gr}(\CM)  \right)  \mod\, (\gamma_+-1).
$$
\end{coro}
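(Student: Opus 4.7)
The plan is to combine Lemma~\ref{bigSelmercontrol-lem} with the standard fact that characteristic ideals of torsion $\Lambda_\cK$-modules are compatible with specialization modulo the regular element $\gamma_+ - 1$. In one step, the finite-kernel map from that lemma will give an ideal inclusion between $\chr_\Lambda(X^\Sigma_\ac(M))$ and $\chr_\Lambda$ of the $(\gamma_+-1)$-coinvariants of $X^\Sigma_\Gr(\CM)$. In the other step, a specialization lemma will identify the latter with the reduction of $\chr_{\Lambda_\cK}(X^\Sigma_\Gr(\CM))$ modulo $\gamma_+-1$.

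For the first step, Lemma~\ref{bigSelmercontrol-lem} supplies a natural map
$$
\phi \colon X^\Sigma_\Gr(\CM)/(\gamma_+-1)X^\Sigma_\Gr(\CM) \longrightarrow X^\Sigma_\ac(M)
$$
with finite kernel. By Theorem~\ref{thm:control} the target is $\Lambda$-torsion, so the image (and hence the source) of $\phi$ is $\Lambda$-torsion as well. Since finite $\Lambda$-modules are pseudo-null and have unit characteristic ideal, applying multiplicativity of $\chr_\Lambda$ to the exact sequences $0 \to \ker\phi \to X^\Sigma_\Gr(\CM)/(\gamma_+-1)X^\Sigma_\Gr(\CM) \to \im\phi \to 0$ and $0 \to \im\phi \to X^\Sigma_\ac(M) \to \coker\phi \to 0$ yields
$$
\chr_\Lambda(X^\Sigma_\ac(M))\ \subset\ \chr_\Lambda\bigl(X^\Sigma_\Gr(\CM)/(\gamma_+-1)X^\Sigma_\Gr(\CM)\bigr).
$$

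For the second step, I would invoke the following specialization lemma from two-variable Iwasawa theory: if $Z$ is a finitely generated $\Lambda_\cK$-module such that $Z/(\gamma_+-1)Z$ is $\Lambda$-torsion, then $Z$ is $\Lambda_\cK$-torsion and $\chr_\Lambda(Z/(\gamma_+-1)Z) = \chr_{\Lambda_\cK}(Z) \bmod (\gamma_+-1)$. This is proved via the structure theorem: fixing a pseudo-isomorphism $0 \to A \to Z \to \bigoplus_i \Lambda_\cK/(g_i^{n_i}) \to B \to 0$ with $A$ and $B$ pseudo-null over $\Lambda_\cK$ (equivalently, finite, since $\Lambda_\cK$ is regular of Krull dimension $2$), the hypothesis forces $\gamma_+-1 \nmid g_i$ for every $i$; each summand then contributes exactly $(g_i^{n_i} \bmod (\gamma_+-1))$ to the right-hand side, and the Tor terms arising upon tensoring with $\Lambda_\cK/(\gamma_+-1) = \Lambda$ are all finite, hence pseudo-null over $\Lambda$, and so do not affect $\chr_\Lambda$. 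Applying the lemma with $Z = X^\Sigma_\Gr(\CM)$ and combining with the first step yields the corollary. The main subtlety is the careful handling of these pseudo-null contributions, which obviates the need to assume \emph{a priori} that $X^\Sigma_\Gr(\CM)$ has no pseudo-null $\Lambda_\cK$-submodule---a property not established for the Greenberg Selmer module in this paper.
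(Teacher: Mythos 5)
Your first step is fine: by Lemma~\ref{bigSelmercontrol-lem} the map $\phi$ is a surjection with finite kernel, and multiplicativity of characteristic ideals then gives $\chr_\Lambda(X^\Sigma_\ac(M))\subset \chr_\Lambda\bigl(X^\Sigma_\Gr(\CM)/(\gamma_+-1)X^\Sigma_\Gr(\CM)\bigr)$. The gap is in your second step. You assert that a finitely generated pseudo-null $\Lambda_\cK$-module is finite ``since $\Lambda_\cK$ is regular of Krull dimension $2$,'' but $\Lambda_\cK=\cO[\![\Gamma_\cK]\!]\cong\cO[\![T_1,T_2]\!]$ has Krull dimension $3$ (the one-variable algebra $\Lambda$ is the one of dimension $2$). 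Pseudo-null $\Lambda_\cK$-modules are therefore not finite in general: $\Lambda_\cK/(p,\gamma_+-1)$ is pseudo-null (its support has codimension $2$) yet infinite. This also shows that the ``specialization lemma'' you invoke is false as an equality: taking $Z=\Lambda_\cK/(p,\gamma_+-1)$ one has $\chr_{\Lambda_\cK}(Z)=\Lambda_\cK$, so $\chr_{\Lambda_\cK}(Z)\bmod(\gamma_+-1)=\Lambda$, whereas $Z/(\gamma_+-1)Z\cong\Lambda/(p)$ has $\chr_\Lambda=(p)$. So the pseudo-null errors $A$, $B$ and the Tor terms that you dismiss as ``finite, hence pseudo-null over $\Lambda$'' can in fact contribute to $\chr_\Lambda$ after reduction modulo $\gamma_+-1$, and your argument does not control them.

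Only the one-sided inclusion $\chr_\Lambda(Z/(\gamma_+-1)Z)\subset \chr_{\Lambda_\cK}(Z)\bmod(\gamma_+-1)$ is needed for the corollary, and it is true, but the clean way to see it is not the structure theorem but Fitting ideals, which is exactly what the paper does: $\mathrm{Fitt}_\Lambda(Z/(\gamma_+-1)Z)=\mathrm{Fitt}_{\Lambda_\cK}(Z)\bmod(\gamma_+-1)$ by base change, $\mathrm{Fitt}_{\Lambda_\cK}(Z)\subset\chr_{\Lambda_\cK}(Z)$, and $\chr_\Lambda$ is the smallest principal ideal containing $\mathrm{Fitt}_\Lambda$. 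This sidesteps pseudo-null submodules of $X^\Sigma_\Gr(\CM)$ entirely (which, as you correctly note, the paper does not rule out), without the false dimension count. To repair your proof you should replace your second step with this Fitting-ideal specialization, or otherwise genuinely account for the image of $A/(\gamma_+-1)A$ and the $\mathrm{Tor}_1$ term coming from $B$, rather than asserting they are pseudo-null over $\Lambda$.
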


\begin{proof}
Let $F^\Sigma_{\Gr}(\CM)$ be the $\Lambda_K$-fitting ideal of $X^\Sigma_{\Gr}(\CM)$ and $F^\Sigma_\ac(M)$ the $\Lambda$-fitting ideal of 
$X^\Sigma_\ac(M)$. Then
$F^{\Sigma}_{\Gr}(\CM)\subset \chr \left ( X^{\Sigma}_{\Gr}(\CM)  \right) $. Since the kernel of the surjection map 
$X^{\Sigma}_{\Gr}(\CM)/(\gamma_+-1)X^{\Sigma}_{\Gr}(\CM) \twoheadrightarrow X^{\Sigma}_{\ac}(M)$
has finite order and since the source has $\Lambda$-Fitting ideal equal to $F^{\Sigma}_{\Gr}(\CM)$ modulo $(\gamma_+-1)$,
there is some $c>0$ such that
$F^{\Sigma}_{\ac}(M)\grm_\Lambda^c \subset F^{\Sigma}_{\Gr}(\CM) \mod \, (\gamma_+-1)$,
where $\grm_\Lambda = (\gamma-1,\grm)$ is the maximal ideal of $\Lambda$.
It follows that
$$
F^{\Sigma}_{\ac}(M) \grm_\Lambda^c\subset \chr \left ( X^{\Sigma}_{\Gr}(\CM)  \right)  \mod\, (\gamma_+-1).
$$
Since the right-hand side is a principal ideal, it follows that $F^{\Sigma}_{\ac}(M)$
is contained in $\chr \left ( X^{\Sigma}_{\Gr}(\CM)  \right)  \mod\, (\gamma_+-1)$. And since
$\chr \left ( X^{\Sigma}_{\ac}(M)  \right) $ is the smallest principal ideal containing $F^\Sigma_\ac(M)$,
we must also have that $\chr \left ( X^{\Sigma}_{\ac}(M)  \right) $ is contained in
$\chr \left ( X^{\Sigma}_{\Gr}(\CM)  \right)  \mod\, (\gamma_+-1)$.
\end{proof}

\subsection{Applications to newforms and modular abelian varieties $A_f$}\label{subsec:newform-app}
We return to the notation of Section \ref{subsec:new-galrep-modabvar}. 
Let $f\in S_{2k}(\Gamma_0(N))$ be a newform, and let $(V_f,T_f,W_f)$ be
as in Section \ref{subsubsec:modulargalrep}.

Let
$$
(V,T,W) = (V_f(1-k),T_f(1-k),W_f(1-k)).
$$
Note that $V^*\cong V$.
It is a theorem of Saito \cite{saito}, building on work
of Deligne, Langlands, Carayol, and others, that $V$ is geometric and pure of weight $-1$.
So \eqref{geom}, \eqref{pure}, \eqref{cdual}, and \eqref{two dim} all hold for $V$. 
Furthermore, it follows that if 
\begin{equation}\label{p-sst}
\ord_p(N)\leq 1,
\tag{{$p$-sst}}
\end{equation}
then $V$ is semistable at $p$, that is, \eqref{semistable} also holds. 
As the Hodge-Tate weights of $V$ are $k-1$ and $-k$, \eqref{HT} holds if
and only if neither $k$ nor $k-1$ (when $k\neq 1$) are divisible by $p-1$;
in particular, \eqref{HT} always holds for $2k=2$.

Suppose now that $f$ has weight $2$ and that 
$$
(V,T,W) = (V_f,T_f,W_f) = (V_\grp A_f, T_\grp A_f, A_f[\grp^\infty])
$$
as in Section \ref{subsubsec:modabvar}.
The Mordell--Weil group $A_f(F)$ is a $\Z(f)$-module and
$$
\rank_{\Z} A_f(F) = [\Q(f):\Q]\cdot\rank_\O (A_f(F)\otimes_{\Z(f)}\O).
$$
Suppose
\begin{equation}\label{rank1}
\rank_\O (A_f(\cK)\otimes_{\Z(f)}\O) = 1 \tag{{rank 1}}
\end{equation}
or, equivalently, $\rank_\Z A_f(\cK) = [\Q(f):\Q]$. The Tate-Shafarevich group $\Sha(A_f/\cK)$ is also a $\Z(f)$-module and satisfies
$\Sha(A_f/\cK)[p^\infty]\otimes_{\Z(f)}\cO = \Sha(A_f/\cK)[\grp^\infty]$.
Suppose
\begin{equation}\label{Sha-finite}
\# \Sha(A_f/\cK)[\grp^\infty] < \infty \tag{{$\Sha$ $\grp$-finite}}.
\end{equation}
Note that is this case $\Sha_{BK}(W_f/\cK) = \Sha(A_f/\cK)[\grp^\infty]$.
Under the assumptions \eqref{rank1} and \eqref{Sha-finite}, it follows from the exact sequence
\begin{equation}\label{FES}
A_f(F)\otimes_{\Z(f)}L/\O\hookrightarrow \Sel_{\grp^\infty}(A_f/F)\twoheadrightarrow \Sha(A_f/F)[\grp^\infty],
\tag{{FES}}
\end{equation}
that
\eqref{crk1} holds for $\H^1_{\cF_\BK}(\cK,W_f) = \Sel_{\grp^\infty}(A_f/F)$. 
If furthermore 
\begin{equation}\label{Afirred}
\text{$A_f[\grp]$ is an irreducible $G_\cK$-representation}, 
\tag{{$\grp$-irred}}
\end{equation}
then
$$
A_f(\cK)\otimes_{\Z(f)}\O \isoarrow \H^1_f(\cK,T)\isom \O.
$$

If $w\mid p$ is a place of $\cK$ then $A_f(\cK_w)$ is a finitely generated $\Zp$-module of rank equal to $[\cK_w:\Qp]\cdot[\Z(f):\Z]$ (there
is a natural $\Z(f)$-injection of
$\Z(f)\otimes\O_{\cK_w}$ into the compact Lie group $A_f(\cK_w)$, with image having finite index; this is just the
$\exp$ map from a neighborhood of zero in the tangent space at the origin). In particular, if $p$ splits in $\cK$
(so $\Qp\isoarrow \cK_w)$, we have that $A_f(\cK_w)\otimes_{\Z(f)}L/\O \cong L/\O$.
Furthermore, since $A_f(\cK)\subset A_f(\cK_w)$, if \eqref{rank1} holds, then
we also have that
$$
A(\cK)\otimes_{\Z(f)}L/\O\cong L/\O \twoheadrightarrow A(\cK_w)\otimes_{\Z(f)}L/\O \cong L/\O.
$$
That is, \eqref{surjp} also holds. This shows that if \eqref{split}, \eqref{rank1}, \eqref{Sha-finite}, and \eqref{Afirred} hold,
then so do \eqref{crk1}, \eqref{surjp}, and \eqref{irredK}.

Still assuming that $p$ splits in $\cK$, we also have
$A_f(\cK_v)\otimes_{\Z(f)\otimes\Zp}\O\isoarrow \H^1_f(\cK_v,T)$ is a finite $\O$-module of rank one.
Let $A_f(\cK_v)_{/\tor} = A_f(\cK_v)/A_f(K_v)_\tor$ (this is then a free $\Z(f)\otimes\Zp$-module), so
$A_f(\cK_v)_{/\tor} \otimes_{\Z(f)\otimes\Zp}\O\isoarrow \H^1_f(\cK_v,T)/\H^1(\cK_v,T)_{\tor}$.
Then
\begin{equation}\label{eq:deltav}\begin{split}
\#\delta_v & = \#\coker\{\H^1_f(\cK,T)\stackrel{\loc_v}{\rightarrow} \H^1_f(\cK_v,T)/\H^1(\cK_v,T)_{\tor}\} \\
& = \#[A_f(\cK_v)_{/\tor}\otimes_{\Z(f)\otimes\Zp}\O : A_f(\cK)\otimes_{\Z(f)}\O].
\end{split}
\end{equation}
Let $P\in A_f(\cK)$ be any point of infinite order. The $\O$-module $\O\cdot P \subset A_f(\cK)\otimes_{\Z(f)}\O$
generated by $P$ has finite index, and it follows that
\begin{equation*}\label{delta-eq}
\#\delta_v = \frac{[A_f(\cK_v)_{/\tor}\otimes_{\Z(f)\otimes\Zp}\O : \O\cdot P]}{[A_f(\cK)\otimes_{\Z(f)}\O : \O\cdot P]}.
\end{equation*}
In particular, it then follows from Proposition \ref{Sel-Sha prop}
that 
\begin{equation}\label{eq:modabvar-sha}
\#\H^1_{\SelMin}(\cK,W_f) = \#\Sha_\BK(W_f/\cK) \cdot \frac{[A_f(\cK_v)_{/\tor}\otimes_{\Z(f)\otimes\Zp}\O : \O\cdot P]^2}{[A_f(\cK)\otimes_{\Z(f)}\O : \O\cdot P]^2}.
\end{equation}

Suppose now that 
\begin{equation}\label{good}
p\nmid N.
\tag{good}
\end{equation}
Then $A_f$ has good reduction at $p$ and so extends to an abelian scheme over $\Z_{(p)}$.
Let $A_f^1(\Qp)\subset A_f(\Qp)$ be the kernel of reduction modulo $p$.
Let $\Omega^1(A_f/\Z_p)^\vee=\Hom_\Zp(\Omega^1(A_f/\Zp),\Zp)$. 
The formal group logarithm defines a $\Z(f)\otimes\Zp$-isomorphism $\log:A_f^1(\Qp)\rightarrow p\Omega^1(A_f/\Zp)^\vee$, which extends to an
injective $\Z(f)\otimes\Zp$-homomorphism $\log:A_f(\Qp)_{/\tor} \hra \Omega^1(A_f/\Zp)^\vee$. 
Recall that $\cO = \Z(f)_\grp$ for a chosen prime $\grp\mid p$ of $\Z(f)$.
Let $\omega_f\in \Omega^1(A_f/\Zp)\otimes_\Zp\cO$ be an $\cO$-basis element such that the
action of each $\alpha\in\Z(f)$ on $A_f$ induces $\alpha^*\omega_f = \alpha\cdot\omega_f$ (multiplication
by the scalar $\alpha\in\cO$). Then composition of $\log$ with evaluation on $\omega_f$ defines
an $\cO$-homomorphism 
$$
\log_{\omega_f}:A_f(\Qp)_{/\tor}\otimes_\Zp\cO \rightarrow \cO
$$
that maps $A^1(\Qp)\otimes_\Zp\cO$ surjectively onto $p\cO$.
By the choice of $\omega_f$, 
the map $\log_{\omega_f}$ factors through $A_f(\Qp)_{/\tor}\otimes_{\Z(f)\otimes\Zp}\cO$.
The induced homomorphism $A_f(\Qp)_{/\tor}\otimes_{\Z(f)\otimes\Zp}\cO \hra \cO$ is injective 
and maps $A_f^1(\Qp)\otimes_{\Z(f)\otimes\Zp}\cO$ isomorphically onto
$p\cO$; this follows from $A_f(\Qp)_{/\tor}\otimes_{\Z(f)\otimes\Zp}\cO$ and $A_f^1(\Qp)\otimes_{\Z(f)\otimes\Zp}\cO$
both being free $\cO$-modules of rank one and the surjective mapping of the latter onto $p\cO$.
As $\cK_v = \Qp$ (since $p$ splits in $\cK$) it easily follows from this that 
\begin{equation*}\begin{split}
[A_f(\cK_v)_{/\tor}\otimes_{\Z(f)\otimes\Zp}\O : \O\cdot P] & = \frac{\#\cO/(\log_{\omega_f}P)}{\#\cO/(\log_{\omega_f}(A_f(\Qp)_{/\tor}\otimes_\Zp\cO))} \\
& = \frac{\#\cO/(\log_{\omega_f}P) \cdot\#(A_f(\Qp)_{/\tor}/A_f^1(\Qp) \otimes_{\Z(f)\otimes\Zp}\cO)}
{\#\cO/p\cO} \\
& = \frac{\#\cO/(\log_{\omega_f}P) \cdot\#(A_f(\Qp)/A_f^1(\Qp) \otimes_{\Z(f)}\cO)}{\#\cO/p\cO\cdot \#(A_f(\Qp)_\tor\otimes_{\Z(f)}\cO)}.
\end{split}
\end{equation*}
Since reduction modulo $p$ yields an isomorphism $A_f(\Qp)/A^1_f(\Qp)\isoarrow A_f(\F_p)$, we have
$A_f(\Qp)/A^1_f(\Qp)\otimes_{\Z(f)}\cO \isoarrow A_f(\F_p)\otimes_{\Z(f)}\cO = A_f[\grp^\infty](\F_p)$. The latter group is trivial unless $f$ is ordinary
with respect to $\grp$ (that is, $\grp\nmid a_p$) in which case it is isomorphic to $\cO/(1-a_p+p)$. Hence
$$
\#A_f(\Qp)/A^1_f(\Qp)\otimes_{\Z(f)}\cO = \#\cO/(1-a_p+p).
$$
Also, $A_f(\Qp)_{\tor}\otimes\Zp= \H^0(\Qp,A_f[p^\infty])$ so $A_f(\Qp)\otimes_{\Z(f)\otimes\Zp}\cO = \H^0(\Qp,W_f)$, hence
$$
\#A_f(\Qp)_\tor\otimes\Zp = \#\H^0(\Qp,W_f),
$$
Putting this together with the preceding formula for $[A_f(\cK_v)_{/\tor}\otimes_{\Z(f)\otimes\Zp}\O : \O\cdot P]$ we find 
\begin{equation*}
\begin{split}
[A_f(\cK_v)_{/\tor}\otimes_{\Z(f)\otimes\Zp}\O : \O\cdot P] & = \frac{\#\cO/(\log_{\omega_f}P)\cdot\#\cO/(1-a_p+p)}{\#\cO/p\cO \cdot \#\H^0(\cK_v,W_f)} \\
& = \frac{\#\cO/(({\frac{1-a_p+p}{p}})\log_{\omega_f} P)}{\#H^0(\cK_v,W_f)}.
\end{split}
\end{equation*}
Combining this last equality with \eqref{eq:deltav} we get
\begin{equation*}\label{eq:deltav2}
\#\delta_v = \frac{\#\cO/(({\frac{1-a_p+p}{p}})\log_{\omega_f} P)}{[A_f(\cK)\otimes_{\Z(f)}\cO:\cO\cdot P] \cdot \# \H^0(\cK_v,W_f)},
\end{equation*}
which, when substituted into \eqref{eq:modabvar-sha}, yields
\begin{equation}\label{eq:modabvar-sha2}
\#\H^1_{\SelMin}(\cK,W_f) = \#\Sha_\BK(W_f/\cK)\cdot 
\left ( \frac{\#\cO/(({\frac{1-a_p+p}{p}})\log_{\omega_f} P)}{[A_f(\cK)\otimes_{\Z(f)}\cO:\cO\cdot P] \cdot \# \H^0(\cK_v,W_f)}\right)^2
\end{equation}

In the special case that $A_f = E$ is an elliptic curve (i.e., $f$ has rational coefficients), $p$ is a prime of good reduction, $\cO=\Zp$, and $P\in E(\cK)$ has infinite order, 
we can rewrite the formula for $\#\delta_v$ as
\begin{equation*}\label{delta-eq-E}
\#\delta_v = \frac{\#\Zp/(({\frac{1-a_p(E)+p}{p}})\log_{\omega_E} P)}{[E(\cK):\Z\cdot P]_p\cdot\#\H^0(\cK_v,E[p^\infty])},
\end{equation*}
where $[-,-]_p$ denotes the $p$-part of the index and we have take for $\omega_f$ the N\'eron differential $\omega_E \in \Omega^1(E/\Z_{(p)})$.
In this case \eqref{eq:modabvar-sha2} then becomes
\begin{equation}\label{eq:modabvar-ec}
\#\H^1_{\SelMin}(\cK,E[p^\infty]) = \#\Sha(E/\cK)[p^\infty]
\cdot \left(\frac{\#\Zp/(({\frac{1-a_p(E)+p}{p}})\log_{\omega_E} P)}{[E(\cK):\Z\cdot P]_p\cdot\#\H^0(\cK_v,E[p^\infty])}\right)^2.
\end{equation}

%
%
\section{CM Points, CM Periods and Upper Bounds on $\Sha$}\label{bsd-cmpts}
In this section we recall the definition of Heegner points on certain Shimura curves and their Jacobians. 
The Shimura curves considered are moduli spaces for false elliptic curves and the Heegner points correspond to 
false elliptic curves with complex multiplication. These Heegner points give rise to cohomology
classes for the Tate module of an optimal quotient of the Jacobian of the Shimura curve. The Euler system method of Kolyvagin yields
an upper bound on the order of the Tate--Shafarevich group of this optimal quotient in terms of the indices of certain of these classes. 
This upper bound is recalled in Section \cite{jetchev:tamagawa}. It is an essential ingrediant in our proof of the main theorem
of this paper. For use in comparing $p$-adic $L$-functions in Section \ref{bsd-padicLfunc}, we also
explain how the complex and $p$-adic periods of false CM elliptic curves are identified with complex and $p$-adic periods
of (true) CM elliptic curves.  

Let $\cK$ be an imaginary quadratic field such that \eqref{split} holds. Let $-D_\cK < 0$ be the discriminant of $\cK$.

\subsection{The Heegner hypothesis}\label{subsec:heeghyp} 

Let $N$ be an integer. We will also assume that $N$ satisfies at least one of the two 
Heegner-type hypotheses recalled below. The first of these is:
\begin{equation}\label{H}
\begin{split}
\bullet & \ \text{$N=N^+N^-$ with $(N^+,N^-)=1$}; \\
\bullet & \ \text{$\ell\mid N^+$ if and only if $\ell$ splits in $\cK$;} \\
\bullet & \ \text{$\ell\mid N^-$ if and only if $\ell$ is inert in $\cK$;} \\
\bullet & \ \text{$N^-$ is squarefree with an even number of prime factors}.
\end{split}\tag{{H}}
\end{equation}
Implicit in the second of these assumptions is that the discriminant $-D_{\cK}<0$ of $\cK$ satisfies
\begin{equation}\label{coprime}
(N,D_{\cK})=1.\tag{{coprime}}
\end{equation}
If $N^-=1$, then \eqref{H} is just the usual Heegner hypothesis.
For some of the arguments that follow, we also need a more general Heegner-type hypothesis: 
\begin{equation}\label{gen-H}
\begin{split}
\bullet & \ \text{$N=N^+N^-$ with $(N^+,N^-)=1$}; \\
\bullet & \ \text{$\ell \mid N^+$ if and only if $\ell$ is split or ramified in $\cK$;} \\ 
\bullet & \ \text{$\ell \mid N^-$ if and only if $\ell$ is inert in $\cK$;} \\ 
\bullet & \ \text{$N^-$ is squarefree with an even number of prime factors}.
\end{split}\tag{{gen-H}}
\end{equation}
Note that the only difference between the hypotheses \eqref{H} and \eqref{gen-H} is in the latter the primes dividing $N^+$ are allowed  
to be ramified in $\cK$ (so \eqref{coprime} may not hold).

Let $f\in S_{2k}(\Gamma_0(N)$ be a newform and $\pi$ the associated cuspidal representation of $\GL_2(\A)$ as in Section \ref{subsubsec:modulargalrep}.
The third and forth assumption of either Heegner-type hypothesis implies that the epsilon factor $\eps(\pi,\cK,s)$ of the base change of $\pi$ to $\GL_2(\A_{\cK})$ satisfies
\begin{equation}\label{sign-1}
\eps(\pi,{\cK},1/2) = -1.\tag{{sign $-1$}}
\end{equation}

\subsection{The quaternion algebra $B$ and the Shimura curve $X_{N^+,N^-}$}\label{subsec:shimcurve}
Suppose $N$ satisfies \eqref{gen-H}. 
Let $B$ be the indefinite quaternion algebra of discriminant $N^-$.
Let $\O_B$ be a maximal order of $B$ and let $R \subset\O_B$ be an Eichler order of level $N^+$.
Fix an isomorphism of $\R$-algebras
\begin{equation}\label{BisoR}
B\otimes\R \isoarrow \M_2(\R)
\end{equation}
and an isomorphism of $\A^{\infty N^-}$-algebras
\begin{equation}\label{BisoAf}
B\otimes\A^{\infty N^-}\isoarrow \M_2(\A_f^{N-})
\end{equation}
that identifies $R\otimes\widehat\Z^{N^-}$ with the order
$\{\left(\smallmatrix a & b \\ c & d \endsmallmatrix\right)\in \M_2(\wZ^{N^-}) \ : \ N^+\mid c \}$.
Here, as usual, $\A^{\infty N^-}$ (resp.~$\wZ^{N^-}$) denotes the restricted product over $\Q_\ell$ (resp.~$\Zl$),
$\ell\nmid N^-$. 

In order to compare later statements with results in \cite{brooks:shimura}, we assume (without loss of generality) 
that the isomorphism \eqref{BisoR} and the fixed isomorphisms $R\otimes\Zl\isom\M_2(\Zl)$ for $\ell\mid N^+p$
all arise from the choice of a real quadratic field $M=\Q(\sqrt{p_0}) \subset B$, $p_0\nmid pND$,
in which the primes $\ell\mid pN^+$ split and from an identification 
$$
\iota_M \colon B\otimes M\isoarrow \M_2(M).
$$ 
In particular, the chosen isomorphisms are induced from $\iota_M$ by fixing an inclusion $M\hookrightarrow \R$ and a prime of
$M$ above each $\ell\mid pN^+$. Furthermore, we fix an idempotent $e\in \O_B\otimes \O_M[\frac{1}{2p_0}]$ as in 
\cite[p.7]{brooks:shimura}
such that 
$$
\iota_M(e) = \left(\smallmatrix 1 & 0 \\ 0 & 0 \endsmallmatrix\right).
$$

Let $G$ be the algebraic group over $\Q$ such that $G(S) = (B\otimes S)^\times$ for each $\Q$-algebra $S$.
Using the identification \eqref{BisoR} we define a homomorphism,
$$
h_0 \colon \Res_{\C/\R}(\G_m)\rightarrow G_{/\R}, \ \ x+iy \mapsto \left(\smallmatrix x & y \\ -y & x\endsmallmatrix\right),
$$
and let $X$ be the $G(\R)$-conjugacy class of $h_0$. The set $X$ has a natural complex structure and the map
$X\isoarrow \grh^\pm:=\C-\R$, $\Ad(g)h_0 \mapsto g(i)$, is a $G(\R)$-equivariant holomorphic isomorphism. The action of $G(\R)$
on $\grh^\pm$ is via \eqref{BisoR} and the usual action of $\GL_2(\R)$.

Let $X_{N^+,N^-}$ be the Shimura curve associated with the Shimura datum $(G,X)$ and the open compact subgroup
$K=(R\otimes\wZ)^\times\subset \wB^\times=(B\otimes\A^\infty)^\times = G(\A^\infty)$. This curve has a canonical model over $\Q$ with 
complex uniformization
\begin{equation}\label{Xcompunif}
 X_{N^+,N^-}(\C) = B^\times\backslash (X\times \wB^\times/K)
\end{equation}
It even has the structure of a coarse moduli space \cite{KatzMazur}, \cite{buzzard:integral}, \cite{Helm}; the solution of the moduli problem yields a smooth regular model over $\ds \Z[1/N^-]$ which, in particular, is smooth at $p$ 
if 
\begin{equation}\label{good}
p\nmid N
\tag{{good}}
\end{equation}
holds.

To describe the moduli problem represented by $X_{N^+,N^-}$ we recall that
a {\it false elliptic curve} over a scheme $S$ is a pair $(A,\iota)$ with $A$ an abelian surface over $S$ and
$\iota:\O_B\hookrightarrow \End_S(A)$ an injective homomorphism. A full level-$N^+$ structure on $(A,i)$ is an isomorphism
of group schemes $t \colon A[N^+]\isoarrow \O_B\otimes(\Z/N\Z)_S$ commuting with the $\O_B$-action on both sides,
and a $K$-level structure is a $K$-equivalence class of such full level $N^+$ structures. Then $X_{N^+,N^-}$ represents
the course moduli scheme over $\Z[\frac{1}{DN}]$ for the moduli problem classifying isomorphism classes of triples
$(A,\iota,t)$ where $(A,\iota)$ is a false elliptic curve over $S$ and $t$ is a $K$-level structure
\footnote{When $N^-=1$ (so $B=\M_2(\Q)$ and $G=\GL_2$)
the usual moduli interpretation of the modular curve $X_{1,N}= X_0(N)$ classifies elliptic curves $E$ together with 
a $\Gamma_0(N)$-equivalence class of isomorphisms $\alpha \colon E[N]\isoarrow (\Z/N\Z)^2_S$.
The two moduli problems are isomorphic, with the class of $(E,\alpha)$ being identified with the class of $A = E\times E$ together with
the obvious action of $\O_B=M_2(\Z)$ and the $K$-level structure $\alpha\times\alpha$.}.
In terms of the complex uniformization,
$[h,1]$, $h\in X$, represents the isomorphism class of the triple $(A_h,\iota_{\can}, t_{\can})$, where
\begin{itemize}
 \item $A_h = \left ( \O_B\otimes\R \right ) /\O_B$, where the complex structure on $\O_B\otimes\R$ is defined by right multiplication by
 $h(z)$, $z\in\C$;
 \item $\iota_\can$ is the action of $\O_B$ arising from left multiplication;
 \item $t_\can$ is the $K$-equivalence class of the canonical isomorphism $A_h[N^+] = \frac{1}{N^+}\O_B/\O_B$.
\end{itemize}

When $N^-=1$ (so $B=\M_2(\Q)$) $X_{N^+,N-}$ is not proper, but a proper model $X_{N^+.N^-}^*$ is obtained by adding cusps; this extends to a regular
model over $\Z_{(p)}$, which is still smooth if \eqref{good} holds. If $N^-\neq 1$, then $X_{N^+,N^-}$ is already proper, but to unify notation
we also write $X_{N^+.N^-}^*$ for $X_{N^+,N^-}$ in this case.

Let $\ell_0\nmid Npp_0$ be a prime (to be chosen later). Associated with $\ell_0$ is the Hecke correspondence $T_{\ell_0}$
for $\XNs$. The degree of this correspondence is $\ell_0+1$, so for any $x\in \XNs$ the divisor
$$
(T_{\ell_0}-\ell_0-1)[x] \in \mathrm{Div}^0(\XNs)
$$
has degree $0$.

Let $J(\XNs)_{/\Q}$ be the Jacobian of $\XNs$. We define a finite morphism 
\begin{equation}\label{eq:cohtriv}
\iota_{N^+,N^-}: \XNs \ra J(XNs), \ \ x\mapsto (T_{\ell_0}-\ell_0-1)[x].
\end{equation}

\begin{rmk}\label{rmk:cohtriv}
To compare with formulas in \cite{cai-tian:gz} (and ultimately with those in \cite{yuanzhangzhang:gz}), we also consider a different embedding.
Let $\delta_{N^+,N^-} \in \mathrm{Pic}(\XNs)$ be defined as follows. If $N^-=1$, then let
$m$ be an integer that annihilates the cuspidal subgroup of $\mathrm{Pic}^0(\XNs)$ (which is finite by the the theorem of Manin-Drinfeld)
and $\delta_{N^+,N^-} = m[\infty]$, where $[\infty]$ is the divisor of the cusp at infinity.
If $N^-\neq 1$, then let $\delta_{N^+,N^-}=\widetilde{\xi(\XNs)}$
be the Hodge class defined in \cite[\S 6.2]{zhang:gross-zagier} and let $m\geq 1$ be its degree. In all cases, the action of any Hecke
correspondences on $\delta_{N^+,N^-}$ is just multiplication by the degree of the correspondence (in particular, the action is
Eisenstein). We define a finite morphism (over $\Q$):
\begin{equation}\label{eq:cohtriv1}
\tilde\iota_{N^+,N^-} \colon \XNs\rightarrow J(\XNs), \ \ x \mapsto m[x]-\delta_{N^+,N^-}.
\end{equation}
As $\delta_{N^+,N^-}$ is Eisenstein we then have
\begin{equation}
\label{eq:cohtriv-relation}
(T_{\ell_0}-\ell_0-1)\cdot \tilde\iota_{N^+,N^-}(x) = m\cdot \iota_{N^+,N^-}(x).
\end{equation}
\end{rmk}

\subsection{CM points on $\XNs$}\label{subsec:cmpts}
Let $\iota_{\cK} \colon \cK\hookrightarrow B$ be an optimal embedding with respect to $R$, in the sense
that $\iota_{\cK}(\cK)\cap R = \O_{\cK}$. Each of the hypotheses \eqref{H} and \eqref{gen-H} ensures that such an embedding exist.

There exists a unique point $h\in X$ such that $h\in \grh^+$ and $\iota_{\cK}({\cK}^\times)$ fixes $h$. Moreover, the subgroup of
$B^\times$ fixing $h$ is just $\iota_{\cK}({\cK}^\times)$. Replacing the choice if $\iota_\cK$ with $\iota_\cK\circ\tau$ if necessary (this does
not change the point $h$), we may assume that the homomorphism
$h \colon \C\rightarrow G(\R)$ is such that $h(i) = \iota_{\cK}(\sqrt{-D})\otimes\frac{1}{\sqrt{D}}$
and that the action of $\iota_{\cK}(k)$, $k\in \cK^\times$, on $\ds \left[\smallmatrix h \\ 1 \endsmallmatrix\right]$ is just multiplication by 
$k$.

The set of CM points of $\XNs$ is, in terms of the complex uniformization \eqref{Xcompunif},
$$
\rCM(\XNs) = \{[h,b]\in \XN(\C) \ : \ b\in\wB^\times\}.
$$
This set does not depend on $\iota_{\cK}$ since any two embeddings ${\cK}\hookrightarrow B$ are $B^\times$-conjugate. 
In addition, the fixed optimal embedding $\iota_{\cK}$ induces a bijection 
\begin{equation}\label{eq:cm-bij}
\rCM(\XNs) \simeq \cK^\times \backslash G(\mathbb{A}_f) / \widehat{R}^\times.
\end{equation}
Shimura's reciprocity law shows that 
$\rCM(\XN)\subset \XN(\cK^{ab})$ and that the action of $\Gal({\cK}^{ab}/{\cK})$
is described in terms of the reciprocity map $\cK^\times\backslash \A_{{\cK},f}^\times\stackrel{\rec_{\cK}}{\rightarrow} \Gal({\cK}^{ab}/{\cK})$
by
$$
\rec_{\cK}(t)[h,b] = [h,\iota_{\cK}(t)b].
$$
In particular, since $\iota_{\cK}$ is an optimal embedding, the point
$x=[h,1]$
is defined over the Hilbert class field $H$ of $\cK$. Let
$$
x_{\cK}^{N^+, N^-} = \sum_{\sigma\in \Gal(H/\cK)} \iota_{N^+,N^-}(x)^\sigma \in J(\XNs)(\cK).
$$
Note that $x_{\cK}^{N^+, N^-}$ depends on the auxiliary prime $\ell_0$ (to be chosen later).

Let $f\in S_2(\Gamma_0(N)$ be a a newform. Then the modular abelian variety $A_f$ is a quotient of $J(\XNs)$. Let 
$\pi:J(\XNs)\rightarrow A_f$ be such a quotient map (in later applications we will choose a $\pi$ with nice properties at $p$).
We then obtain a Heegner point
$$
y_\cK^{N^+,N^-}  = \pi(x_\cK^{N^+,N^-}) \in A_f(K).
$$
Let $\grp$ be a prime of $\Z(f)$ containing $p$.
Suppose that
\begin{equation}\label{eq:l0good}
\grp\nmid (a(\ell_0)-\ell_0-1).
\tag{{$\ell_0$-good}}
\end{equation}
Then 
\begin{equation*}\label{eq:heegner-z}
z_\cK^{N+,N^-} = \frac{1}{a(\ell_0)-\ell_0-1}y_\cK^{N^+,N^-} \in A_f(\cK)\otimes_{\Z(f)}\Z(f)_\grp
\end{equation*}
is independent of the choice of $\ell_0$.

\begin{rmk}\label{rmk:l0good}
 If \eqref{irredK} holds for the residual representation modulo the prime $\grp$, then 
there is a positive proportion of primes $\ell_0\nmid Npp_0$ for which \eqref{eq:l0good} holds.
\end{rmk}

\subsection{Upper bounds on $\# \Sha(E/\cK)[p^\infty]$} Suppose now that $A_f=E$ is an elliptic equivalently (equivalenty, that
$\Q(f) = \Q$). Here, we recall a consequence of Kolyvagin's theorem on the structure of 
$\Sha(E/\cK)[p^\infty]$ extended to the case of elliptic curves quotients of Jacobians of Shimura curves. The following result is a direct consequence of \cite[Thm.3.2]{nekovar:euler}:   

\begin{thm}\label{thm:shaK}
Suppose that $E[p]$ is an irreducible $G_\Q$-representation.
Suppose that hypothesis \eqref{H} holds for the imaginary quadratic field $\cK$ and the conductor $N$ of $E$.
Then 
\begin{equation}
\# \Sha(E/\cK)[p^\infty] \leq p^{2m_0^{N^+, N^-}}, 
\end{equation}
where $m_0^{N^+, N^-} = \ord_p [E(\cK) : \Z y_{\cK}^{N^+, N^-}]$. 
\end{thm}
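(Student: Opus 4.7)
The plan is to apply the standard Kolyvagin Euler-system machinery in the form due to Nekov\'a\v{r}, using Heegner points on the Shimura curve $\XNs$ as the system. Since hypothesis \eqref{H} ensures that every $\ell \mid N^+$ splits in $\cK$ and every $\ell \mid N^-$ is inert, for every squarefree product $n$ of rational primes inert in $\cK$ and coprime to $N p$ there is an optimal embedding $\O_n \hookrightarrow R$ of the order of conductor $n$. The associated CM point on $\XNs$ produces, after applying $\iota_{N^+,N^-}$ and the quotient map $\pi \colon J(\XNs)\to E$, a Heegner point $y_n \in E(\cK[n])$, where $\cK[n]$ is the ring class field of conductor $n$.

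The first step is to verify that these points form an Euler system: for any Kolyvagin prime $\ell$ coprime to $n$,
$$\trace_{\cK[n\ell]/\cK[n]}(y_{n\ell}) = a_\ell(E)\cdot y_n,$$
together with the congruence $y_{n\ell} \equiv \Frob_\ell \cdot y_n$ modulo primes above $\ell$ in $\cK[n\ell]$. The proof runs parallel to the classical $X_0(N)$-case because the moduli interpretation of $\XNs$ via false elliptic curves makes the Hecke correspondence $T_\ell$ act the same way on the CM divisors, and the norm relations between CM points in towers of ring-class fields depend only on local data at $\ell$, which is unchanged by passing from $\M_2(\Q)$ to $B$ (as $\ell\nmid N^-$, $B$ is split at $\ell$).

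Next, one applies Kolyvagin's derivative operator $D_n = \prod_{\ell\mid n} D_\ell$ (with $D_\ell$ the usual derivation in $\Z[\Gal(\cK[n]/\cK[1])]$) to $y_n$ modulo $p^M$, for $M$ large enough relative to $m_0^{N^+,N^-}$. The class $c_n \in H^1(\cK, E[p^M])$ produced this way is unramified outside $n$, finite at $v\mid p$ (using the good reduction assumption on $\XNs$ at $p$), and at $\ell\mid n$ its singular part is identified (via the finite-singular comparison coming from the congruences $\ell+1\equiv a_\ell(E)\equiv 0\pmod{p^M}$ built into the choice of Kolyvagin primes) with the image of $y_{n/\ell}$ in $E(\cK_\ell)/p^M$. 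Given any $s \in \Sel_{p^M}(E/\cK)$, one chooses via Chebotarev a sequence of Kolyvagin primes whose Frobenii detect $s$; global reciprocity (Poitou--Tate) combined with the explicit description of the singular localizations then forces nontrivial divisibility of $s$ by the divisibilities of the $y_n$, culminating in a bound $\#\Sha(E/\cK)[p^\infty]\leq p^{2m_0^{N^+,N^-}}$ as $M\to\infty$.

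The main obstacle is verifying the hypotheses of Nekov\'a\v{r}'s abstract theorem, and in particular the Chebotarev step, which requires that the image of $G_\cK$ on $E[p]$ be sufficiently large: this is where irreducibility of $E[p]$ as a $G_\Q$-representation is used. Under \eqref{H} the quadratic character of $\cK/\Q$ ramifies only at primes coprime to $N$, so the restriction $\rhobar_{E,p}|_{G_\cK}$ is still irreducible (any $G_\cK$-stable line would be permuted by $G_\Q$, yielding either a $G_\Q$-stable line, contradicting irreducibility of $\rhobar_{E,p}$, or forcing $\rhobar_{E,p}$ to be induced from $\cK$, which is incompatible with the ramification profile imposed by \eqref{H}). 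Once this is in hand, the Shimura-curve version of Kolyvagin's argument proceeds exactly as in \cite[Thm.3.2]{nekovar:euler}, yielding the stated bound.
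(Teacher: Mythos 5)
Your proposal is correct and takes essentially the same approach as the paper: the paper simply notes that Theorem~\ref{thm:shaK} is a direct consequence of Nekov\'a\v{r}'s extension of Kolyvagin's method to Shimura curves (\cite[Thm.~3.2]{nekovar:euler}), and you unpack the content of that citation (the Euler system of CM points on $X^*_{N^+,N^-}$, norm relations, Kolyvagin derivative classes, finite--singular comparison, Chebotarev, and Poitou--Tate reciprocity). One small caveat: your parenthetical argument that irreducibility of $\rhobar_{E,p}$ over $G_\Q$ plus hypothesis~\eqref{H} forces irreducibility over $G_\cK$ is not quite airtight as written, since \eqref{coprime} only says $(N,D_\cK)=1$ and does not preclude $\cK/\Q$ from being ramified at $p$; the ``induced from $\cK$'' case is therefore not obviously ruled out by the ramification profile alone when $D_\cK$ is a power of $p$. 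This does not affect the validity of the result since you ultimately invoke Nekov\'a\v{r}'s theorem, whose hypotheses are what actually need to be checked, but the stated justification for this particular sub-claim should either be tightened (e.g.\ using the $p\nmid N$ and good-reduction structure of $\rhobar_{E,p}|_{I_p}$, or by invoking $p$ split in $\cK$ where that is available) or simply replaced by a direct appeal to the hypotheses of \cite[Thm.~3.2]{nekovar:euler}.
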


\begin{rmk}
In fact, one can deduce from the work of \cite{kolyvagin:structureofsha} (see also \cite{mccallum:sha}) and its generalizations to Shimura curves by Nekov\'a\v{r} \cite{nekovar:euler} a more precise result: 
\begin{equation}\label{eq:shaK-formula}
\# \Sha(E/\cK)[p^\infty] = p^{2(m_0^{N^+, N^-} - m_\infty^{N^+, N^-})}, 
\end{equation}
where $m_{\infty}^{N^+,N^-}$ is the maximal non-negative integer $m$ such that $p^{m}$ divides all the cohomology 
classes constructed by Kolyvagin (see \cite{jetchev:tamagawa} for more details in the case of modular curves). 
\end{rmk}

\subsection{Comparisons of CM periods}\label{subsec:compperiods}
In the following we recall the complex and $p$-adic periods of CM elliptic curves and explain how they can be identified with
periods of `false' CM elliptic curves.

\subsubsection{Complex and $p$-adic periods of CM elliptic curves}\label{subsubsec:cmec-per}
Suppose that $p$ splits in $\cK$. 
Let $H / \cK$ be the Hilbert class field of $\cK$ and let $F$ be a finite extension of $H$ and let $E_0 / F$ be an elliptic curve with 
complex multiplication (CM) by an order $\grO$ in the imaginary quadratic field $\cK$. We assume that $\grO\otimes\Z_{(p)}$ is a maximal 
$\Z_{(p)}$-order.
Fix a complex uniformization 
$E_0(\C) \isom \C / \mathfrak{a}$, where $\mathfrak{a}$ is a non-zero ideal of $\grO = \End(E_0)$. 
By enlarging $F$ if necessary we may assume that $E_0$ has good reduction at all primes above $p$ and so extends to an
elliptic scheme $\cE_0/\cO_{F,(p)}$.  
We are interested in $\Omega^1(\cE_0 / \cO_{F, (p)})$, which is a free $\cO_{F,(p)}$-module of rank one.
Write $\Omega^1(\cE_0 / \cO_{F, (p)}) = \cO_{F, (p)}\cdot \omega_{E_0}$ for some differential $\omega_{E_0} \in \Omega^1(\cE_0 / \cO_{F, (p)})$.

We define the complex period of $E_0$ as follows. We consider $\C$ to be an $F$-algebra via the fixed inclusion $\iota_\infty:\bQ\hra \C$. Then
$$
\Omega^1(\cE_0 / \cO_{F, (p)}) \otimes_{\cO_{F, (p)}} \C = \Omega^1(E_0 / \C).
$$ 
As the invariant differential $\omega_{\C} = dz$ on the complex torus $E_0(\C) \isom \C / \mathfrak a$ gives a generator for $\Omega^1(E_0 / \C)$, 
there exists a scalar $\Omega_\infty \in \C^\times$ such that $\omega_{E_0} = \Omega_\infty \cdot (2\pi i) \cdot \omega_\C$. 
This agrees with the definition of the complex period of the CM elliptic curve that is given in, e.g., \cite[p.1132]{bertolini-darmon-prasanna}. 

We define the $p$-adic period of $E_0$ in much the same way, following \cite[p.1134]{bertolini-darmon-prasanna}. 
Recall that $p=v\bv$ splits in $\cK$. 
We take the place $v$ to be that determined by the fixed embedding $\iota_p:\bQ\hookrightarrow \bQp$
and $\iota_v:\bQ\hra \overline{\cK}_v = \bQp$ to just be $\iota_p$. 
Denote also by $v$ the place of $F$ determined by the embedding $\iota_v$; so $\iota_v$ identifies $F_v$ with a finite extension of 
$\cK_v = \Qp$ in $\bQp$. Let $F_v^\ur$ be the maximal unramified extension of $F_v$ in $\bQp$ and let $R$ be the ring of integers of $F_v^\ur$.
Considering the good integral model $\cE_0 / R$, let $\widehat{\cE}_0$ be the formal completion over the identity section. 
As we are working over $R$, $\widehat{\cE}_0$ is non-canonically isomorphic to $\widehat{\G}_m$. 
We fix such an isomorphism $\widehat{\cE}_0 \isoarrow \widehat{\G}_m$.  
The latter is equivalent to fixing an isomorphism of $p$-divisible groups 
$\cE_0[p^\infty]^0 = \cE_0[\p_v^\infty] \isom \mu_{p^\infty}$, which is uniquely determined up to the action of an element of $\Z_p^\times$.   
Here $\p_v$ is the prime ideal of $\grO$ corresponding to $v$. The pullback of $dt/t$ under the the fixed isomorphism
$\widehat{\cE}_0 \isoarrow \widehat{\G}_m$ is an element
$$
\omega_\can \in \Omega^1(\cE_0/R) = \Omega^1(\cE/\cO_{F,(p)})\otimes_{\cO_{F,(p)}} R.
$$
We then define the $p$-adic period $\Omega_p \in R^\times$ by $\omega_{E_0} = \Omega_p \cdot \omega_{\can}$.

\begin{rmk} Both $\Omega_\infty$ and $\Omega_p$ depend on the choice of $\omega_{E_0}$. However, another choice of $\omega_{E_0}$
only replaces $\Omega_\infty$ and $\Omega_p$ with their multiples by the {\it same} scalar in $\cO_{F,(p)}^\times$.
The definition of $\Omega_\infty$ also depends on the choice of the complex uniformization. Composing with multiplication
by a non-zero scalar $\alpha\in \C^\times$:
$$
E_0(\C)\isoarrow \C/\gra \isoarrow \C/\alpha\gra,
$$
the complex period $\Omega_\infty$ gets multiplied by the scalar $\alpha$. Similarly, $\Omega_p$ also depends on the choice
of the isomorphism $\widehat{\cE}_0\cong \widehat{\G}_m$; changing this isomorphism replaces $\Omega_p$ with a $\Z_p^\times$-multiple.
\end{rmk}

\subsubsection{Complex and $p$-adic periods of ``false" CM elliptic curves}
To understand the complex and $p$-adic periods that appear in the formulas in \cite{brooks:shimura}, 
let $A$ be a ``false elliptic curve'' as in Section \ref{subsec:shimcurve}. We take $A$ and its endomorphisms to all be defined over a 
finite extension $F$ of $\cK$ containing the real quadratic field $M$. We assume that $A$ has good reduction at all places of $F$ over $p$ and we 
consider a good integral model $\cA  / \cO_{F, (p)}$. Then
$\Omega^1(\cA / \cO_{F, (p)})$ is free of rank two over $\cO_{F, (p)}$. 
Let $e\in \O_B\otimes\O_{M,(p)}$ be an idempotent as in Section \ref{subsec:shimcurve} and let $\omega_A$ be an $\cO_{F,(p)}$-generator of 
$e \Omega^1(A / \cO_{F, (p)})$, i.e., $e \Omega^1(\cA / \cO_{F, (p)}) = \cO_{F, (p)} \cdot \omega_A$. To define a complex period, consider 
$\Omega^1(A/\C) = \Omega^1(\cA/ \cO_{F, (p)})\otimes_{\cO_{F,(p)}}\C$. 
If we have a canonical basis element $\omega_{A, \C} \in e \Omega^1(A/\C)$, we will then be able to compare it against  
$\omega_{A}$ to define a complex period. We would similarly get a $p$-adic period from comparison with a canonical $R$-basis element
of $e\Omega^1(\cA/R)$.
In the next section, we explain how to obtain such canonical elements 
when $A$ is a false elliptic curve obtained from a CM elliptic curve via Serre's tensor product construction.   

\begin{rmk}
For the above, the only hypotheses on the real quadratic field $M$ that is needed is that $B\otimes M$ is split and $p$ splits in $M$.
No additional hypotheses are needed on  
the idempotent $e$. The extra hypotheses imposed in Section \ref{subsec:shimcurve} are assumed in the construction of Brooks \cite{brooks:shimura} 
where they are used to obtain a certain explicit formula for the Maass--Shimura operators; they follow closely the choices made by 
Mori \cite{mori} and Hashimoto \cite{hashimoto}. For the comparison between periods of CM elliptic curves and false elliptic curves in the next section
we do not need to make these extra assumptions. 
\end{rmk}

\subsubsection{Serre's tensor product construction}\label{subsubsec:serretensor}
To relate the periods of a CM elliptic curve to the periods of a false elliptic curve, we use Serre's tensor product construction 
(see \cite[\S 1.7.4]{chai-conrad-oort}) which we briefly recall. If $E_0/F$ is a 
CM elliptic curve as in Section~\ref{subsubsec:cmec-per} with complex multiplication by $\cO_\cK$, then by Serre's tensor product construction applied 
to $\cO_B \otimes_{\cO_\cK} E_0$, there exists a false elliptic scheme $\cA/\cO_{F,(p)}$ such that 
$\cA(R) = \cO_B \otimes_{\cO_\cK} \cE_0(R)$ for any $\cO_{F,(p)}$-algebra $R$; we let  the false elliptic curve $A/F$ be the
generic fiber of $\cA$. We will now relate the complex and $p$-adic periods of $A$ 
to the complex and $p$-adic periods of $E_0$, respectively. 

We have 
$$
\Omega^1(\cA / \cO_{F, (p)}) = \cO_B \otimes_{\cO_{\cK}} \Omega^1(\cE_0 / \cO_{F, (p)}) = 
\left ( \cO_B \otimes_{\cO_{\cK}} \cO_{F, (p)} \right ) \otimes_{\cO_{F, (p)}} \cO_{F, (p)}\cdot \omega_{E_0}. 
$$
We continue to write $e$ for the image in $\cO_B \otimes_{\cO_{\cK}} \cO_{F, (p)}$ of the idempotent $e\in \cO_B\otimes \cO_{F,(p)}$,
and let $\omega_{A} = e \otimes \omega_{E_0} \in e \Omega^1(\cA / \cO_{F, (p)})$. Then $\omega_A$ is an $\cO_{F,(p)}$-basis of
$e\Omega^1(\cA/\cO_{F,(p)})$.  On the other hand, 
$$
\Omega^1(A / \C) = \cO_B \otimes_{\cO_\cK} \Omega^1(E_0/\C) = (\cO_B \otimes_{\cO_\cK}\cO_{F,(p)}) \otimes_{\cO_{F,(p)}} \C \cdot \omega_{\C},  
$$
where $\omega_{\C} \in \Omega^1(E_0 / \C)$ is the $1$-form defined in Section~\ref{subsubsec:cmec-per}. 
Then $\omega_{A,\C} = e\otimes dz$ is a $\C$-basis of $e\Omega^1(A/\C)$. The complex period 
$\Omega_{A, \infty} \in \C$ is then defined via  
$\omega_{A} = \Omega_{A, \infty} \cdot (2\pi i) \cdot \omega_{A, \C}$. It follows that $\Omega_{A, \infty} = \Omega_\infty$.

The comparison of the $p$-adic periods of $E_0$ and $A$ is similar. We first note that over $R$ we have
$$
\cA[p^\infty]^0  = \cO_B\otimes_{\cO_\cK}\cE_0[p^\infty]^0 = \cO_B\otimes_{\cO_\cK}\cE_0[\grp_v^\infty] \isom 
(\cO_B\otimes_{\cO_\cK}\Zp)\otimes_\Zp\mu_{p^\infty},
$$
where $\Zp$ is a $\cO_\cK$-module via $\cO_\cK\hra \cO_{\cK,\grp} = \Zp$.
It follows that the pullback of $e\otimes dt/t$ is just 
$\omega_{A,\can} = e\otimes\omega_{\can} \in e\Omega(\cA/R) = e(\cO_B\otimes_{\cO_\cK}\Zp)\otimes_\Zp\Omega^1(\cE_0/R)$.
Here we have used the fixed embedding $M\hookrightarrow \Qp$ to identify $e$ with an element of $\cO_B\otimes \Zp$.
The $p$-adic period 
$\Omega_{A,p} \in R^\times$ is then defined via  
$\omega_{A} = \Omega_{A, p} \cdot \omega_{A,\can}$. Clearly, $\Omega_{A,p} = \Omega_p$.

%
%
\section{Anticyclotomic $p$-adic $L$-functions}\label{bsd-padicLfunc}
In this section we recall the $p$-adic $L$-functions that appear in the Iwasawa-Greenberg main conjectures for the Selmer groups
$\H^1_{\SelMin^\Sigma}(\cK, M)$ and $\H^1_{\SelMin^\Sigma}(\cK, \CM)$ defined in Sections~\ref{subsubsec:iwasawa-selstruct} and~\ref{subsubsec:bigsel}, respectively. These conjectures and their current status are recalled in
the section following this one.  We continue with most of the notation and conventions of the preceding discussion.

\subsection{Anticyclotomic $p$-adic $L$-functions}\label{subsec:acpadicLfunc} Let $\cK$ be an imaginary quadratic field of discriminant $-D_\cK<0$ such that $p$ splits in $\cK$.
Let $\Sigma_{cc}$ be the set of those continuous characters $\psi:G_{\cK}\rightarrow\Q_p^\times$ that satisfy
\begin{itemize}
\item $\psi^\tau = \psi^{-1}$;
\item $\psi$ factors through $\Gamma$;
\item $\psi$ is crystalline at both $v$ and $\bar v$;
\item $\psi$ has Hodge--Tate weights $-n<0$ and $n>0$ at $v$ and $\bv$, respectively.
\end{itemize}
Such a character $\psi$ is the $p$-adic avatar of an algebraic Hecke character $\psi^\alg$ of 
$\A_{\cK}^\times$ as follows.
Let $\rec_{\cK} \colon \A_{\cK}^\times\rightarrow G_{\cK}^{\ab}$
be the reciprocity map of class field theory, normalized so that uniformizers map to geometric Frobenius elements. Then $\psi^\alg$ is given by
$$
\psi^\alg(x) = \psi(\rec_{\cK}(x))x_v^{-n}x_\bv^{n}x_\infty^n\bar x_\infty^{-n}.
$$
To make sense of this expression we have to explain how this is to be seen as a $\C^\times$-valued character. The
quantity $\psi(\rec_{\cK}(x))x_v^{-n}x_\bv^{n}$, which is {\it a priori} $\Q_p^\times$-valued (since 
${\cK}_v$ and ${\cK}_\bv$ are both just $\Qp$),
actually takes values in $\bQ^\times\subset \bQp$; we use the chosen embedding $\bQ = \overline{\cK}\stackrel{\iota_v}{\hra} \overline{\cK}_v = \bQp$
to identify $\bQ^\times$ as a subgroup of $\bQ_p^\times$. We then use the fixed embedding $\bQ\subset\C$ to see
$\psi(\rec_{\cK}(x))x_v^{-n}x_\bv^{n}$ as $\C$-valued and to identify ${\cK}\otimes\R$ with $\C$.
The character $\psi^\alg$ then has infinity type $(n,-n)$ in the sense that $\psi^\alg_\infty(z) = z^n\bar z^{-n}$.

Let $f\in S_2(\Gamma_0(N))$ be a newform and let $\cO$ and $L$ be as in Section \ref{subsubsec:newforms}.
We assume that $L$ is so large that it contains the Hilbert class field of $\cK$. We also assume that \eqref{gen-H} holds for $\cK$ and $N$. 
Let $B$ be the quaternion algebra of discriminant $N^-$ and let $\O_{B,N^+}\subset \O_B$ be 
the orders as in Section \ref{subsec:shimcurve}. The Jacquet--Langlands correspondence implies the existence of a classical holomorphic quaternionic 
modular form $f_B$ (that is, a modular form for the arithmetic subgroup $\Gamma_0^B(N^+)$ defined by the units of the Eichler order $\O_{B,N^+}$)
of weight $2$ and trivial character and having the same Hecke eigenvalues as $f$ at all primes $\ell\nmid N^-$.
Let $\grp\subset \Z(f)$ be the prime determined by the fixed embedding $\Q(f)\hookrightarrow \bQ_p$ (so $L$ is a finite extension of $\Q(f)_\grp$).
We now require that \eqref{good} hold; that is, $p\nmid N$.
Then the form $f_B$ can be normalized so that it is defined over $\Z(f)_{(\grp)}$ --  that is, it is identified with a global section 
$\omega_f$ of $\Omega^1(X^*_{N^+,N^-}/\Z(f)_{(\grp)})$ -- and non-zero modulo~$\grp$.

Let $R$ be the completion of the ring of integers of the subfield of $\bQ_p$ generated by the $p$-adic field $L$ from Section~\ref{subsec:new-galrep-modabvar} and the maximal unramified
extension $\Q_p^{ur}\subset\bQp$ of $\Qp$. Note that $R$ is a complete DVR.
As explained in \cite{bertolini-darmon-prasanna}, \cite{brooks:shimura}, and \cite{burungale:mu2}, under the hypotheses \eqref{H} and \eqref{good} there
exists $L_p(f) \in \Lambda_R : = \Lambda\widehat\otimes_\O R = R[\![\Gamma]\!]$
such that for $\psi\in\Sigma_{cc}$ with $n\equiv 0 \pmod{p-1}$, the image $L_p(f,\psi): = \widehat\psi(L_p(f)) \in R$ of $L_p(f)$ under the $R$-linear
extension of the character $\psi$ to a continuous homomorphism $\widehat\psi:\Lambda_R \rightarrow R$ satisfies
\begin{equation}\label{Lp-interp}
L_p(f,\psi) = E_\bv(f,\psi)^2 \cdot t_{\cK} \cdot \frac{C(f,\psi)}{\alpha(f,f_B) W(f,\psi)} \cdot \Omega_p^{4n}\cdot \frac{L(f,\psi^\alg,1)}{\Omega_\infty^{4n}}.
\end{equation}
The various factors appearing in this expression are:
\begin{itemize}
\item $E_\bv (f,\psi) = (1-\psi^{\alg}(\varpi_\bv)a_p p^{-1} + \psi^{\alg}(\varpi_\bv)^2 p^{-1})$, with $\varpi_\bv\in \cK_\bv$ a uniformizer;
\item $t_{\cK}$ is a power of $2$ that depends only on $\cK$ (unimportant since $p$ is assumed odd);
\item $\ds C(f,\psi) = \frac{1}{4}\pi^{2n-1}\Gamma(n)\Gamma(n+1)w_\cK\sqrt{D_{\cK}}\prod_{\ell|N^-} \frac{\ell-1}{\ell+1}$,
where $w_{\cK}$ is the number of roots of unity in $\cK$;
\item $W(f,\psi) = W_f N_{{\cK}/\Q}(\grb)\psi^\alg(x_\grb) N^n b_N^{-2n}$, where
$\grb\subset \O_{\cK}$ and $b_N\in\O_{\cK}$ are as in \cite[Prop.~8.3]{brooks:shimura}, $x_\grb \in \A_\cK^{\infty,\times}$ is such that
$\ord_w(x_{\grb,w}) = \ord_w(\grb)$ for all finite places $w$ of $\cK$, and
$W_f\in\C^\times$ is a complex number of norm one as in the remarks following the proof of Lemma 8.4 of 
\cite{brooks:shimura};  both $\grb$ and $b_N$ can be taken to be coprime to $p$;
\item $\alpha(f,f_B) = \frac{\langle f,f\rangle_{\Gamma_0(N)}}{\langle f_B,f_B\rangle_{\Gamma_0^B(N^+)}}$ is a ratio of Petersson norms, which are normalized
so that  $\langle g,g\rangle_\Gamma = \int_{\Gamma\backslash\mathfrak{h}} g(z)\overline{g(z)}\frac{dx dy}{y^2}$;  in \cite{brooks:shimura} this ratio was shown to belong to 
$L$;
\item $\Omega_p\in R^\times$ and $\Omega_\infty\in\C^\times$ are, respectively, the $p$-adic and complex periods of a `false elliptic curve' with CM by $\O_\cK$
as defined at the start of \cite[\S8.4]{brooks:shimura}; as explained in Section \ref{subsubsec:serretensor} these can be taken to be the respective $p$-adic
and complex periods of an elliptic curve with complex multiplication by $\cO_\cK$; 
\item $L(f,\psi^\alg,s) = L(\pi_{\cK}\times\psi^\alg, s-1/2) = L(V_f^\vee\otimes\psi,s)$, where $\pi_{\cK}$ is the base change of $\pi$ to $\GL_2(\A_{\cK})$ and,
following our earlier convention, we use geometric conventions for the $L$-function of the $G_{\cK}$-representation $V_f^\vee\otimes\psi$.
\end{itemize}

\begin{rmk}\hfill
\begin{itemize}
\item[\rm (a)] To see the interpolated values $L_p(f,\psi)$ as belonging to $R$, one first recognizes many of the quantities on the right-hand side of \eqref{Lp-interp}
as algebraic, that is, as belonging to $\bQ\subset \C$. These are then viewed as belonging to $\bQp$ via an embedding extending the fixed inclusion
$\Q(f)\hookrightarrow L$.
\item[\rm (b)] To compare the interpolation formula \eqref{Lp-interp} with that in \cite{brooks:shimura} and \cite{burungale:mu2},
one should take $\chi^{-1}=\psi^\alg|\cdot|_{\A_\cK}$ in
the formulas in {\it loc.~cit}. Note that $\chi$ has infinity type $(-2-j,j)$ with $j=n+1$ (this is infinity type $(2+j,-j)$ with the conventions of
{\it loc.~cit.}) and $L(f,\chi^{-1},0) = L(f,\psi^\alg,1)$.
\item[\rm (c)] In \cite{bertolini-darmon-prasanna} and \cite{brooks:shimura} the $p$-adic $L$-function is only constructed as a continuous function (though the construction clearly gives a measure).
The measure is made explicit, for example, in \cite{burungale:mu2}.
\end{itemize}
\end{rmk}

In this paper we make use of two important results about $L_p(f)$, due to Burungale and Brooks, respectively.  The first of these
is the vanishing of the $\mu$-invariant of $L_p(f)$ and the second is a formula for the value
$L_p(f,1)$ under the trivial character (which does not belong to $\Sigma_{cc}$). 
We recall these results in the following two sections.

\subsubsection{Vanishing of the anticyclotomic $\mu$-invariant}

The choice of topological generator $\gamma\in \Gamma$ determines a continuous isomorphism $\Lambda_R \isoarrow R[\![T]\!]$ such that $\gamma-1\mapsto T$;
we use this to identify each $\lambda\in\Lambda_R$ with a power series $\lambda(T)$.
Each element $\lambda\in\Lambda_R$ has a unique expression $\lambda(T) = \varpi_R^{\mu(\lambda)} p_\lambda(T) u_\lambda(T)$ with
$\varpi_R\in R$ a fixed uniformizer, $\mu(\lambda)\in \Z$ a non-negative integer, $p_\lambda$ a monic polynomial of minimal degree, and $u_\lambda\in R[\![T]\!]^\times$.
The integer $\mu(\lambda)$ - the $\mu$-invariant of $\lambda$ - and the degree of $p_\lambda$ are independent of the choices of $\gamma$ and $\varpi_R$.

Under the additional assumption that
\begin{equation}\label{sqfree}
\text{$N$ is squarefree}\tag{$\square$-free}
\end{equation}
Burungale \cite{burungale:mu2} has shown that the $\mu$-invariant of $L_p(f)$ vanishes:

\begin{prop}[{\cite[Thm.~B]{burungale:mu2}}]\label{mu=0 prop} If $p\geq 3$ and \eqref{irredK}, \eqref{split}, \eqref{H}, \eqref{good}, and \eqref{sqfree} hold,  then $\mu(L_p(f))=~0$.
\end{prop}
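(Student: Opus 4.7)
The plan is to follow the Hida--Hsieh strategy for anticyclotomic $\mu$-invariant vanishing, adapted to the quaternionic BDP setting with $N^->1$. First I would realize $L_p(f)\in\Lambda_R$ as a $p$-adic measure on $\Gamma$ obtained by restricting (a suitable $p$-adic avatar of) the quaternionic form $f_B$ to the Igusa tower above the ordinary locus of $X_{N^+,N^-}$ in characteristic $p$, and then integrating along the Galois orbit of a chosen ordinary CM point $x_0$. For $\psi\in\Sigma_{cc}$, the resulting value $\widehat\psi(L_p(f))$ recovers, up to the explicit factors $E_{\bv}(f,\psi)^2$, $C(f,\psi)$, $W(f,\psi)$, $\alpha(f,f_B)$, and the period ratio $\Omega_p^{4n}/\Omega_\infty^{4n}$ of \eqref{Lp-interp}, a toric period integral of $f_B$ twisted by $\psi$.

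Next I would reduce the problem to a mod-$\varpi_R$ non-vanishing. Under the isomorphism $\Lambda_R\cong R[\![T]\!]$, writing $L_p(f)(T)=\varpi_R^{\mu(L_p(f))}\,p(T)\,u(T)$, it suffices to exhibit a single $\psi\in\Sigma_{cc}$ with $L_p(f,\psi)\not\equiv 0\pmod{\varpi_R}$. For a judicious choice of $\psi$ one checks that each of the explicit factors is a $\varpi_R$-unit: \eqref{good} ensures that $f_B$ is $\grp$-integrally normalized with at least one Fourier coefficient a unit (hence $\alpha(f,f_B)$ is a unit), the factors $E_{\bv}(f,\psi)^2$ and $W(f,\psi)$ are units for generic Hecke character $\psi$ with $p$-power conductor, and the period ratio is a unit by the comparison in Section~\ref{subsubsec:serretensor}. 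So $\mu(L_p(f))$ is concentrated entirely in the toric integral, and the question becomes whether the mod-$\varpi_R$ restriction of $f_B$ along the $\mathrm{Gal}(\cK_\infty/\cK)$-orbit of $x_0$ is identically zero.

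The core of the argument, and the main obstacle, is to rule this out via Chai--Hida rigidity. I would view $\{L_p(f,\psi)\bmod\varpi_R\}_{\psi}$ in Serre--Tate coordinates at $x_0$ as the evaluation of $f_B\bmod\varpi_R$ along the Zariski closure $Z$ of the $\mathrm{Gal}(\cK_\infty/\cK)$-orbit of $x_0$. The hypothesis \eqref{irredK} forces the $p$-adic Galois image at $x_0$ to be large enough that $Z$ is stable under a sufficiently rich collection of Hecke correspondences (acting through the action of the CM torus on the ordinary locus), and Chai's rigidity theorem for linear $\ell$-adic subgroups in Serre--Tate formal groups then forces $Z$ to contain a positive-dimensional formal subtorus. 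Combined with the non-vanishing of $f_B\bmod\varpi_R$ as a section on $X_{N^+,N^-}$ --- which follows from its $q$-expansion unit property under \eqref{good} and \eqref{sqfree} --- this produces a point of the orbit at which $f_B\bmod\varpi_R$ is non-zero, yielding the desired $\psi$. Here \eqref{H} and \eqref{sqfree} enter to guarantee the existence of the ordinary CM point $x_0$ with the correct level structure on $X_{N^+,N^-}$ and to preclude local obstructions at ramified primes of $N$ that would otherwise force a common $\varpi_R$-divisibility across the entire measure.
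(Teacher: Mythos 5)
The paper does not prove this proposition: it is stated verbatim as a citation of Theorem~B of \cite{burungale:mu2}, and the authors make no attempt to reprove it. So there is nothing internal to the paper to compare your argument against; you are in effect trying to reconstruct Burungale's proof. Your sketch correctly identifies the general shape of that proof (a Hida--Hsieh-style $\mu$-invariant argument relying on Chai's rigidity), but two of your steps would not survive scrutiny.

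First, the step ``non-vanishing of $f_B\bmod\varpi_R$ as a section on $X_{N^+,N^-}$ --- which follows from its $q$-expansion unit property'' is not available: for $N^->1$ the Shimura curve $X_{N^+,N^-}$ has no cusps, so there is no $q$-expansion and no $q$-expansion principle. This is not a cosmetic issue; the absence of cusps is precisely what makes the quaternionic $\mu=0$ problem substantially harder than the $\GL_2$ case treated by Hida and Hsieh, and finding a substitute --- working with Serre--Tate $t$-expansions at a CM point and proving a mod-$p$ non-vanishing for those, via irreducibility of the Igusa tower plus Chai's Tate-linearity theorem --- is the core technical content of \cite{burungale:mu2}. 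You have effectively assumed the hard part.

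Second, you assert that $\alpha(f,f_B)=\langle f,f\rangle_{\Gamma_0(N)}/\langle f_B,f_B\rangle_{\Gamma_0^B(N^+)}$ is a $\varpi_R$-unit. The paper only records that this ratio lies in $L$; its $p$-adic valuation is governed by level-raising congruences (Ribet--Takahashi degrees) and is not obviously zero. In the interpolation formula \eqref{Lp-interp} this ratio sits in the denominator, so its valuation directly affects $\mu(L_p(f))$. Any correct proof must either show this valuation vanishes under the stated hypotheses or normalize $f_B$ (and hence $L_p(f)$) so that the ratio is absorbed; as written, your argument just declares it a unit. The remaining assertions --- that $E_{\bv}(f,\psi)^2$ and $W(f,\psi)$ are units for well-chosen $\psi$, and that the period ratio is a unit by Section~\ref{subsubsec:serretensor} --- are fine, and the reduction of $\mu=0$ to a single unit interpolation value is standard.
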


Let $\Sigma$ be a finite set of places of $\cK$ that do not divide $p$. We also define an incomplete $p$-adic $L$-function $L^\Sigma_p(f)$ by removing the
Euler factors at those places in $\Sigma$:
$$
L_p^\Sigma(f) = L_p(f) \times \prod_{w\in\Sigma} P_w(\eps^{-1}\Psi^{-1}(\Frob_w)) \in R[\![\Gamma]\!].
$$
Then $L_p^\Sigma(f,\psi): = \widehat\psi(L_p^\Sigma(f))$, $\psi\in\Sigma_{cc}$, satisfies the interpolation formula \eqref{Lp-interp} but
with $L(f,\psi^\alg,1)$
replaced with the incomplete $L$-value $L^\Sigma(f,\psi^\alg,1)$ on the right-hand side.

\begin{rmk}
If $w=(\ell)$ is an inert place in $\cK$, then $\Psi(\Frob_w) = 1$ and in this case
$P_w(\eps^{-1}\Psi^{-1}(\Frob_w)) = (1-a_\ell(f)\ell^{-1}+\ell^{-1})(1+a_\ell(f)\ell^{-1}+\ell^{-1})$,
which can contribute to the $\mu$-invariant of $L_p^\Sigma(f)$. In particular, the $\mu$-invariant of the incomplete $L$-function $L_p^\Sigma(f)$
may be greater than that of $L_p(f)$. This does not happen for the usual cyclotomic $p$-adic $L$-function.
\end{rmk}

\subsubsection{A formula for $L_p(f,1)$} 
We recall Brooks' formula \cite[Prop.~8.13]{brooks:shimura} 
(see also \cite[Prop.2.6.1]{skinner:conversegz}) 
for the value $L_p(f,1)$ of $L_p(f)$ at the trivial character (which is outside the range 
of interpolation). In order to this, suppose that $N$ and $\cK$ satisfy \eqref{gen-H}. 
Recall that there is a logarithm map 
$$
\log_{\omega_f}: J(X_{N^+,N^-}^*)(\cK_v)\otimes_\Zp\cO \rightarrow \cK_v
$$ 
such that $d\log_{\omega_f} = \omega_f \in \Omega^1(J(X_{N^+,N^-}^*)/\Zp)\otimes_\Zp\cO = \Omega^1(J(X_{N^+,N^1}^*/\cO)$.
Recall that $\omega_f\in  \Omega^1(J(X_{N^+,N^-}^*)/\Zp)\otimes_\Zp\cO$ is the $\cO$-basis element associated with $f_B$,
as defined above.  Let $\ell_0\nmid pN$ be a prime that splits in $\cK$ and such that $1-a_{\ell_0} + \ell_0\neq 0$ (there
are a positive proportion of such $\ell_0$). In Section \ref{subsec:cmpts} we defined a Heegner point 
$x_{\cK}^{N^+,N^-} \in J(X_{N^+,N^-}^*)(\cK)$. Then Brooks' formula is:

\begin{prop}[{\cite[Prop.~8.13]{brooks:shimura}}]\label{prop:brooks} Suppose \eqref{sqfree} holds and $N$ and $\cK$ satisfy \eqref{gen-H}. Then  
$$
L_p(f, 1) = \frac{1}{(1-a_{\ell_0}+\ell_0)^2}\cdot \left ( \frac{1 + p - a_p}{p}\right )^2  \left( \log_{\omega_f} x_\cK^{N^{+}, N^-} \right )^2,  
$$
where the equality is up to a $p$-adic unit. 
\end{prop}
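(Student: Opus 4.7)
The plan is to follow the Bertolini--Darmon--Prasanna strategy for $p$-adic $L$-functions of Heegner type, adapted to the Shimura-curve setting exactly as in Brooks \cite{brooks:shimura}. One first unwinds $L_p(f)$ as a $p$-adic measure on $\Gamma$ arising from integration of a $p$-depleted modular form $f_B^{(p)}$ (obtained from the quaternionic form $f_B$ via removal of its $U_p$-component) against the CM points on $X_{N^+,N^-}^*$ furnished by $\iota_\cK$. For characters $\psi\in\Sigma_{cc}$ of infinity type $(n,-n)$ with $n\geq 1$, algebraic Maass--Shimura operators act $(n-1)$ times on $f_B$ before evaluation at CM points, reproducing the classical Waldspurger-type toric integral and yielding the interpolation formula \eqref{Lp-interp}. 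The trivial character lies at the boundary of $\Sigma_{cc}$ (the case $n=0$), and the strategy is to compute $L_p(f,1)$ by $p$-adic continuity.

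Second, I would replace the Maass--Shimura operator at the boundary by its natural $p$-adic analogue: in the limit $n=0$, rather than applying a differential operator, one antidifferentiates, and the antiderivative picks up a Coleman primitive of $\omega_f$ on the ordinary locus of $X_{N^+,N^-}^*$. Evaluated on the degree-zero divisor underlying $x_\cK^{N^+,N^-}$, this Coleman primitive is precisely the formal-group logarithm on $J(X_{N^+,N^-}^*)$ associated with $\omega_f$, i.e. $\log_{\omega_f}$. This identifies the naive $p$-adic value as $c_0 \cdot (\log_{\omega_f} x_\cK^{N^+,N^-})^2$ for some explicit constant $c_0$; the square reflects that the measure defining $L_p(f)$ integrates against a \emph{pair} of CM cycles (essentially the two-variable nature of the construction, with the $\bar v$-variable contributing a second $v$-logarithm after applying the $\tau$-involution).

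Third, one reads off the two correction factors from the local analysis. The Euler factor $\bigl((1+p-a_p)/p\bigr)^2$ arises directly from specializing the factor $E_{\bar v}(f,\psi)^2$ in \eqref{Lp-interp} to $\psi=1$: one gets $1 - a_p/p + 1/p = (1+p-a_p)/p$, squared. The factor $(1-a_{\ell_0}+\ell_0)^{-2}$ comes from the cohomological normalization: the Heegner divisor is defined via $\iota_{N^+,N^-}(x)=(T_{\ell_0}-\ell_0-1)[x]$, and on the $f$-isotypic quotient $T_{\ell_0}$ acts by $a_{\ell_0}$, so $\log_{\omega_f} x_\cK^{N^+,N^-} = (a_{\ell_0}-\ell_0-1)\cdot\log_{\omega_f}\bigl(\sum_\sigma[x]^\sigma\bigr)$; since the $p$-adic $L$-function is naturally written in terms of the unnormalized Heegner logarithm, this pulls out the inverse square $(1-a_{\ell_0}+\ell_0)^{-2}$.

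The hard step is the boundary-continuity argument in paragraph two: making rigorous the limit $n\to 0$, replacing algebraic Maass--Shimura by Coleman integration, and identifying the resulting primitive with $\log_{\omega_f}$ on $J(X_{N^+,N^-}^*)$. Concretely this rests on the Serre--Tate-type $p$-adic uniformization of the ordinary locus of $X_{N^+,N^-}^*$ and on a careful treatment of the idempotent $e\in\O_B\otimes\O_{M,(p)}$ so that the false-elliptic-curve periods $\Omega_p$ appearing in \eqref{Lp-interp} collapse to $p$-adic units at $\psi=1$ (as explained in Section \ref{subsubsec:serretensor}) and do not leave residual period contributions. Once this analytic core is in place, the remaining steps are formal manipulations of the interpolation formula and of the Jacquet--Langlands-transferred form $f_B$.
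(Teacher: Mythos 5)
The paper does not actually prove this proposition: it is a direct citation of Brooks' Proposition~8.13, and the only thing the paper supplies is the remark immediately after the statement, translating Brooks' normalization (the point $\tilde x_\cK^{N^+,N^-}=\sum_\sigma\epsilon_f[x]^\sigma$, which already lies in the $f$-isotypic component via the Hecke projector $\epsilon_f$) into the paper's normalization $x_\cK^{N^+,N^-}=\sum_\sigma(T_{\ell_0}-\ell_0-1)[x]^\sigma$, whence the extra $(1-a_{\ell_0}+\ell_0)^{-2}$. You are attempting something more ambitious: a sketch of Brooks' own proof, following BDP. That is a legitimate route and the overall shape is right, but a few places in your sketch are imprecise and, more importantly, you explicitly flag the analytic core as unproved, so this is a strategy outline rather than a proof.

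On the specific points. The framing of the trivial-character evaluation as "$p$-adic continuity in the limit $n\to 0$" is misleading: the interpolation formula \eqref{Lp-interp} involves the period ratio $\Omega_p^{4n}/\Omega_\infty^{4n}$, which does not degenerate sensibly to anything at $n=0$, and BDP/Brooks do not pass to a limit. Rather, the $p$-adic $L$-function is a single element of $\Lambda_R$ given by an explicit moment construction against the $p$-depleted form $f_B^{[p]}$; evaluating at the trivial character is a direct geometric computation in which the Coleman primitive (the ``$(-1)$-st Maass--Shimura derivative'') appears because the relevant $p$-adic modular form is $\theta^{-1}f_B^{[p]}$, not $\theta^{n-1}f_B^{[p]}$. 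Second, $\log_{\omega_f}\bigl(\sum_\sigma[x]^\sigma\bigr)$ is not literally defined, since this divisor has degree $h_\cK\neq 0$; it only makes sense after applying the idempotent $\epsilon_f$ (equivalently, after the $T_{\ell_0}-\ell_0-1$ trivialization), which is why Brooks states his formula for $\tilde x_\cK^{N^+,N^-}$ and why the paper only needs a one-line comparison. Third, the square in the formula is not naturally explained by ``integrating against a pair of CM cycles''; in the interpolation range it comes from the Waldspurger factorization of $L(f,\psi^\alg,1)$ into a square of toric periods, and the $p$-adic analogue at $\psi=1$ replaces the toric period by $\log_{\omega_f}$ of the Heegner point. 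Your identification of $E_{\bar v}(f,1)=(1+p-a_p)/p$ is correct and the $\ell_0$-factor bookkeeping (up to the harmless sign $a_{\ell_0}-\ell_0-1=-(1-a_{\ell_0}+\ell_0)$, which is invisible after squaring) is correct.
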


\noindent 
Brooks' formula is actually given in terms of the logarithm of a point 
$$
\tilde x_\cK^{N^+,N^-} = \sum_{\sigma\in\Gal(H/\cK)}\epsilon_f [x]^\sigma \in J(X_{N^+,N^-}^*)(\cK')\otimes\Z(f),
$$ 
where $[x]$ is as in the definition of $x_\cK^{N^+,N^-}$ but $\epsilon_f$ is a certain projector in a ring correspondences. 
The relation with $x_\cK^{N^+,N^-}$ is  $\eps_f\cdot x_\cK^{N^-,N^-} = (1-a_{\ell_0}+\ell_0) \cdot \tilde x_\cK^{N^+,N^-}$ from which we find
$$
 \log_{\omega_f} x_\cK^{N^{+}, N^-} =  \log_{\omega_f} \eps_f\cdot x_\cK^{N^{+}, N^-} = (1-a_{\ell_0}+\ell_0) \cdot \log_{\omega_f} \tilde x_\cK^{N^{+}, N^-}.
$$
Additional comments on the comparison of these cohomological trivializations can be found in \cite[p.13]{skinner:conversegz}.

We now rewrite Brook's formula in a form more directly comparable to the expression \eqref{eq:modabvar-sha2} at the end of Section 3.
We do this in the case that $\overline V$ is an irreducible $\kappa$-representation of $G_\Q$.
Then we can -- and do -- take $A_f$ and the quotient map $\pi:J(X_{N^+,N^-}^*)\ra A_f$ to be $(\Z(f),\grp)$-optimal
in the sense of \cite{zhang:selheeg}. This means that $\pi$ is the composition of an optimal quotient $\pi_0:J(X_{N^+,N^-}^*)\rightarrow A_0$
and an isogeny $\phi:A_0\rightarrow A_f$ such that the image of the map $T_p A_0 \rightarrow T_p A_f$ induced by $\phi$ is not contained in 
$\grp T_p A_f$.  We note that \eqref{irredK} also holds and that we may choose $\ell_0\nmid pN$ such that $\ell_0$ splits in $\cK$ and
\eqref{eq:l0good} holds. Then the formula in Proposition \ref{prop:brooks} can be expressed as:

\begin{prop}\label{prop:brooks-Af}
Suppose \eqref{good} and \eqref{irredK} hold. 
Then with the above choice of $A_f$ and $\pi$, up to a unit in $\cO$ we have that $L_p(f,1)$ equals
$$
\left ( \frac{1 + p - a_p}{p}\right )^2  \log_{\omega_{A_f}} \left (z_\cK^{N^{+}, N^-} \right )^2,
$$
up to a $p$-adic unit.
\end{prop}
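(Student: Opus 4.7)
The plan is to derive Proposition~\ref{prop:brooks-Af} directly from Brooks' formula (Proposition~\ref{prop:brooks}) by translating the expression $\log_{\omega_f} x_\cK^{N^+,N^-}$ on the Jacobian into the corresponding quantity on the modular abelian variety $A_f$. First I will recall the two key relations: from Section~\ref{subsec:cmpts},
\[
y_\cK^{N^+,N^-} = \pi(x_\cK^{N^+,N^-}) \quad\text{and}\quad z_\cK^{N^+,N^-} = \tfrac{1}{a(\ell_0)-\ell_0-1}\,y_\cK^{N^+,N^-},
\]
so that $\pi(x_\cK^{N^+,N^-}) = (a(\ell_0)-\ell_0-1)\,z_\cK^{N^+,N^-}$ inside $A_f(\cK)\otimes_{\Z(f)}\cO$. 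Note that \eqref{eq:l0good} is in force by the choice of $\ell_0$ and Remark~\ref{rmk:l0good}, using \eqref{irredK}.

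Next I will invoke the functoriality of the formal-group logarithm: for any differential $\eta \in \Omega^1(A_f/\Zp)\otimes_\Zp\cO$ and any point $P\in J(X_{N^+,N^-}^*)(\cK_v)$,
\[
\log_{\pi^*\eta}(P) \;=\; \log_{\eta}(\pi(P)).
\]
Combining these two observations gives
\[
\log_{\pi^*\omega_{A_f}}(x_\cK^{N^+,N^-}) \;=\; (a(\ell_0)-\ell_0-1)\cdot \log_{\omega_{A_f}}(z_\cK^{N^+,N^-}).
\]
Substituting the square of this identity into the formula of Proposition~\ref{prop:brooks}, the factor $(1-a_{\ell_0}+\ell_0)^{-2}$ cancels against $(a(\ell_0)-\ell_0-1)^2$, and we obtain the desired expression, provided that $\omega_f$ equals $\pi^*\omega_{A_f}$ up to a $p$-adic unit.

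The principal step, and the main obstacle, is therefore to verify this last identification of differentials. Here I would use the $(\Z(f),\grp)$-optimality hypothesis on $\pi = \phi\circ\pi_0$. By construction, $f_B$ is normalized so as to give a generator $\omega_f$ of $\Omega^1(J(X_{N^+,N^-}^*)/\Z(f)_{(\grp)})[\pi_f]$ that is nonzero modulo~$\grp$, where $\pi_f$ is the kernel of the Hecke action associated to $f$. For the optimal quotient $\pi_0:J(X_{N^+,N^-}^*)\twoheadrightarrow A_0$, pullback induces an isomorphism $\pi_0^*:\Omega^1(A_0/\Z(f)_{(\grp)})\isoarrow \Omega^1(J/\Z(f)_{(\grp)})[\pi_f]$, so there exists an $\cO$-basis element $\omega_{A_0}$ of $\Omega^1(A_0/\Z(f)_{(\grp)})\otimes\cO$ with $\pi_0^*\omega_{A_0} = \omega_f$. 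The $(\Z(f),\grp)$-optimality of $\phi:A_0\to A_f$ then means precisely that the induced map $\phi^*:\Omega^1(A_f/\Zp)\otimes_\Zp\cO \to \Omega^1(A_0/\Zp)\otimes_\Zp\cO$ is a $p$-adic unit multiple of an isomorphism, so that $\omega_{A_f}$ can be chosen so that $\phi^*\omega_{A_f}$ agrees with $\omega_{A_0}$ up to a unit in $\cO^\times$. Composing, $\pi^*\omega_{A_f} = \phi^*\pi_0^*\omega_{A_f}$ agrees with $\omega_f$ up to a $p$-adic unit, which completes the argument.
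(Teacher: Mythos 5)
Your reduction to the statement ``$\pi^*\omega_{A_f}$ is an $\cO^\times$-multiple of $\omega_f$'' is exactly the paper's reduction, and the preliminary manipulations with $\log$, $\ell_0$, and $z_\cK^{N^+,N^-}$ are correct. The gap is in the final paragraph, in the sentence asserting that $(\Z(f),\grp)$-optimality ``means precisely that the induced map $\phi^*:\Omega^1(A_f/\Zp)\otimes_\Zp\cO\to\Omega^1(A_0/\Zp)\otimes_\Zp\cO$ is a $p$-adic unit multiple of an isomorphism.'' That is not the definition: $(\Z(f),\grp)$-optimality is a condition on Tate modules, namely that $\phi_*(T_pA_0)\not\subset\grp T_pA_f$. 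Passing from this Tate-module condition to the statement about $\phi^*$ on integral differentials is precisely the nontrivial content of the proposition, and it is where the hypothesis \eqref{irredK} is actually needed (you invoke \eqref{irredK} only to justify the choice of $\ell_0$, not here). The paper's proof makes this step explicit: it reduces modulo $p$ to $\mathrm{Lie}_{\F_p}(\cA_f[\grp^{e_\grp}]^0)^\vee$, uses irreducibility of $A_f[\grp]$ together with the Tate-module condition to show that $A_0[p]\to A_f[\grp^{e_\grp}]$ is surjective, and then passes to the Lie algebras of the connected parts of the $p$-divisible groups. Without some such argument your claim about $\phi^*$ is unproved.

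A secondary, smaller issue: your claim that $\pi_0^*$ gives an isomorphism $\Omega^1(A_0/\Z(f)_{(\grp)})\isoarrow\Omega^1(J/\Z(f)_{(\grp)})[\pi_f]$ onto the full $\pi_f$-torsion of the integral differentials is stronger than what is needed and is not justified; over $\cO$ the $f$-isotypic piece of $\Omega^1(\mathcal J/\Zp)\otimes\cO$ need not coincide a priori with the image of $\pi_0^*$ as a saturated submodule. The paper instead appeals to \cite[Thm.~4, p.~187]{neronmodels} (this is where the hypothesis $p\geq 3$, i.e.\ $p-1>1$, is used) to conclude only that the image of $\pi_0^*:\Omega^1(\cA_0/\Zp)\to\Omega^1(\mathcal J/\Zp)$ is a $\Zp$-direct summand, which is the precise statement needed to propagate primitivity from $\phi^*\omega_{A_f}$ to $\pi^*\omega_{A_f}$.
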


\begin{proof} To prove this proposition it suffices to prove that $\pi^*(\omega_{A_f})$ is an $\cO^\times$-multiple of $\omega_f$.
By the hypothesis \eqref{good}, each of $J(X_{N^+,N^-})$, $A_0$, and $A_f$ has good reduction at $p$.
Let $\mathcal{J}$, $\cA_0$, and $\cA$ be their respective N\'eron models over $\Zp$; these are abelian schemes. 
The maps $\pi$, $\pi_0$, and $\phi$ extend to morphisms of these N\'eron models. Furthermore, as $\pi_0$ is an optimal quotient
(so the kernel of $\pi_0$ is also an abelian variety) and since $p-1>1$, the image of 
$\pi_0^*:\Omega^1(\cA_0/\Zp)\rightarrow \Omega^1(\mathcal{J}/\Zp)$ is a $\Zp$-direct summand; this follows from
\cite[Thm.~4, p.~187]{neronmodels} (cf. \cite[Cor.~A.1]{abbes-ullmo:manin}). 
So it suffices to show that $\phi^*(\omega_{A_f})$ is part of an $\cO$-basis of $\Omega^1(\cA_0/\Zp)\otimes_\Zp\cO$. 

We have that 
$$
\Omega^1(\cA_f/\Zp)\otimes_\Zp\cO/p = \Omega^1(\cA_f/\F_p)\otimes_\Zp\cO/p = \mathrm{Lie}_{\F_p}(\cA_f[p]^0)^\vee\otimes_\Zp \cO/p.
$$
Using the prime factorization $p\Z(f) = \prod_{\grq\mid p} \grq^{e_\grq}$, the right-hand side can be written as
$$
\prod_{\grq\mid p} \mathrm{Lie}_{\F_p}(\cA_f[\grq^{e_\grq}]^0)^\vee\otimes_\Zp \cO/p.
$$
By the choice of $\omega_{A_f}$, 
the image of $\omega_{A_f}$ in $\Omega^1(\cA_f/\Zp)\otimes_\Zp\cO/p$ is identified with a 
$\cO/p$-basis element of the $\grp$-summand (that is, for $\grq=\grp$). So it suffices
to prove that $\phi^*$ induces an injection
$$
\mathrm{Lie}_{\F_p}(\cA_f[\grp^{e_\grp}]^0)^\vee\otimes_\Zp \cO/p \hookrightarrow \mathrm{Lie}_{\F_p}(\cA_0[p]^0)^\vee\otimes_\Zp \cO/p.
$$
For this, we note that since the image of $T_p A_0$ is not contained in $\grp T_p A_f$, 
and since $A_f[\grp]$ is an irreducible $\kappa$-representation of $G_\Q$, the induced homomorphism
$$
A_0[p] = T_p A_0/pT_pA_0 \rightarrow T_pA_f/\grp^{e_\grp} T_p A_f = A_f[\grp^{e_\grp}]
$$
is surjective. Hence the morphism $\cA_0[p]\ra \cA_f[\grp^{e_\grp}]$ is surjective and so, too, is the induced
map of the Lie algebras of their connected subgroups.
\end{proof}

\subsection{The two-variable Rankin--Selberg $p$-adic $L$-function}\label{subsec:twovarpadicLfunc}
The $p$-adic $L$-function $L^\Sigma_p(f)$ is the specialization of a $p$-adic $L$-function $\CL_p^\Sigma(f)\in\Lambda_{\cK,R}: = R[\![\Gamma_{\cK}]\!]$ first constructed by
Hida \cite{hida:padic2}. In what follows we recall this $p$-adic $L$-function and explain its relation to $L_p^\Sigma(f)$.

Let $\Lambda^+ = \O[\![\Gamma_\cK^+]\!]$. Let
$$
u = \eps(\gamma^+).
$$
We say that a continous $\O$-homomorphism $\lambda:\Lambda^+\rightarrow \bQp$ is algebraic of weight $n$ if $\lambda(\gamma_+) = u^{-n}$
for some integer $n\geq 0$.
Let $\bK_\infty^v\subset \bK_\infty$ be the maximal subfield unramified at $v$, and let $\Gamma_v = \Gal(\bK_{\infty}^v/\cK)$. Then $\Gamma_v\cong\Zp$.
Let $\CI = \O[\![\Gamma_v]\!]$. The composite of the canonical homomorphisms $\Gamma_\cK^+\hookrightarrow \Gamma_\cK\twoheadrightarrow\Gamma_v$ makes $\CI$ into
a $\Lambda^+$-homomorphism. We say that a continuous $\O$-homomorphism $\lambda:\CI\rightarrow \bQp$ is algebraic of weight $k=2n+1\geq 1$ if
its restriction to $\Lambda^+$ is algebraic of weight $n$.

Let $\Theta_v \colon \A_\cK^\times\rightarrow \Gamma_v$ be the composition of the reciprocity map $\rec_\cK:\A_\cK^\times\rightarrow G_\cK^{\ab}$
of class field theory with the canonical projection $G_\cK^{\ab}\twoheadrightarrow \Gamma_v$.
For each non-zero fractional ideal $\gra$ of $\cK$, we
let $x_{\gra}\in \A_{\cK}^{\infty,\times}$ be a finite idele of $\cK$ such that $\ord_w(x_{\gra,v}) = \ord_w(\gra)$ for all finite places $w$ of $\cK$.
Then the formal $q$-expansion
$$
\ff = \sum_{n=1}^\infty \bb(n) q^n \in \CI[\![q]\!], \ \ \bb(n) = \sum_{\gra \subset \O_\cK,  N(\gra)=n \atop (\gra,\grp_\bv)=1,} \Theta_v(x_\gra),
$$
is an ordinary $\CI$-adic eigenform of tame level $D_\cK$, in the sense that
if $\lambda:\CI\rightarrow \bQp$ is an algebraic homomorphism of weight $k=2n+1$ with $n\equiv 0\pmod{p-1}$, then
$f_\lambda = \sum_{n=1}^\infty \lambda(\bb(n))q^n$ is the $q$-expansion of a $p$-ordinary $p$-stabilized newform of weight $k$ and level $D_\cK p$.
The form $f_\lambda$ can be identified as follows. The condition that $\lambda$ be algebraic of weight $k$ implies that
$\rho_\lambda = \lambda|_{\Gamma_v}$, viewed as a continous $\bQ_p^\times$-valued character of $G_\cK$, has Hodge-Tate weights $0$ and $k-1$
at $v$ and $\bv$, respectively. Associated with $\rho_\lambda$ is an algebraic Hecke character $\rho_\lambda^\alg$ with
infinity type $(0,1-k)$ that is unramified at $v$ and $\bar v$. The ordinary eigenform $f_\lambda$ is the ordinary $p$-stabilization 
of the newform $f_\lambda^0$ of weight $k$ and level $D_\cK$ associated with $\rho_\lambda^\alg$ (so, in particular,
$L(f_\lambda^0,s) = L(\rho_\lambda^\alg,s)$).

Let $\Lambda^\Hida=\O[\![1+p\Zp]\!]$ (this is denoted $\Lambda$ in \cite{hida:padic2}). We give $\Lambda^+$ the structure of a $\Lambda^\Hida$-algebra
by $1+\Zp\ni u\mapsto u^{-1}\gamma_+^{-1}\in\Lambda^+$; this is an isomorphism. We also endow $\CI$ with the structure of a $\Lambda^\Hida$-algebra by
$1+\Zp\ni u\mapsto u\gamma_+^{-2}\in \CI$. With the latter structure, our definition of an algebraic homomorphism of weight $k$ is consistent
with that in \cite{hida:padic2}. Note that $\CI$ is a finite, flat $\Lambda^\Hida$-algebra.

For a $p$-adic $\O$-algebra $A$,
let $\CX(A) = \Hom_{\O-\alg, cts}(A,\bQp)$ be the set of all continuous $\O$-algebra homomorphisms.
Hida \cite[Thm.~5.1b]{hida:padic2} has shown that there is an element $D$ in the fraction ring of $\CI\widehat\otimes_\O \Lambda^+$ that,
when viewed as a $p$-adic analytic function on $\CX(\CI)\times\CX(\Lambda^+)$, has the following properties.
The function $D$ is finite at each point $(\lambda,\lambda') \in \CX(\CI)\times\CX(\Lambda^+)$ with $\lambda$ and $\lambda'$
algebraic homomorphisms of respective weights $k$ and $n$ and satisfying $1\leq n\leq k-1$, and
if $k \equiv 1\pmod{2(p-1)}$ and $n\equiv 0 \pmod{p-1}$, then
\begin{equation}\label{Dp-interp}\begin{split}
D(\lambda,\lambda') = (D_\cK N)^{k/2} & N^{n}\Gamma(n)\Gamma(n+1) W_{f} 
\frac{E(1+n)}{S(\lambda)} \frac{\CalD(1+n,f_\lambda,f)}{\Omega(f_\lambda,f,n)},
\end{split}
\end{equation}
where
\begin{itemize}
\item $S(\lambda) = (1-p^{k-1}\rho_\lambda^\alg(\varpi_v)^{-2})(1-p^{k-2}\rho_\lambda^\alg(\varpi_v)^{-2})$;
\item $E(s) = (1-\rho_\lambda^\alg(\varpi_v)a_p(f)p^{-s} + \rho_\lambda^\alg(\varpi_v)p^{1-s})$;
\item $\CalD(f_\lambda,f,s) = \zeta^{pND_\cK}(k+2-2s-2) \times \sum_{n=1}^\infty a_n(f_\lambda)a_n n^{-s} = E(s)L(f,(\rho_\lambda^\alg),s)$;
\item $\Omega(f_\lambda,f,n)= (2\pi i)^{2n-1}(2i)^{k+1}\pi^2\langle f_\lambda^0,f_\lambda^0\rangle_{\Gamma_1(D_\cK)}$.
\end{itemize}

Let $\CI_R = \CI\widehat\otimes_\cO R = R[\![\Gamma_v]\!]$ and let $L_p^-(\cK)\in \CI$ be the anticyclotomic $p$-adic $L$-function of the imaginary quadratic field $K$.
The $p$-adic $L$-function $L_p^-(\cK)$  is a specialization of Katz's two-variable $p$-adic $L$-function and
satisfies the following interpolation property: for an integer $k\geq 0$ such that $k\equiv 1\pmod{2(p-1)}$
and $\lambda\in\CX(\CI)$ an algebraic homomorphism of weight $k$,
$$
L^-_p(\cK,\lambda) : =\lambda(L_p^-(\cK)) = S(\lambda)\Omega_p^{2k-2} \frac{w_\cK(2\pi)^{k-2}\Gamma(k)}{2 D_\cK^{(k-1)/2}\Omega_\infty^{2k-2}}
L(\rho_\lambda^\alg\rho_\lambda^{\alg,-\tau},1).
$$
Here the superscript `-$\tau$' denotes composing the inverse character with the action on $\A_\cK^\times$ of
the nontrivial automorphism $\tau$ of $K$. The complex and $p$-adic periods $\Omega$ and $\Omega_p$ are the periods of 
an elliptic curve with CM by $\cO_\cK$ and can be fixed to be the same as those appearing in
\eqref{Lp-interp}.  Appealing to the well-known relation
\begin{equation*}\begin{split}
\langle f_\lambda^0,f_\lambda^0\rangle_{\Gamma_1(D_\cK)} & = \Gamma(k)D_\cK^2 2^{-2k}\pi^{-1-k} \cdot\res_{s=k} \CalD(s,f_\lambda,f_\lambda^\tau) \\
& = \Gamma(k)D_\cK^2 2^{-2k}\pi^{-1-k} \cdot \frac{2\pi h_\cK}{w_\cK D_\cK^{1/2}}L(\rho_\lambda^\alg\rho_{\lambda}^{\alg,-\tau},1),
\end{split}
\end{equation*}
where $h_\cK$ is the class number of $\cK$,
the interpolation formula for $L_p^-(\cK)$ can be rewritten as
\begin{equation}\label{LpK-interp}
 L_p^-(\cK, \lambda) = w_\cK^2 S(\lambda) \Omega_p^{2k-2} \frac{\pi^{2k-2} 2^{3k-4}}{h_\cK D_\cK^{k/2+1}\Omega_\infty^{2k-2}}
 \langle f_\lambda^0,f_\lambda^0\rangle_{\Gamma_1(D_\cK)}.
\end{equation}
Note that $\rho_\lambda^{\alg}\rho_\lambda^{\alg,-\tau}$ is an anticyclotomic character with infinity type $(k-1,1-k)$.

Suppose 
\begin{equation}\label{units}
p\nmid w_\cK. 
\tag{units}
\end{equation}
We put
$$
\CL_p(f) = \left (\frac{h_\cK}{w_\cK}L_p^-(\cK)\otimes 1 \right ) D.
$$
We consider $\CL_p(f)$ as an element of the fraction field of $\Lambda_\cK = \O[\![\Gamma_\cK]\!]$ via the isomorphism
$\Gamma_\cK\isoarrow \Gamma_v\oplus\Gamma_\cK^+$ that is the direct sum of the canonical projections
to $\Gamma_v$ and $\Gamma_\cK^+$, respectively.  Then for a finite set $\Sigma$ of finite places
of $K$ not dividing $p$ we put
$$
\CL_p^\Sigma(f) = \CL_p(f) \times\prod_{w\in\Sigma}  P_w(\eps^{-1}\Psi_\cK(\frob_w)).
$$
If $\Sigma$ contains all places dividing $ND_\cK$ that do not divide $p$, then Wan has shown that
$\CL_p^\Sigma(f) \in \Lambda_\cK$ \cite{wan:rankin}. For such $\Sigma$ we let
$\CL_p^\Sigma(f)^-\in \Lambda$ be the image of $\CL_p^\Sigma(f)$ under the quotient map
$\Lambda_\cK\twoheadrightarrow\Lambda$ induced by the canonical projection $\Gamma_\cK\twoheadrightarrow\Gamma$.

\subsection{Relating $\CL^\Sigma_p(f)^-$ to $L_p^\Sigma(f)$}\label{subsec:spec}
Let $\psi\in\Sigma_{cc}$. By abuse of notation we
also denote by $\widehat\psi$ the homomorphism of $\Lambda_\cK$ obtained by composition with the projection
$\Lambda_\cK\twoheadrightarrow\Lambda$ and we put $\CL_p^\Sigma(f,\psi)^- = \widehat\psi(\CL^\Sigma_p(f))$. 
Then the homomorphism $\widehat\psi\in\CX(\Lambda_\cK)$ corresponds via the isomorphism
$\Lambda_\cK\isoarrow\CI\widehat\otimes_\O\Lambda^+$ to the point $(\lambda,\lambda')\in \CX(\CI)\times\CX(\Lambda^+)$
with $\lambda|_{\Gamma_v} = \psi_n\eps^{-n}$ (and so algebraic of weight $2n+1$) and
with $\lambda'|_{\Gamma_\cK^+} = \eps^n$ (and so algebraic of weight $n$). Here $n$ is such that $\psi^\alg$ has infinity type $(n,-n)$.
From the interpolation formulas for $D(\lambda,\lambda')$
and $L^-_p(\cK,\lambda)$ we then find
\begin{equation*}\label{CalLp-interp}
\begin{split}
\CL_p^\Sigma(f,\psi)^- = \widehat\psi(\CL^\Sigma_p(f)) = w_\cK N^{2n+1/2} & D_\cK^{-1} \Gamma(n)\Gamma(n+1)W_f i^{-1} 2^{2n}\pi^{2n-1} \\
& \times E_{\bv}(f,\psi)^2 \Omega_p^{4n} \frac{L^\Sigma(f,\psi_n^\alg,1)}{\Omega^{4n}}.
\end{split}
\end{equation*}
In particular, for $\psi\in\Sigma_{cc}$,
\begin{equation}\label{CL-=Lp}
\CL_p^\Sigma(f,\psi)^- = c(f,\psi)L_p(f,\psi),
\end{equation}
where
\begin{equation*}
c(f,\psi) = t_K^{-1} D_\cK^{-3/2} N^{3n+1/2} \prod_{\ell\mid N^-}\left(\frac{\ell+1}{\ell-1}\right) 2^{2n+2} i^{-1} N_{\cK/\Q}(\grb) \psi^\alg(x_\grp) b_N^{-2n}\alpha(f,f_b).
\end{equation*}
The following lemma allows us to pass from \eqref{CL-=Lp} to a relation between $\CL_p^\Sigma(f,\psi)^-$ and $L_p^\Sigma(f)$.

\begin{lem}\label{c(f,psi)-lem}
There exist $c\in R \left [\frac{1}{p} \right ]^\times$ and $\CU\in \Lambda^\times$ such that 
$c(f,\psi) = c\widehat\psi(\CU)$ for all $\psi\in \Sigma_{cc}$ with $n\equiv 0 \pmod{p-1}$.
\end{lem}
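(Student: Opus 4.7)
The plan is to decompose $c(f,\psi)$ as a product of a $\psi$-independent scalar $c\in R[1/p]^\times$ and a factor of the form $\widehat\psi(\CU)$ for a fixed unit $\CU\in\Lambda^\times$.

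First I would isolate the factors of $c(f,\psi)$ that are manifestly independent of $\psi$: the scalars $t_\cK^{-1}$, $D_\cK^{-3/2}$, $N^{1/2}$, $\prod_{\ell\mid N^-}\tfrac{\ell+1}{\ell-1}$, the constant $4$ extracted from $2^{2n+2}=4\cdot 4^n$, $i^{-1}$, $N_{\cK/\Q}(\grb)$, and $\alpha(f,f_b)$. Using $p\nmid N$ (from \eqref{good}) and $p\nmid D_\cK$ (from \eqref{split}), that $p$ is odd, the inclusion $\cO_\cK\subset L$, and the freedom to take $\grb$ and $b_N$ coprime to $p$, each of these factors is a nonzero element of $R[1/p]$. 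Since $R[1/p]$ is the fraction field of the DVR $R$, their product $c$ is automatically a unit.

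The $\psi$-dependent factors that remain are $N^{3n}$, $4^n$, $b_N^{-2n}$, and $\psi^\alg(x_\grb)$. Expanding the last via the defining identity $\psi^\alg(x)=\psi(\rec_\cK(x))\,x_v^{-n}x_\bv^n\, x_\infty^n\bar x_\infty^{-n}$ at the finite idele $x_\grb$ yields
$$
\psi^\alg(x_\grb)=\psi(\rec_\cK(x_\grb))\cdot x_{\grb,v}^{-n}\,x_{\grb,\bv}^n,
$$
so the entire $\psi$-dependent piece is a product of $\widehat\psi(\rec_\cK(x_\grb))$ and monomials $a^{\pm n}$ with $a\in\{N,\,2,\,b_N,\,x_{\grb,v},\,x_{\grb,\bv}\}\subset\Zp^\times$.

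The key step is to write each such $a^n$ as $\widehat\psi$ of an element of $\Gamma\subset\Lambda^\times$. The congruence $n\equiv 0\pmod{p-1}$ kills the Teichm\"uller component ($\omega(a)^n=1$), so $a^n=\langle a\rangle^n$. To identify $\langle a\rangle^n$ with $\widehat\psi(\rec_v(a))$, where $\rec_v\colon\cK_v^\times\to\Gamma$ denotes local reciprocity at $v$ composed with the projection $G_\cK^{\ab}\twoheadrightarrow\Gamma$, I would invoke the crystallinity of $\psi|_{G_{\cK_v}}$ with Hodge--Tate weight $-n$: this forces $\psi|_{G_{\cK_v}}=\eps^{-n}\cdot\chi_v$ with $\chi_v$ unramified. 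For $u\in\Zp^\times$, $\chi_v(\rec(u))=1$ (unramifiedness) and $\eps^{-n}(\rec(u))=u^n$ (geometric normalization $\eps(\rec(u))=u^{-1}$), whence $\widehat\psi(\rec_v(u))=u^n$. Torsion-freeness of $\Gamma$ then gives $\rec_v(u)=\rec_v(\langle u\rangle)$, consistent with the Teichm\"uller reduction.

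Assembling,
$$
\CU:=\rec_\cK(x_\grb)\cdot\rec_v\!\left(\langle N\rangle^{3}\,\langle 2\rangle^{2}\,\langle b_N\rangle^{-2}\,\langle x_{\grb,\bv}\rangle\,\langle x_{\grb,v}\rangle^{-1}\right)
$$
is a product of elements of $\Gamma$, hence a unit in $\Lambda$, and by construction $\widehat\psi(\CU)$ absorbs every $n$-dependent factor of $c(f,\psi)$, giving $c(f,\psi)=c\cdot\widehat\psi(\CU)$ uniformly for $\psi\in\Sigma_{cc}$ with $n\equiv 0\pmod{p-1}$. The only genuine subtlety is the reciprocity/Hodge--Tate identification in the key step; once that is pinned down, the remainder is pure bookkeeping.
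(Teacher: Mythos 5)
Your proposal matches the paper's argument in both structure and substance: both isolate the $\psi$-independent constant $c$ and absorb the remaining $n$-dependent monomials (from $N^{3n}$, $2^{2n}$, $b_N^{-2n}$, $\psi^\alg(x_\grb)$) into $\widehat\psi(\CU)$ for an explicit $\CU\in\Gamma\subset\Lambda^\times$, using the crystallinity of $\psi$ at $v$ together with $n\equiv 0\pmod{p-1}$. The only cosmetic differences are that the paper normalizes $x_\grb$ to have trivial $v$- and $\bv$-components (so $\psi^\alg(x_\grb)=\psi(\rec_\cK(x_\grb))$ with no extra unit factors), and builds $\CU$ from powers $\gamma_v^{\alpha/a_v}$ of a fixed topological generator of the image of $I_v$ in $\Gamma$ rather than from $\rec_v$ of explicit $1$-units. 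One caveat to reconcile: you write $\psi|_{G_{\cK_v}}=\eps^{-n}\chi_v$ whereas the paper states $\psi(\gamma_v)=\eps(\gamma_v)^n$, which differ by a sign on inertia; your version is the one consistent with $\psi^{\alg}$ being unramified at $v$, but you should pin down the paper's Hodge--Tate normalization and make sure your sign convention matches theirs before relying on it.
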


\begin{proof}
We let
$$
c = t_\cK^{-1} D_\cK^{-3/2} N^{1/2} \prod_{\ell\mid N^-}\left(\frac{\ell+1}{\ell-1}\right) 2^2 i^{-1} N_{\cK/\Q}(\grb) \alpha(f,f_B) \in R[\frac{1}{p}]^\times.
$$
We can assume that $x_\grb$ is chosen so that $x_{\grb,v} = 1 = x_{\grb,\bv}$. 
Let $\gamma_\grb\in \Gamma$ be the image of $\rec_\cK(x_\grb)$. Then $\psi^\alg(x_\grb)  = \psi(\gamma_\grb)$.
Let $\gamma_v\in \Gamma$ be a topological generator of the image of the inertia group $I_v$.
For $\psi$ to be crystalline at $v$ with Hodge-Tate weight $-n$ means that $\psi(\gamma_v) = \eps(\gamma_v)^n$.
Let $a_v\in \Z_p^\times$ such that $\eps(\gamma_v) = (1+p)^{a_v}$ and put 
$$
\CU = \gamma_v^{(2\log_p(2)+ 3\log_p(N) +\log_p(b_N))/a_v} \gamma_\grb \in \Gamma\subset \Lambda^\times.
$$
Here $\log_p$ is the Iwasawa $p$-adic logarithm. Also, we are viewing $b_N\in \cO_{\cK,(p)}^\times$ as an element of $\Z_p^\times$
via the identification $\Zp=\cO_{\cK,v}$ (which comes from the hypothesis that $p$ splits in $\cK$).
Then if $n\equiv 0 \pmod{p-1}$, $\CU$ satisfies
$$
\widehat\psi(\CU) = (1+p)^{n(2\log_p(2)+ 3\log_p(N) +\log_p(b_N))}\psi^\alg(x_\grb) = 2^{2n}N^{3n}b_N^n\psi^\alg(x_\grb),
$$
and so $c\widehat\psi(\CU) = c(f,\psi)$.
\end{proof}

Combining Lemma \ref{c(f,psi)-lem} with \eqref{CL-=Lp} we find that 
$\CL_p^\Sigma(f,\psi)^- = \widehat\psi(c\CU L_p^\Sigma(f))$ for all 
$\psi\in\Sigma_{cc}$ with $n\equiv 0\pmod{p-1}$.  If $\Sigma$ contains all the places dividing $ND_\cK$ not dividing $p$,
then $\CL_p^\Sigma(f,\psi)^- = \widehat\psi(\CL_p^\Sigma(f)^-)$ for all these $\psi$.
Since the set of kernels of the homomorphisms $\widehat\psi$ of $\Lambda_R$ for such $\psi$ 
are Zariski dense in $\mathrm{Spec}(\Lambda_R)$,  we conclude that we then have $\CL_p^\Sigma(f)^- = c\CU L_p(f)$ in
$\Lambda_R$. Hence:

\begin{coro}\label{Lpf = CLp^- almost} If $\Sigma$ contains all places of $\cK$ diving $ND_\cK$ that do not divide $p$, then 
$(L_p^\Sigma(f)) = (\CL_p^\Sigma(f)^-)$ in $\Lambda_R\left [\frac{1}{p} \right ]$.
\end{coro}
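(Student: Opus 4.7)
The plan is to deduce the equality of ideals by identifying $\CL_p^\Sigma(f)^-$ with an explicit unit multiple of $L_p^\Sigma(f)$ in $\Lambda_R$, and this identification will be forced by evaluating both elements at a Zariski dense set of characters. The ingredients are already in place: equation \eqref{CL-=Lp} says $\CL_p^\Sigma(f,\psi)^- = c(f,\psi)\,L_p(f,\psi)$ for all $\psi\in\Sigma_{cc}$ with $n\equiv 0\pmod{p-1}$, and Lemma~\ref{c(f,psi)-lem} factors $c(f,\psi) = c\,\widehat\psi(\CU)$ for a fixed $c\in R[1/p]^\times$ and a fixed $\CU\in\Lambda^\times$.

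First I would observe that, under the hypothesis that $\Sigma$ contains every place of $\cK$ that divides $ND_\cK$ but not $p$, the Euler-factor modifications defining $L_p^\Sigma(f)$ from $L_p(f)$ and $\CL_p^\Sigma(f)^-$ from $\CL_p(f)^-$ are compatible: both specializations at $\psi\in\Sigma_{cc}$ differ from the complete values exactly by the same finite product of local factors, because the character $\psi$ is unramified outside $p$ at every place where the two Euler factors could potentially differ. Consequently, passing from $L_p(f,\psi)$ to $L_p^\Sigma(f,\psi)$ on the right-hand side of \eqref{CL-=Lp} preserves the relation, giving
\[
\CL_p^\Sigma(f,\psi)^- \;=\; c\,\widehat\psi(\CU)\,L_p^\Sigma(f,\psi) \;=\; \widehat\psi\bigl(c\,\CU\cdot L_p^\Sigma(f)\bigr)
\]
for all such $\psi$.

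Next I would invoke Zariski density. The characters $\widehat\psi\colon\Lambda_R\to\bQp$ arising from $\psi\in\Sigma_{cc}$ with $n\equiv 0\pmod{p-1}$ have kernels that are Zariski dense in $\mathrm{Spec}(\Lambda_R)$ (one infinitely many distinct $n$ in a single congruence class suffices, since $\Lambda_R$ has Krull dimension two and these points form an infinite set in the one-dimensional character variety for $\Gamma$). Since two elements of $\Lambda_R$ that agree under a Zariski dense set of continuous $R$-algebra homomorphisms must coincide, we obtain the identity
\[
\CL_p^\Sigma(f)^- \;=\; c\,\CU\cdot L_p^\Sigma(f) \qquad \text{in } \Lambda_R.
\]

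Finally, because $c\in R[1/p]^\times$ and $\CU\in\Lambda^\times$, the product $c\,\CU$ is a unit in $\Lambda_R[1/p]$, so the two principal ideals $(L_p^\Sigma(f))$ and $(\CL_p^\Sigma(f)^-)$ coincide after inverting $p$. I do not expect any serious obstacle here: the substantive bookkeeping — tracking how each algebraic factor in $c(f,\psi)$ varies as a function of $\psi$ and assembling those pieces into a single $\CU\in\Lambda^\times$ — was already carried out in Lemma~\ref{c(f,psi)-lem}, and the remaining argument is a clean application of Zariski density plus the unit property of $c\,\CU$.
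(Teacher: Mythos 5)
Your proposal matches the paper's proof essentially step for step: you combine \eqref{CL-=Lp} with Lemma~\ref{c(f,psi)-lem} to get $\CL_p^\Sigma(f,\psi)^- = \widehat\psi(c\,\CU\,L_p^\Sigma(f))$ at the dense set of arithmetic specializations, invoke Zariski density to upgrade this to an identity $\CL_p^\Sigma(f)^- = c\,\CU\,L_p^\Sigma(f)$ in $\Lambda_R$, and conclude the equality of ideals in $\Lambda_R[1/p]$ from $c\,\CU$ being a unit there. The only difference is that you spell out the Euler-factor compatibility and the density argument more explicitly than the paper does, but the route is the same.
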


\begin{rmk}
It is possible to define $\cL_p^\Sigma(f)^-$ for all finite sets $\Sigma$ of places of $\cK$ not dividing $p$ and to directly show,
again making use of \eqref{CL-=Lp} and Lemma \ref{c(f,psi)-lem}, that the conclusion of Corollary \ref{Lpf = CLp^- almost} holds
for all such $\Sigma$. We will not need this.
\end{rmk}

%
%
\section{Main Conjectures}\label{bsd-acmc}
Recall that in Section \ref{subsubsec:iwasawa-selstruct} we defined the Iwasawa-theoretic 
Selmer groups $\H^1_{\SelMin^\Sigma}(\cK, M)$ and $\H^1_{\SelGr^\Sigma}(\cK, \CM)$. 
Let
$$
X_{\ac}^\Sigma(M) = \Hom_{\cO}(\H^1_{\SelMin^\Sigma}(\cK, M), L / \cO) \ \ \text{and} \ \
X_{\Gr}^\Sigma(\CM) = \Hom_{\cO}(\H^1_{\SelGr^\Sigma}(\cK, \CM), L / \cO)
$$ 
be their Pontrjagin duals.
We now recall the Iwasawa-Greenberg main conjectures for these groups together with some recent results towards proving these conjectures.
To do this, recall that $\Lambda = \O[\![\Gamma]\!]$ and $\Lambda_\cK = \O[\![\Gamma_\cK]\!]$ and that $R$ is the valuation ring of the completion
of the maximal unramified extension of $L$. Let $\Lambda_R = R[\![\Gamma]\!]$ and $\Lambda_{\cK,R} = R[\![\Gamma_\cK]\!]$.
The groups $X_{\ac}^\Sigma(M)$ and
$X_{\Gr}^\Sigma(\CM)$ are, respectively, finite $\Lambda$- and $\Lambda_\cK$-modules.  

\subsection{The Iwawasa--Greenberg main conjectures}\label{subsec:mainconj}
Let $\Sigma$ be a finite set of finite places of $\cK$ that do not divide $p$. The main conjectures are easy to state:

\begin{conj}[Main Conjecture for $\CM$]\label{conj:greenberg-conj}
Let $\CL_p^{\Sigma}(f)$ be Hida's two variable $p$-adic $L$-function recalled in Section~\ref{subsec:twovarpadicLfunc}. Then
$$
\chr_{\Lambda_\cK} \left ( X_{\Gr}^\Sigma(\CM)\right ) \Lambda_{\cK,R} = (\CL_p^\Sigma(f)) \subset \Lambda_{\cK,R}.
$$
\end{conj}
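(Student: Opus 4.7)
The plan is to establish the equality as the conjunction of two opposite divisibilities in $\Lambda_{\cK,R}$, combined with the vanishing of the $\mu$-invariant to remove any ambiguity at powers of~$p$.

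First, to prove $\chr_{\Lambda_\cK}(X_{\Gr}^\Sigma(\CM)) \Lambda_{\cK,R} \subseteq (\CL_p^\Sigma(f))$, I would follow the Eisenstein-congruence strategy of Skinner-Urban \cite{skinner-urban:gl2}, as extended to the Rankin--Selberg setting by Wan \cite{wan:rankin, wan:rankin-ss}. Concretely, one works on a unitary group such as $\GU(3,1)$ attached to $\cK$: construct a Hida family of Klingen-type Eisenstein series whose constant and Fourier--Jacobi coefficients are divisible by $\CL_p^\Sigma(f)$, show that this family is congruent to a cuspidal Hida family modulo $\CL_p^\Sigma(f)$, and then apply a Ribet-style lattice construction to the Galois representations of the cusp forms to extract classes in $\H^1_{\SelGr^\Sigma}(\cK, \CM)$ accounting for every height-one prime divisor of $\CL_p^\Sigma(f)$. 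This is precisely Wan's theorem, yielding the divisibility up to possible powers of~$p$.

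Second, for the reverse divisibility $(\CL_p^\Sigma(f)) \subseteq \chr_{\Lambda_\cK}(X_{\Gr}^\Sigma(\CM)) \Lambda_{\cK,R}$, one needs an upper bound on $X_{\Gr}^\Sigma(\CM)$ via an Euler system for the Cartier--Tate dual $\CM^\vee(1) \cong T \otimes_\cO \Lambda_\cK$, with the Panchishkin local condition at $v$, whose bottom class is linked to $\CL_p^\Sigma(f)$ through an explicit reciprocity law. Natural candidates are the Beilinson--Flach Euler system of Lei--Loeffler--Zerbes (coupled with the explicit reciprocity law of Kings--Loeffler--Zerbes) in the Rankin--Selberg family, or a Hida-deformed Heegner point system extending the Shimura-curve classes of Section~\ref{bsd-cmpts}. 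Rubin's Euler system machinery \cite{rubin:book} (or a Kolyvagin-system refinement in the style of Mazur--Rubin) then bounds the Selmer module from above by $\CL_p^\Sigma(f)$. Combining the two divisibilities with Proposition~\ref{mu=0 prop} (via the specialization of Corollary~\ref{Lpf = CLp^- almost}, which feeds back up to the two-variable setting through Corollary~\ref{bigSelmercontrol-cor}) eliminates any residual $p$-power indeterminacy.

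The main obstacle will be the Euler system direction. While the Eisenstein divisibility is essentially available from Wan, producing a 2-variable Euler system whose bottom class matches $\CL_p^\Sigma(f)$ \emph{exactly}, rather than a twist or specialization, and carrying out the necessary explicit reciprocity computation over the full $\Z_p^2$-extension, is the crux and is currently out of reach in this generality. It is precisely for this reason that, in the rest of the paper, the authors sidestep the full conjecture: they descend Wan's divisibility to the anticyclotomic line (where only one divisibility is needed), invoke Burungale's $\mu$-invariant vanishing, and then complete the BSD argument without ever proving the equality asserted here.
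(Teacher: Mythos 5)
This statement is labelled a \textit{conjecture} in the paper, and the paper offers no proof of it; what it records is only Wan's one-sided divisibility (Theorem~\ref{MCthm1}), obtained from Eisenstein congruences on $\GU(3,1)$, and the rest of the paper proceeds by descending that single divisibility to the anticyclotomic line, invoking Burungale's $\mu$-invariant vanishing, and never needing the full equality. Your write-up is therefore not a proof, but you have correctly diagnosed the situation: the ``lower bound'' direction is exactly Wan's result, the ``upper bound'' direction would require a two-variable Euler system for $\CM^\vee(1)$ with a matching explicit reciprocity law (which is not available in this generality), and the authors sidestep the conjecture precisely as you describe. In other words, your account agrees with the paper's: the statement is open, and only the Eisenstein-ideal divisibility is used downstream. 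One small clarification worth making explicit if you expand this: the passage from Wan's divisibility ``up to powers of $p$'' in $\Lambda_{\cK,R}[1/p]$ to an honest divisibility in $\Lambda_{\cK,R}$ is itself supplied later only after specializing to the anticyclotomic variable (Corollary~\ref{bigSelmercontrol-cor}, Corollary~\ref{Lpf = CLp^- almost}, and Proposition~\ref{mu=0 prop}), not at the level of $\Lambda_{\cK,R}$; so even the one direction you attribute to Wan is, as stated in the conjecture, slightly stronger than what is currently proved over the full two-variable algebra.
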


Note that implicit in the statement of this conjecture is that $\cL_p^\Sigma(f)$ belongs to $\Lambda_{\cK,R}$ and not just its field of fractions.  In Section \ref{subsec:twovarpadicLfunc} this is explained
to be known to hold at least when $\Sigma$ is sufficiently large.

\begin{conj}[Main Conjecture for $M$] \label{conj:ACconj}
Let $L_p^\Sigma(f)\in\Lambda_R$ be the $p$-adic $L$-function defined in Section~\ref{subsec:acpadicLfunc}. Then
$$
\chr_{\Lambda} \left ( X_{\ac}^{\Sigma}(M)\right )\Lambda_R = (L_p^\Sigma(f))\subset \Lambda_R.
$$
\end{conj}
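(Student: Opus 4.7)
The plan is to establish the ideal equality by proving each divisibility separately.

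For the divisibility $\chr_{\Lambda}(X_{\ac}^{\Sigma}(M))\Lambda_R \subseteq (L_p^\Sigma(f))$, I would start from Wan's results \cite{wan:rankin, wan:rankin-ss} toward the two-variable Rankin--Selberg main conjecture (Conjecture~\ref{conj:greenberg-conj}), which yield the analogous inclusion $\chr_{\Lambda_\cK}(X_{\Gr}^\Sigma(\CM))\Lambda_{\cK,R} \subseteq (\CL_p^\Sigma(f))$ up to a power of $p$. Specializing modulo $(\gamma_+ - 1)$ via Corollary~\ref{bigSelmercontrol-cor} transports this to $\chr_{\Lambda}(X_{\ac}^\Sigma(M))\Lambda_R \subseteq (\CL_p^\Sigma(f)^-)$, still up to a power of $p$. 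Corollary~\ref{Lpf = CLp^- almost} then identifies the right-hand side with $(L_p^\Sigma(f))$ in $\Lambda_R[1/p]$. The residual $p$-power ambiguity is removed by Burungale's vanishing of the anticyclotomic analytic $\mu$-invariant (Proposition~\ref{mu=0 prop}): since $\mu(L_p(f))=0$, and the Euler factors introduced to pass from $L_p(f)$ to $L_p^\Sigma(f)$ contribute no $p$-power, any divisibility of the form $L_p^\Sigma(f) \mid \chi$ in $\Lambda_R$ that is valid modulo $p$-powers in fact holds on the nose.

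For the reverse divisibility $(L_p^\Sigma(f)) \subseteq \chr_{\Lambda}(X_{\ac}^{\Sigma}(M))\Lambda_R$, I would exploit an anticyclotomic Heegner-point Euler system on the Shimura curve $\XNs$. Concretely, promote the CM class $x_\cK^{N^+,N^-}$ of Section~\ref{subsec:cmpts} to a norm-compatible family of cohomology classes along the anticyclotomic $\Z_p$-tower, in the style of Howard's big Heegner points suitably generalized to the quaternionic Shimura-curve setting, and verify the Euler-system relations. Kolyvagin's machinery then yields an upper bound on $X_{\ac}^\Sigma(M)$ expressed in terms of the characteristic ideal measuring the failure of this big class to generate the relaxed Selmer module. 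Brooks' formula (Proposition~\ref{prop:brooks}) identifies $L_p(f,1)$ with the square of the $p$-adic logarithm of the Heegner class (up to explicit unit factors); propagating this identification to arithmetic specializations along the anticyclotomic tower -- essentially a $\Lambda$-adic $p$-adic Gross--Zagier formula -- converts the Euler-system bound into the desired divisibility of characteristic ideals in $\Lambda_R$.

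The harder step, by a wide margin, is the reverse divisibility. The core difficulty is that the BDP-style $p$-adic $L$-function $L_p(f)$ is defined by interpolating special values \emph{outside} the classical critical range, so its link to Heegner classes is not provided by a standard Coleman-type reciprocity law; one needs a genuine $\Lambda$-adic upgrade of Brooks' formula that propagates across the anticyclotomic tower. Compounding this, the Euler-system bound naturally controls the strict Selmer group cut out by the finite local condition at both primes above $p$, whereas $X_{\ac}^\Sigma(M)$ is attached to the asymmetric anticyclotomic local conditions (trivial at $v$ and fully relaxed at $\bv$), so delicate local duality at $v$ and $\bv$ is required to convert a Heegner-derived Selmer bound into a statement about $\chr_\Lambda(X_{\ac}^\Sigma(M))$. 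Once both inclusions are in hand, comparing them inside $\Lambda_R$ yields the asserted equality of characteristic ideals.
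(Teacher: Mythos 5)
What you are attempting to prove is stated in the paper only as a \emph{conjecture}; the paper contains no proof of Conjecture~\ref{conj:ACconj}, and in fact its strategy for the main theorem is deliberately designed so that the full two-sided equality is never needed. The only rigorously established statement in this direction is Theorem~\ref{ACthm2}, which — under the extra hypotheses \eqref{irredK}, \eqref{split}, \eqref{gen-H}, \eqref{good}, \eqref{sqfree}, and the existence of a prime of $N$ non-split in $\cK$ — gives the single inclusion $\chr_\Lambda(X_{\ac}^\Sigma(M))\Lambda_R \subset (L_p^\Sigma(f))$. Your first paragraph reproduces exactly that argument: Wan's theorem for $\CM$, descent to $M$ via Corollary~\ref{bigSelmercontrol-cor}, comparison of $\CL_p^\Sigma(f)^-$ with $L_p^\Sigma(f)$ via Corollary~\ref{Lpf = CLp^- almost}, and Burungale's $\mu=0$ (Proposition~\ref{mu=0 prop}) to upgrade from $\Lambda_R[1/p]$ to $\Lambda_R$. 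Note, however, that this inclusion is only available under hypotheses not present in the bare statement of the conjecture, and your write-up does not flag them.

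Your second paragraph is a program, not an argument that can be extracted from this paper. The paper never proves the reverse divisibility $(L_p^\Sigma(f))\subset \chr_\Lambda(X_{\ac}^\Sigma(M))\Lambda_R$. For the function it would serve (the matching upper bound on $\#\Sha$), the paper instead deploys Kolyvagin's \emph{finite-level} Heegner-point Euler system over a second auxiliary field $\cK''$ (Theorem~\ref{thm:shaK}), combined with the rank-$0$ BSD formula for $E^{D''}$ — a mechanism working with classes in $\H^1(\cK'',T)$, not with a $\Lambda$-adic system bounding $X_{\ac}^\Sigma(M)$. Beyond being a different route, your sketch faces two concrete obstacles which are exactly what the paper's strategy is built to avoid: (a) a $\Lambda$-adic upgrade of Brooks' formula (Proposition~\ref{prop:brooks}) — a BDP-style explicit reciprocity law relating a big Heegner class to the interpolation-outside-the-range $L_p(f)$, rather than to a Perrin--Riou derivative — is a substantial theorem that was not available in the generality used here; and (b) a Howard-style big Heegner Euler system controls a Selmer group whose local conditions at $v$ and $\bar v$ do not coincide with the asymmetric conditions (trivial at $v$, fully relaxed at $\bar v$) defining $X_{\ac}^\Sigma(M)$, so converting one bound into the other again requires knowing the precise local behaviour of the big class at both primes above $p$, i.e.\ a reciprocity law. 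The remark following Theorem~\ref{ACthm2} — that ``as a consequence of our results below, in some cases we will be able to improve the inclusion to an equality'' — refers to an a posteriori deduction from the final BSD computation in Section~\ref{bsd-mainthm}, not to a structural Euler-system proof of the kind you outline.
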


\begin{rmk}\label{rmk:chr-ideal}
Note that we always have equalities of ideals
$$
\chr_{\Lambda_{\cK,R}}\left( X_{\Gr}^\Sigma(\CM)\otimes_{\Lambda_\cK}\Lambda_{\cK,R}\right) = \chr_{\Lambda_\cK} \left ( X_{\Gr}^\Sigma(\CM)\right ) \Lambda_{\cK,R} 
$$
and
$$
\chr_{\Lambda_{R}}\left( X_{\ac}^\Sigma(M)\otimes_{\Lambda}\Lambda_{R}\right) = \chr_{\Lambda} \left ( X_{\ac}^\Sigma(M)\right ) \Lambda_{R}.
$$
\end{rmk} 

Recently substantial progress has been made toward these conjectures in \cite{wan:rankin} and \cite{wan:rankin-ss} by
following the strategy from \cite{skinner-urban:gl2} of exploiting congruences between suitable Eisenstein series and cuspforms, this time on $\GU(3,1)$.

\begin{thm}[{\cite[Thm.1.1]{wan:rankin}},{\cite[Thm.1.1]{wan:rankin-ss}}]\label{MCthm1} 
Suppose \eqref{irredK}, \eqref{split}, \eqref{gen-H}, \eqref{good}, and \eqref{sqfree} hold. Suppose also that there is at least one prime divisor of $N$ non-split in $\mathcal{K}$
and that $\Sigma$ contains all places dividing $ND_\cK$. Then
$$
\text{$ \chr_{\Gamma_{\cK}}\left (X_{\Gr}^\Sigma(\CM) \right )\Lambda_{\cK,R}\left [1/p
\right ] \subseteq (\CL_p^\Sigma(f)) \subset \Lambda_{\cK,R}\left [1/p \right ]$.}
$$
\end{thm}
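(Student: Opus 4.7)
The plan is to follow the Eisenstein congruence strategy on the unitary group $\GU(3,1)$ developed by Skinner--Urban and extended by Wan to the present Rankin--Selberg setting. The assumption that at least one prime divisor of $N$ is non-split in $\cK$ is what forces the sign of the relevant functional equation so that an appropriate Klingen-type Eisenstein series exists on $\GU(3,1)$, and is also what allows a transfer to a suitable definite/indefinite quaternionic form needed to exhibit non-trivial classes. At a high level, one first constructs a $p$-adic family of Klingen Eisenstein series $\mathbf{E}_f$ on $\GU(3,1)$ whose Fourier--Jacobi or constant terms along a maximal parabolic interpolate the Rankin--Selberg $L$-values that assemble into $\CL_p^\Sigma(f)$. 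The work consists in carrying out the archimedean and non-archimedean local sections carefully (including a proper choice at the primes in $\Sigma$ and at places dividing $p$) so that the specialization of the constant term is exactly $\CL_p^\Sigma(f)$ times an explicit unit-up-to-$p$ factor; this step absorbs the loss of information away from $p$, explaining the $\Lambda_{\cK,R}[1/p]$ coefficients in the conclusion.

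Next I would establish the Eisenstein congruence: after inverting $p$-units, the Eisenstein family $\mathbf{E}_f$ becomes congruent modulo $\CL_p^\Sigma(f)$ to a cuspidal Hida family $\mathbf{F}$ on $\GU(3,1)$. The mechanism is that $\CL_p^\Sigma(f)$ appears as the denominator of a Maass--Eisenstein section (via the Fourier--Jacobi expansion), and lattice arguments in the space of $p$-adic automorphic forms force the existence of cusp forms congruent to $\mathbf{E}_f$ modulo this denominator. This is the most delicate analytic/automorphic part: one must compute enough Fourier--Jacobi coefficients to detect that the Eisenstein family does not lie in the cuspidal part integrally, and one must verify that the relevant Hecke operators act with eigenvalues matching the expected residual Galois representation.

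The third step is the Galois-theoretic one. Attaching to $\mathbf{F}$ a big Galois representation $\cR : G_\cK \to \GL_4$ (via the known results on Galois representations for cuspidal automorphic forms on unitary groups), and using the congruence with $\mathbf{E}_f$ together with the irreducibility input \eqref{irredK}, one applies the Urban--Ribet-style lattice argument: a suitable invariant $\cO$-lattice inside $\cR$ produces a non-split extension realizing classes in $\H^1_{\SelGr^\Sigma}(\cK,\CM)$, and the size of the image of these classes is controlled from below by $\CL_p^\Sigma(f)$. Dualizing and invoking a Fitting-ideal argument shows that the characteristic ideal of $X_{\Gr}^\Sigma(\CM)$ is divisible by $\CL_p^\Sigma(f)$ after inverting $p$, which is the claimed inclusion.

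The main obstacle will be the last step in the supersingular case treated in \cite{wan:rankin-ss}, where the ordinary Hida-theory framework has to be replaced by Coleman/$\pm$-families and the lattice construction has to be performed in a non-ordinary $p$-adic automorphic setting; verifying the required local conditions at $p$ (so that the produced extension classes satisfy the Greenberg local condition at $\bv$ and are arbitrary at $v$) is technically the hardest point. Secondary difficulties include the explicit computation of the degenerate Fourier--Jacobi coefficients needed for the congruence, and matching the local factors at places in $\Sigma$ and at ramified primes of $\cK$ to recover the expected $\CL_p^\Sigma(f)$ rather than an uncontrolled auxiliary modification (which is precisely why the result is stated only up to $p$-powers).
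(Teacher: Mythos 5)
This theorem is imported verbatim from Wan's papers \cite{wan:rankin} and \cite{wan:rankin-ss}; the present paper does not supply a proof, and the only indication of method is the one-sentence remark preceding the theorem that Wan follows the Skinner--Urban Eisenstein-congruence strategy, now on $\GU(3,1)$. There is therefore no in-paper argument against which to compare your sketch; what follows compares it against the strategy of the cited work as the paper describes it.

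Your high-level account --- a Klingen Eisenstein family on $\GU(3,1)$ whose degenerate Fourier--Jacobi coefficients interpolate $\CL_p^\Sigma(f)$, an Eisenstein ideal congruence, and a Ribet--Urban lattice construction producing classes in $\H^1_{\SelGr^\Sigma}(\cK,\CM)$ --- is broadly consistent with that strategy. There is, however, one concrete misconception: you assert that the supersingular case in \cite{wan:rankin-ss} requires replacing Hida theory with Coleman or $\pm$-families. That is not how the anticyclotomic/Rankin--Selberg setting works. In Wan's construction the moving variable is the CM Hecke character (a CM Hida family of theta series), while the newform $f$ is held fixed; the $\GU(3,1)$ family is ordinary because the CM family is, not because $f$ is. Likewise, the Greenberg local condition at $p$ on $\CM$ used here is a Panchishkin-type condition (see the remark in Section~\ref{subsubsec:iwasawa-selstruct}), which holds because the Hodge--Tate weights of $\Psi_\cK$ dominate those of $V_f$ at both $v$ and $\bv$, independently of whether $f$ is ordinary at $p$. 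This is precisely why the supersingular case is accessible in the anticyclotomic setting by ordinary methods, in contrast to the cyclotomic setting where the signed/$\pm$-formalism is genuinely needed (cf.~\cite{wan:kobayashi}, which is invoked separately in Theorem~\ref{thm:rank0}(iii)). Your remaining remarks --- the role of $\Sigma$ in absorbing uncontrolled local factors away from $p$, and the $[1/p]$ in the conclusion reflecting that the divisibility is proved only up to $\mu$-invariant/$p$-power issues --- are reasonable summaries of the difficulties.
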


\noindent Combining this with Corollary \ref{bigSelmercontrol-cor} and Corollary \ref{Lpf = CLp^- almost}, we conclude:

\begin{thm}\label{ACthm1} Suppose \eqref{irredK}, \eqref{split}, \eqref{gen-H}, \eqref{good}, and \eqref{sqfree} hold. 
Suppose also that there is at least one prime divisor of $N$ non-split in $\mathcal{K}$ and that $\Sigma$ contains all places dividing $ND_\cK$. Then
$$
\text{$\chr_\Lambda \left (X_{\ac}^{\Sigma}(M)\right )\Lambda_R \left [1/p \right ] \subset (L_p^\Sigma(f)) \subset \Lambda_R \left [1/p \right ]$.}
$$
\end{thm}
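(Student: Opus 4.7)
The plan is to deduce Theorem \ref{ACthm1} by specializing the two-variable divisibility provided by Theorem \ref{MCthm1} along the canonical surjection $\Lambda_\cK \twoheadrightarrow \Lambda$ (induced by $\Gamma_\cK \twoheadrightarrow \Gamma$, with kernel generated by $\gamma_+ - 1$), and then comparing the resulting object to $L_p^\Sigma(f)$ by means of the two preparatory results already in place: Corollary \ref{bigSelmercontrol-cor} on the Selmer side, and Corollary \ref{Lpf = CLp^- almost} on the analytic side. Since $\Lambda_{\cK,R}[1/p]$ and $\Lambda_R[1/p]$ are UFDs and the characteristic ideals in play are principal, the argument is essentially a formal manipulation of ideals under this surjection.

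Concretely, first I would take the divisibility of Theorem \ref{MCthm1},
$$
\chr_{\Lambda_\cK}\bigl(X_{\Gr}^{\Sigma}(\CM)\bigr)\,\Lambda_{\cK,R}\!\left[1/p\right] \;\subseteq\; \bigl(\CL_p^{\Sigma}(f)\bigr) \;\subset\; \Lambda_{\cK,R}\!\left[1/p\right],
$$
and push it forward along the induced surjection $\pi \colon \Lambda_{\cK,R}[1/p] \twoheadrightarrow \Lambda_R[1/p]$ (reduction modulo $\gamma_+ - 1$). The image of the right-hand ideal is $(\CL_p^{\Sigma}(f)^-)$ by the definition of $\CL_p^{\Sigma}(f)^-$ at the end of Section \ref{subsec:twovarpadicLfunc}, and the image of the left-hand ideal is $\chr_{\Lambda_\cK}(X_{\Gr}^{\Sigma}(\CM))\bmod(\gamma_+ - 1)$, viewed in $\Lambda_R[1/p]$.

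Next, since by hypothesis $\Sigma$ contains all finite places dividing $ND_\cK$ and hence in particular all finite places $w \nmid p$ at which $V$ is ramified, Corollary \ref{bigSelmercontrol-cor} applies and gives
$$
\chr_\Lambda\bigl(X_{\ac}^{\Sigma}(M)\bigr) \;\subseteq\; \chr_{\Lambda_\cK}\bigl(X_{\Gr}^{\Sigma}(\CM)\bigr)\bmod(\gamma_+ - 1).
$$
Chaining this with the inclusion obtained above yields
$$
\chr_\Lambda\bigl(X_{\ac}^{\Sigma}(M)\bigr)\,\Lambda_R\!\left[1/p\right] \;\subseteq\; \bigl(\CL_p^{\Sigma}(f)^-\bigr) \;\subset\; \Lambda_R\!\left[1/p\right].
$$
Finally, Corollary \ref{Lpf = CLp^- almost}, whose hypothesis on $\Sigma$ is exactly what we are assuming, gives the equality of ideals $(L_p^{\Sigma}(f)) = (\CL_p^{\Sigma}(f)^-)$ in $\Lambda_R[1/p]$, and substitution produces Theorem \ref{ACthm1}.

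The real obstacle has in fact already been dealt with in Sections \ref{bsd-control} and \ref{bsd-padicLfunc}. Namely, one needs (i) enough control of the one-variable anticyclotomic Selmer group $X_{\ac}^{\Sigma}(M)$ in terms of the coinvariants of the two-variable Greenberg Selmer group $X_{\Gr}^{\Sigma}(\CM)$, despite kernels and cokernels that are only shown to be \emph{finite} rather than trivial (this is precisely what Corollary \ref{bigSelmercontrol-cor} extracts from Lemma \ref{bigSelmercontrol-lem}, and it is the step relying on \eqref{HT} and on the finiteness of $\CM^{G_{\cK_v}}$); and (ii) the comparison, only up to a unit in $\Lambda_R[1/p]$, of the specialization $\CL_p^{\Sigma}(f)^-$ of Hida's Rankin--Selberg $p$-adic $L$-function with the Bertolini--Darmon--Prasanna--Brooks $p$-adic $L$-function $L_p^{\Sigma}(f)$, which is extracted from the interpolation identity \eqref{CL-=Lp} and the unit $\CU \in \Lambda^\times$ constructed in Lemma \ref{c(f,psi)-lem}. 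With both preparatory comparisons in hand, the specialization argument outlined above is formal; no further hypotheses beyond those listed in the theorem are required.
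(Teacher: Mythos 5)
Your proof is correct and takes the same route as the paper, which simply states that Theorem \ref{ACthm1} follows by combining Theorem \ref{MCthm1} with Corollary \ref{bigSelmercontrol-cor} and Corollary \ref{Lpf = CLp^- almost}. You have spelled out the specialization along $\Lambda_\cK \twoheadrightarrow \Lambda$ and the chaining of inclusions accurately, matching the paper's intended argument.
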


This theorem can be strengthened upon combination with Proposition \ref{Selmersurjprop} (in particular, the surjectivity of \eqref{restrict-eq1}) and
Burungale's $\mu=0$ result of Proposition \ref{mu=0 prop}:

\begin{thm}\label{ACthm2} Suppose \eqref{irredK}, \eqref{split}, \eqref{gen-H}, \eqref{good}, and \eqref{sqfree} hold. 
Suppose also that there is at least one prime divisor of $N$ non-split in $\mathcal{K}$. Then
$$
\text{$\chr_\Lambda \left (X_{\ac}^{\Sigma}(M)\right )\Lambda_R \subset (L_p^\Sigma(f)) \subset \Lambda_R$.}
$$
\end{thm}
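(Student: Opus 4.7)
The plan is to bootstrap from Theorem~\ref{ACthm1}, which gives a divisibility in $\Lambda_R[1/p]$, to the claimed integral divisibility in $\Lambda_R$ by controlling the $\mu$-invariants on both sides. The decisive new input is Burungale's vanishing theorem (Proposition~\ref{mu=0 prop}), which gives $\mu(L_p(f))=0$.

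First I would decompose both sides of the desired inclusion along $\Sigma$. The surjectivity of the restriction map in Proposition~\ref{Selmersurjprop}, combined with a snake-lemma comparison of the defining sequences for $\SelMin$ and $\SelMin^\Sigma$, produces a short exact sequence
\begin{equation*}
0 \to \H^1_{\SelMin}(\cK,M) \to \H^1_{\SelMin^{\Sigma}}(\cK,M) \to \bigoplus_{w\in\Sigma} \H^1(\cK_w,M)/\H^1_{\SelMin}(\cK_w,M) \to 0,
\end{equation*}
which, after Pontrjagin duality and passing to characteristic ideals, yields
\begin{equation*}
\chr_\Lambda(X_\ac^\Sigma(M)) = \chr_\Lambda(X_\ac(M))\cdot\prod_{w\in\Sigma}\chr_\Lambda\bigl((\H^1(\cK_w,M)/\H^1_{\SelMin}(\cK_w,M))^{\vee}\bigr).
\end{equation*}
On the analytic side one has, by definition, the parallel factorization $L_p^\Sigma(f) = L_p(f)\cdot\prod_{w\in\Sigma} P_w(\eps^{-1}\Psi^{-1}(\Frob_w))$. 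The key local claim is that each $w$-local characteristic ideal on the Selmer side equals the corresponding Euler factor up to a unit in $\Lambda$; this is a direct computation on $M = T\otimes_\cO\widehat\Lambda$ with its twisted Galois action $\rho\otimes\Psi^{-1}$, via Tate local duality and the split/non-split case analysis already used in the proof of Proposition~\ref{prop:kerr}.

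Granting this matching of local Euler factors, the problem for arbitrary $\Sigma$ reduces to showing $\chr_\Lambda(X_\ac(M))\Lambda_R\subset(L_p(f))$ inside $\Lambda_R$. To establish this I apply Theorem~\ref{ACthm1} to an auxiliary $\Sigma'\supset\Sigma$ containing all finite places dividing $ND_\cK$; after cancelling the matching Euler factors at all $w\in\Sigma'$ from both the analytic and Selmer sides, one is left with $\fac = L_p(f)\cdot h$ in $\Lambda_R[1/p]$, where $\fac$ generates $\chr_\Lambda(X_\ac(M))\Lambda_R$. Taking $\mu$-invariants and using $\mu(L_p(f))=0$ yields $\mu(h)=\mu(\fac)\ge 0$, so $h\in\Lambda_R$. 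Multiplying back the local Euler factors on both sides delivers the desired inclusion $\chr_\Lambda(X_\ac^\Sigma(M))\Lambda_R\subset(L_p^\Sigma(f))$.

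The main obstacle will be the precise local Euler factor computation, especially at places $w$ inert in $\cK$: there $\Psi$ is trivial on the decomposition group, so $P_w(\eps^{-1}(\Frob_w))$ degenerates to a scalar in $\cO$ whose $p$-adic valuation can push both $\mu$-invariants upward. It is essential that these contributions cancel exactly between the two sides, and it is precisely Tate local duality applied to $M=T\otimes_\cO\widehat\Lambda$ that delivers this cancellation; absent such a precise match, only the weaker divisibility in $\Lambda_R[1/p]$ would survive.
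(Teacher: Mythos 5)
Your argument matches the paper's proof: both use Proposition~\ref{Selmersurjprop} to obtain the multiplicativity $\chr_\Lambda(X^{\Sigma_2}_\ac(M)) = \chr_\Lambda(X^{\Sigma_1}_\ac(M)) \prod_{w\in\Sigma_2\setminus\Sigma_1} (P_w(\eps^{-1}\Psi^{-1}(\Frob_w)))$, which together with the parallel factorization of $L_p^\Sigma(f)$ reduces to the case $\Sigma=\emptyset$ after cancelling the (nonzero) Euler factors, and both then invoke Burungale's $\mu(L_p(f))=0$ result (Proposition~\ref{mu=0 prop}) to promote the $\Lambda_R[1/p]$-divisibility from Theorem~\ref{ACthm1} to a divisibility in $\Lambda_R$ before multiplying the Euler factors back in. The local concern you flag at inert places is handled, as you anticipate, by the same duality computation that underlies Proposition~\ref{prop:kerr}, and the paper asserts the resulting identification of the local characteristic ideal with the Euler factor in exactly the form you require.
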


\begin{proof}
Let $\Sigma_1\subset\Sigma_2$ be two finite sets of places of $\cK$ dividing $p$ and with $\Sigma_2$ containing all the places dividing $ND_\cK$.
Then the surjectivity of \eqref{restrict-eq1} yields 
\begin{equation*}
\begin{split}
\chr_{\Lambda}\left(X_{\ac}^{\Sigma_2}(M)\right) & = \chr_{\Lambda}\left(X_{\ac}^{\Sigma_1}(M)\right)\chr_{\Lambda}
\left(\prod_{w\in\Sigma_2\backslash \Sigma_1} \Hom_{\cO}(\frac{\H^1(\cK_w,M)}{\H^1_{\SelMin}(\cK_w,M)},L/\cO)\right) \\
& =  \chr_{\Lambda}\left(X_{\ac}^{\Sigma_1}(M)\right)\prod_{w\in\Sigma_2\backslash \Sigma_1} \left( P_w(\eps^{-1}\Psi^{-1}(\Frob_w))\right).
\end{split}
\end{equation*}
Comparing this with the definition of $L_p^{\Sigma_2}(f)$ we then see that the hypothesis that $\Sigma$ contain the places
dividing $ND_\cK$ can be removed from
Theorem \ref{ACthm1}. In the case where $\Sigma=\emptyset$ the resulting inclusion of ideals can then be improved to an inclusion in
$\Lambda_R$ in light of the $\mu=0$ result of Proposition \ref{mu=0 prop}. The inclusion in $\Lambda_R$ for any $\Sigma$ then follows.
\end{proof}

\begin{rmk}
As a consequence of our results below, in some cases we will be able to improve the inclusion in Theorem \ref{ACthm2}
to an equality. That is, we will prove Conjecture \ref{conj:ACconj} in these cases.
\end{rmk}

\subsection{Consequences for the order of $\H^1_{\SelMin}(\cK, W)$}\label{subsec:ordselac}
We can now record the key result connecting the value $L_p(f,1)$ with the order
of $\H^1_{\SelMin}(\cK, W)$.

\begin{prop}\label{Lpf1=Sel}
Suppose \eqref{irredK}, \eqref{split}, \eqref{gen-H}, \eqref{good}, and \eqref{sqfree} hold. 
Suppose also that there is at least one prime divisor of $N$ non-split in $\mathcal{K}$. Suppose further that 
\eqref{crk1} and \eqref{surjp} also hold. 
Then $L_p(f,1)\neq 0$ and
$$
\#\O/(L_p(f,1)) \mid \# \H^1_{\SelMin}(\cK, W) \cdot C(W),
$$
where $C(W)= C^\emptyset(W)$ is as in Theorem~\ref{thm:control}.
\end{prop}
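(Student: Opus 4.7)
The plan is to combine the one-sided divisibility in Theorem~\ref{ACthm2} with the Anticyclotomic Control Theorem~\ref{thm:control}, both applied with $\Sigma = \emptyset$, and then specialize at the trivial character ($T = 0$). All hypotheses of Theorem~\ref{ACthm2} are among those of the proposition, while \eqref{crk1} and \eqref{surjp} are precisely what is needed to invoke Proposition~\ref{Sel-Sha prop} and to take $\Sigma = \emptyset$ in the Control Theorem. The proof is essentially a bookkeeping specialization once these two results are in hand.

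First, I would invoke Theorem~\ref{ACthm2} to obtain
$$
\chr_{\Lambda}\bigl(X_{\ac}(M)\bigr)\Lambda_R \;\subset\; (L_p(f)) \;\subset\; \Lambda_R.
$$
Writing $\fac = \fac^{\emptyset} \in \Lambda$ for a generator of $\chr_{\Lambda}(X_{\ac}(M))$, this inclusion is equivalent to a divisibility $L_p(f) \mid \fac$ in $\Lambda_R$, so there exists some $g \in \Lambda_R$ with $\fac = L_p(f)\cdot g$. Next I would apply Theorem~\ref{thm:control} with $\Sigma = \emptyset$, which yields
$$
\#\cO/(\fac(0)) \;=\; \# \H^1_{\SelMin}(\cK, W) \cdot C(W).
$$
By Proposition~\ref{Sel-Sha prop}, whose hypotheses \eqref{crk1} and \eqref{surjp} are in force, $\H^1_{\SelMin}(\cK, W)$ is finite, and $C(W)$ is manifestly finite; therefore $\fac(0) \neq 0$.

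Specializing the relation $\fac = L_p(f)\cdot g$ at $T = 0$ gives $\fac(0) = L_p(f,1)\cdot g(0)$ in $R$, so $L_p(f,1)$ is a nonzero divisor of $\fac(0)$ in $R$; in particular $L_p(f,1) \neq 0$. Since $R$ is unramified over $\cO$, a uniformizer of $\cO$ is a uniformizer of $R$, so the $R$- and $\cO$-valuations agree on elements of $\cO$. Thus the divisibility $L_p(f,1)\mid \fac(0)$ in $R$ translates into
$$
\#\cO/(L_p(f,1)) \;\text{ divides }\; \#\cO/(\fac(0)) \;=\; \# \H^1_{\SelMin}(\cK, W) \cdot C(W),
$$
which is the stated conclusion. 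There is no genuine obstacle at this stage: the substantive content lies in the two cited inputs (Wan's Eisenstein-congruences divisibility on $\GU(3,1)$, and the Anticyclotomic Control Theorem), and the present proposition is essentially their combination evaluated at the trivial character.
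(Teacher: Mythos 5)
Your proposal is correct and takes essentially the same route as the paper: invoke Theorem~\ref{ACthm2} (with $\Sigma = \emptyset$) to get $L_p(f) \mid \fac$ in $\Lambda_R$, invoke Theorem~\ref{thm:control} to compute $\#\cO/(\fac(0))$, then specialize at $T=0$. Your treatment of the $R$ versus $\cO$ normalization at the final step is a bit more careful than the paper's one-line conclusion, but the substance is identical.
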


\begin{proof}
Let $(f_\ac(T)) = \chr_\Lambda\left (X_{\ac}(M)\right)$ with $f_\ac(T) \in \cO[\![T]\!]\cong \Lambda$, where the isomorphism
identifies $1+T$ with the chosen topological generator $\gamma$. Then by Theorem \ref{ACthm2}, 
$L_p(f)$ divides $F(T)$ in $R[\![T]\!]$. In particular $F(0)$ is an $R$-multiple of $L_p(f,1)$. On the other hand,
by Theorem \ref{thm:control}
$$
\#\cO/(f_\ac(0)) = \#H^1_{\SelMin}(\cK,W)\cdot C(W) < \infty.
$$
(The finiteness of the right-hand side was established in Proposition \ref{Sel-Sha prop}.)
The proposition follows.
\end{proof}

%
%
\section{Proof of the Main Theorem}\label{bsd-mainthm}

In this section we piece together the results from the previous sections to prove the main result of the paper, Theorem \ref{thm:main}.
We therefore take $E/\Q$ to be an elliptic curve as in that theorem. In particular, we assume:
\begin{itemize}
\item $E$ is a semistable elliptic curve with square-free conductor $N$;
\item $E$ has good reduction at the the prime $p$ (i.e., $p\nmid N$); 
\item if $p=3$ and $E$ has supersingular reduction at $p$, then $a_p(E)=0$;
\item the residual representation $\bar\rho_{E,p}:G_\Q\ra \Aut(E[p])$ is irreducible;
\item  $\ord_{s=1}L(E,s) = 1$.  
\end{itemize}
The results of Gross--Zagier and Kolyvagin then imply that $\rk_\Z E(\Q) =1$ and $\Sha(E/\Q)$ 
is finite. To prove Theorem \ref{thm:main} we must show that the same power of $p$ appears in both sides of \eqref{eq:bsd-formula}.
Since Conjecture \ref{conj:bsd} is isogeny invariant  --  more precisely, the ratio of both sides of 
\eqref{eq:bsd-formula}, when both are finite, is an invariant of the isogeny class of $E$ (cf.~\cite[Thm.~2.1]{tate:bsd}) -- we may further assume:
\begin{itemize}
\item $E$ admits an optimal quotient map $\pi:J(X_0(N))\rightarrow E$ (that is, such that the kernel of $\pi$ is connected).
\end{itemize}   

\subsection{The Birch and Swinnerton-Dyer Conjecture} We will eventually deduce both the conjectured upper and lower bounds
on $\#\Sha(E/\Q)[p^\infty]$ from corresponding upper and lower bounds for $\#\Sha(E/\cK)[p^\infty]$ for suitable imaginary quadratic fields $\cK$.
For this reason we find it helpful to recall the general Birch and Swinerton-Dyer conjecture for an elliptic curve over a number field. As
stated by Tate \cite[(A)~ and~(B)]{tate:bsd}, this is:

\begin{conj}[general Birch and Swinnerton-Dyer Conjecture]\label{conj:bsd-gen}
Let $F$ be a number field and let $\cE /F$ be an elliptic curve over $F$. 
\begin{itemize}
\item[\rm (a)] The Hasse--Weil $L$-function $L(\cE/F, s)$ has an analytic continuation to 
the entire complex plane and $\ord_{s = 1}L(\cE/F, s) = \rk_{\Z} \cE(F)$;
\item[\rm (b)] The Tate-Shafarevich group $\Sha(\cE/F)$ has finite order, and 
\begin{equation}\label{eq:bsd-formula-gen}
\frac{L^{(r)}(\cE/F, 1)}{r! \cdot \Omega_{\cE/F} \cdot \Reg(\cE/F) \cdot |\Delta_F|^{-1/2}} = \frac{\# \Sha(\cE/F) \cdot \prod_{v \nmid\infty} c_v(\cE/F)}{\# \cE(F)_{\tors}^2}, 
\end{equation}
where $r = \ord_{s = 1}L(\cE/F, s)$, 
$c_v(\cE/F) = [\cE(F_v) : \cE^0(F_v)]$ is the Tamagawa number at $v$ for a finite place $v$ of $F$, $\Reg(\cE/F)$ is the regulator 
of the N\'eron-Tate height pairing on $\cE(F)$, $\Delta_F$ is the discriminant of $F$, and
$\Omega_{\cE/F}\in\C^\times$ is the period defined by 
\begin{equation}\label{eq:per}
\Omega_{\cE/F} = \mathrm{N}_{F/\Q}(\gra_\omega)\cdot \prod_{\substack{v \mid \infty \\ v \text{-real}}} \int_{\cE(F_v)} |\omega| \cdot \prod_{\substack{v \mid \infty \\ v \text{-complex}}} \left ( 2 \cdot  \int_{\cE(F_v)} \omega \wedge \overline{\omega} \right ). 
\end{equation}
Here $\omega \in \Omega^1(\tilde\cE/\cO_F)$ is any non-zero differential on the N\'eron model $\tilde\cE$ of $\cE$ over $\O_F$, and 
$\mathfrak{a}_\omega\subset F$ is the fractional ideal such that  $\mathfrak{a}_\omega\cdot\omega = \Omega^1(\tilde\cE/\O_F)$.
Also, for a finite place $v$, $\cE^0(F_v) \subset \cE(F_v)$ denotes the subgroup of local points that specialize to the identity component of the N\'eron model of $\cE$ at the place $v$.
\end{itemize}
\end{conj}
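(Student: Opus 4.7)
The plan is to decompose Conjecture \ref{conj:bsd-gen} into two largely independent problems and attack them prime-by-prime. Part~(a), the order of vanishing, splits further into (a1) the analytic continuation of $L(\cE/F,s)$ and (a2) the rank equality $\rk_\Z \cE(F) = \ord_{s=1}L(\cE/F,s)$. Part~(b), the exact formula, would be established by proving the $p$-part of \eqref{eq:bsd-formula-gen} for every prime $p$ separately and using that the rational number on the left of \eqref{eq:bsd-formula-gen} has only finitely many nontrivial local valuations.

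For step (a1), I would first require $\cE$ to be modular over $F$ in a suitable sense: over $\Q$ this is Wiles and collaborators, over totally real and CM fields it is now largely known, and over a general number field one would proceed conditionally on potential modularity. Granted this, $L(\cE/F,s)$ has the expected analytic continuation and functional equation. For step (a2), I would combine an Euler-system bound on a Selmer group over $F$ with the descent sequence \eqref{FES} to get $\rk \leq \ord$ in corank-one situations, extending the Kolyvagin machinery in Section~\ref{bsd-cmpts} via CM points on appropriate Shimura varieties attached to quadratic CM extensions $\cK/F$. The reverse inequality $\rk \geq \ord$ would be handled in analytic rank~$0$ and~$1$ by Gross--Zagier-type height formulas on these Shimura varieties; for analytic rank $\geq 2$ there is presently no mechanism at all.

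For part (b), after fixing $p$ I would choose an auxiliary imaginary quadratic extension $\cK/F$ (requiring $F$ to admit such, and verifying a Heegner-type hypothesis as in \eqref{gen-H}) such that $L(\cE^{\cK}/F, s)$ has low vanishing order, and then run the two-sided argument of the present paper over $\cK$. The lower bound on $\# \Sha(\cE/\cK)[p^\infty]$ would come from the anticyclotomic control theorem (a generalization of Theorem~\ref{thm:control}), the one-sided main-conjecture divisibility of Theorem~\ref{ACthm2}, Burungale's vanishing of $\mu$, and Brooks' formula (Proposition~\ref{prop:brooks-Af}) for $L_p(f,1)$ in terms of the logarithm of a Heegner point, coupled with a Gross--Zagier formula of Yuan--Zhang--Zhang type on the relevant Shimura curve. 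The upper bound would come from the Kolyvagin system of Section~\ref{bsd-cmpts}. Descent from $\cK$ to $F$ would use the $\cK$-twist $\cE^{\cK}/F$, for which one would need the opposite-direction inequality already known (through cyclotomic main conjectures extended to $F$). Combining both bounds over $\cK$ and $F$, and invoking the period comparison and Tamagawa recipe of Ribet--Takahashi at the non-split primes, would yield the $p$-part of the BSD formula.

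The hard part, by a wide margin, is the rank equality for analytic rank $\geq 2$: every current approach to $\rk \geq \ord$ produces essentially one rational point per CM divisor and so cannot generate Selmer corank greater than one. The other genuine obstacles are (i) the unavailability of the Eisenstein-congruence input to the main conjecture outside the base-change settings over $\Q$ or small totally real fields handled in \cite{wan:rankin, wan:rankin-ss, skinner-urban:gl2}, (ii) the fact that existence of an imaginary quadratic extension $\cK/F$ with the requisite Heegner conditions and admitting a Gross--Zagier formula is restrictive, and (iii) the need for modularity over arbitrary $F$, which is not known in general. Accordingly, the realistic target of the proposed program is not the full conjecture, but its $p$-part for elliptic curves of analytic rank $0$ or $1$ over number fields that are either $\Q$, totally real, or CM, with irreducible residual representation at $p$ -- generalizing Theorem~\ref{thm:main} by replicating its Iwasawa-theoretic backbone in each such setting once the missing modularity and main-conjecture inputs become available.
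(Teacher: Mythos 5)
The statement you were handed is a \emph{conjecture}, not a theorem: the paper states Conjecture~\ref{conj:bsd-gen} (following Tate) purely as a reference point for the bounds it establishes over imaginary quadratic fields, and never claims a proof of it in any generality. There is therefore no ``paper's own proof'' to compare against. What the paper actually proves is Theorem~\ref{thm:main}, namely the $p$-part of formula~\eqref{eq:bsd-formula} for semistable $E/\Q$ of analytic rank one with $p \geq 3$ a good prime and $E[p]$ irreducible; the general conjecture over an arbitrary number field $F$ is invoked only so that the two-variable argument over $\cK'$ and $\cK''$ can be phrased cleanly.

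Your proposal is not a proof and, to your credit, you do not pretend it is. Your diagnosis of the obstructions is accurate and essentially matches the paper's own scope limitations: (i) no mechanism exists for $\rk \geq \ord$ when the analytic rank is $\geq 2$; (ii) the Eisenstein-congruence input to the main conjectures (Theorem~\ref{MCthm1}, via \cite{wan:rankin,wan:rankin-ss}) is presently available only in the settings relevant to base change to an imaginary quadratic field over $\Q$; (iii) modularity over a general number field $F$ is open; (iv) the Heegner/Shimura-curve machinery and the Gross--Zagier formula require $F$ to admit a CM quadratic extension with the right local behavior. You have also correctly reconstructed the backbone of the rank-one argument over $\Q$ (control theorem, one-sided main conjecture divisibility, vanishing of $\mu$, Brooks' $L_p(f,1)$ formula, Kolyvagin upper bound, descent through quadratic twists). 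The one thing to be careful about is framing: what you are offering is a survey of what a proof \emph{would} require, and a correct identification of which fragments are within reach, not a proof of the conjecture as stated. That is the right and honest conclusion, and it is consistent with how the paper itself treats the statement.
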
 

\noindent When $F=\Q$,  we generally write $\Omega_\cE$ for $\Omega_{\cE/\Q}$.

\subsection{The Birch--Swinnertion-Dyer formula for rank zero elliptic curves}
To pass from the expected upper and lower bounds for $\#\Sha(E/\cK)[p^\infty]$ for suitable imaginary quadratic fields $\cK=\sqrt{D}$ ($D<0$) to the expected upper or lower bound
for $\#\Sha(E/\Q)[p^\infty]$, we will need to appeal to known results for the BSD formula for the $\cK$-twists\footnote{If $E$ has Weierstrass equation $y^2=x^3+Ax+B$, then 
the $\cK$-twist of $E$ is the curve $E^D/\Q$ having 
Weierstrass equation $Dy^2=x^3+Ax+B$. If $f$ is the newform associated with $E$, then the newform associated with $E^D$ is just the 
newform $f_\cK$ associated with the twist of $f$ by the quadratic character $\chi_\cK$ of $\cK$.} 
$E^D$ of $E$. The fields $\cK$ will always be chosen so that the groups
$E^D(\Q)$ have rank $0$.  We therefore recall the known results about the $p$-part of the BSD formula for rank $0$ curves.

In the theorem below we summarize the already known results on the Birch and Swinnerton-Dyer conjecture for both 
ordinary and supersingular elliptic curves of analytic rank zero. The inequality of part (i) of the theorem is a consequence of Kato's groundbreaking work on an Euler system for
elliptic curves (see~\cite[Thm.4.8]{perrin-riou:expmath}). The equality of part (ii) for the ordinary case is proved in  \cite[Thm.2]{skinner-urban:gl2}, \cite[Thm.C]{skinner:multred} 
as a consequence of the proof Iwasawa main conjecture for $\GL_2$, and that of part (iii) for the supersingular case is a consequence of the proof of Kobayashi's main conjecture 
\cite[Cor.4.8]{wan:kobayashi}.

\begin{thm}\label{thm:rank0} Let $\cE/\Q$ be an elliptic curve with good or multiplicative reduction at the odd prime $p$ and suppose that $\rhobar_{\cE, p} \colon \Gal(\overline{\Q} / \Q) \ra \Aut(\cE[p])$ is irreducible. Suppose $L(\cE,1)\neq 0$.
\begin{itemize}
\item[\rm (i)] One has
\begin{equation}\label{sha:rank0ineq}
\ord_p(\# \Sha(\cE/\Q)[p^\infty]) \leq \ord_p \left ( \frac{L(\cE/\Q, 1)}{\Omega_{\cE} \cdot \prod_{\ell} c_{\ell}(\cE)} \right ). 
\end{equation}
\item[\rm (ii)] If $\cE$ has good ordinary or multiplicative reduction at $p$ and there exists a prime $q$ of multiplicative reduction for $E$ at which the representation $\rhobar_{\cE, p}$ is ramified, then 
\begin{equation}\label{sha:rank0ord}
\ord_p(\# \Sha(\cE/\Q)[p^\infty]) = \ord_p \left ( \frac{L(\cE/\Q, 1)}{\Omega_{\cE} \cdot \prod_{\ell} c_{\ell}(\cE)} \right ).  
\end{equation}
\item[\rm (iii)] If $\cE$ is semistable or a twist of a semistable elliptic curve by a quadratic character that is unramified at the primes dividing the conductor of the semistable curve
and if $\cE$ has supersingular reduction at $p$ with $a_p(\cE)=0$, then 
\begin{equation}\label{sha-twist:exact}
\ord_p(\# \Sha(\cE/\Q)[p^\infty]) = \ord_p \left ( \frac{L(\cE/\Q, 1)}{\Omega_{\cE} \cdot \prod_{\ell} c_{\ell}(\cE)} \right ).  
\end{equation}
\end{itemize}
\end{thm}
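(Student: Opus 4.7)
My plan is to deduce the three parts of Theorem~\ref{thm:rank0} by combining the known results in cyclotomic Iwasawa theory for $\cE$ with appropriate control theorems at the trivial character. In each case I will work over the cyclotomic $\Zp$-extension $\Q_{\cyc}/\Q$ with Galois group $\Gamma_{\cyc}$, Iwasawa algebra $\Lambda_{\cyc} = \Zp[[\Gamma_{\cyc}]]$, and the Pontrjagin dual $X_{\cyc}(\cE)$ of a suitable Selmer group for $\cE[p^\infty]$ over $\Q_{\cyc}$, where the local condition at $p$ is the Greenberg condition in the ordinary or multiplicative case and Kobayashi's $\pm$-condition in the supersingular case. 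The hypothesis $L(\cE,1)\neq 0$ will be used to guarantee that $X_{\cyc}(\cE)$ is $\Lambda_{\cyc}$-torsion and that the relevant $p$-adic $L$-function is non-zero at the trivial character, so that both sides of the control theorem are finite.

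For part (i), I would invoke Kato's theorem that the $p$-adic $L$-function $L_p(\cE)\in\Lambda_{\cyc}$ divides the characteristic ideal of $X_{\cyc}(\cE)$ up to a power of $p$; this is a packaging of Kato's Beilinson--Kato Euler system together with his explicit reciprocity law via the Kolyvagin--Rubin machinery (cf.~\cite[Thm.~4.8]{perrin-riou:expmath}). I would then evaluate at the trivial character via the Greenberg-type control theorem of \cite{greenberg:iwasawa-elliptic}: the interpolation formula for $L_p(\cE)$ at $s=1$ matches $L(\cE,1)/\Omega_{\cE}$ up to the Euler factor at $p$ and a torsion contribution, while the kernel/cokernel terms in the control theorem contribute exactly the Tamagawa product and the missing local factor at $p$. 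The net result is the inequality \eqref{sha:rank0ineq}. The only delicate bookkeeping is in the multiplicative reduction case, where the modified control theorem in \cite{skinner:multred} is needed to absorb the Euler factor at primes of split multiplicative reduction.

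For part (ii), I would combine (i) with the opposite divisibility, which is the main result of \cite{skinner-urban:gl2} in the good ordinary case and of \cite{skinner:multred} in the multiplicative case. That divisibility is obtained via congruences between Klingen Eisenstein series on $\GU(2,2)$ and cuspforms; the hypothesis that $\rhobar_{\cE,p}$ is ramified at a prime $q$ of multiplicative reduction is precisely what verifies the irreducibility/non-Eisenstein residual conditions and the vanishing of the analytic $\mu$-invariant required there. Together the two divisibilities give the cyclotomic main conjecture as an equality of ideals in $\Lambda_{\cyc}$, and evaluating at the trivial character via the same control theorem as in part~(i) upgrades \eqref{sha:rank0ineq} to the equality \eqref{sha:rank0ord}. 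The main obstacle in this step is to confirm that no spurious $\mu$-invariant enters either side: in the good ordinary case this follows from a result of Greenberg, and in the multiplicative case it is handled by the constructions in \cite{skinner:multred}.

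For part (iii), I would reproduce the same two-step argument in Kobayashi's supersingular framework, where the relevant objects are the $\pm$-Selmer groups $X_{\cyc}^{\pm}(\cE)$ and the $\pm$-$p$-adic $L$-functions $L_p^{\pm}(\cE)\in\Lambda_{\cyc}$. Kato's Euler system still furnishes one divisibility
\[
(L_p^{\pm}(\cE)) \subseteq \chr_{\Lambda_{\cyc}}(X_{\cyc}^{\pm}(\cE)),
\]
and the reverse divisibility is the content of \cite[Cor.~4.8]{wan:kobayashi}, proved by a supersingular adaptation of the Eisenstein-congruence strategy on $\GU(2,2)$. The resulting equality, combined with Kobayashi's $\pm$-control theorem at the trivial character, yields \eqref{sha-twist:exact}. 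The semistability hypothesis (or its quadratic-twist generalization, unramified at the primes of bad reduction of the underlying semistable curve) is used to ensure that the ramification hypotheses in \cite{wan:kobayashi} hold and that the simplest form of the $\pm$-local theory applies at~$p$, where $a_p(\cE)=0$. The hardest step here is the bookkeeping needed to verify that the two factors $L_p^{\pm}(\cE,1)$ combine with the outputs of the $\pm$-control theorem to reproduce precisely $L(\cE,1)/\bigl(\Omega_{\cE}\prod_\ell c_\ell(\cE)\bigr)$; this is the supersingular analogue of the Euler-factor computation in part~(i) and is the place where the restriction $a_p=0$ is genuinely used.
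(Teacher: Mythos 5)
Your proposal takes essentially the same approach as the paper, which itself gives no proof of Theorem~\ref{thm:rank0} but simply cites the same sources: Kato's Euler system (via \cite[Thm.~4.8]{perrin-riou:expmath}) for part~(i), \cite[Thm.~2]{skinner-urban:gl2} and \cite[Thm.~C]{skinner:multred} for part~(ii), and \cite[Cor.~4.8]{wan:kobayashi} for part~(iii); your expansion of those citations through the cyclotomic (resp.\ $\pm$-) control theorem at the trivial character is the standard way those references yield the stated formulas, so there is no genuine gap.

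One small slip worth fixing: in part~(i) you phrase Kato's theorem as ``$L_p(\cE)$ divides the characteristic ideal of $X_{\cyc}(\cE)$,'' but that is the Skinner--Urban direction. Kato's Euler system proves the opposite containment $(L_p(\cE)) \subseteq \chr_{\Lambda_{\cyc}}(X_{\cyc}(\cE))$ (equivalently, a generator of $\chr_{\Lambda_{\cyc}}(X_{\cyc}(\cE))$ divides $L_p(\cE)$), which is exactly what you write --- correctly --- in part~(iii). That is the direction that bounds the Selmer group from above at the trivial character and hence produces the upper bound \eqref{sha:rank0ineq}; as written, part~(i) would appear to give a lower bound instead.
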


\begin{rmk} Note that the condition that $a_p(\cE)=0$ in part (iii) of this theorem is superfluous if $p\geq 5$.
\end{rmk}

\begin{rmk}\label{rmk:wan}  Part (iii) of the above theorem is slightly more general than the  
result cited in \cite{wan:kobayashi}, where the elliptic curve is assumed to be  
semistable. However the same proof extends to the case of the quadratic twist of a semistable curve. 
The only  reason for putting the semistable assumption there is to avoid local  
triple product integrals for supercuspidal representations at split  
primes in \cite{wan:rankin}, \cite{wan:rankin-ss}, which is excluded under the assumption of  the above theorem. 
The results of Hsieh and Hung  on  
non-vanishing (or non-vanishing modulo $p$) of $L$-values and  
vanishing of $\mu$-invariants are also valid in the more general setting of the quadratic twist.
\end{rmk}

\subsection{Comparison of Tamagawa numbers and periods for quadratic twists}
We will derive both upper and lower bounds on $\#\Sha(E/\Q)[p^\infty]$ from corresponding bounds on $\#\Sha(E/\cK)[p^\infty]$ for suitable choices of imaginary quadratic fields 
$\cK=\Q ( \sqrt{-D} )$.
In order to derive from this the exact upper and lower bounds predicted by Conjecture \ref{conj:bsd}, we recall here the relations between the Tamagawa numbers and periods of $E$ over $\cK$ and the Tamagawa numbers and periods of $E$ and its quadratic twist\footnote{If $E$ has Weierstrass equation $y^2=x^3+Ax+B$, then by $E^D$ we mean the elliptic curve over $\Q$ having 
Weierstrass equation $-Dy^2=x^3+Ax+B$. This is just the $\cK$-twist of $E$. If $f$ is the newform associated with $E$, then the newform associated with $E^{D}$ is just the 
newform $f_\cK$ associated with the twist of $f$ by the quadratic character of $\cK$.} $E^{D}$.
These are discussed in detail in \cite{skinner-zhang}.  

\subsubsection{Tamagawa numbers} 
We recall \cite[Cor.9.2.]{skinner-zhang} that 
\begin{equation}\label{eq:tamK}
\ord_p\left(\prod_{w} c_w(E/\cK)\right) = \ord_p\left(\prod_{\ell} c_{\ell}(E/\Q) \cdot \prod_{\ell} c_{\ell}(E^D / \Q)\right).
\end{equation}
On the left-hand side $w$ is running over finite places of the imaginary quadratic field $\cK=\Q(\sqrt{D})$ of discriminant $D<0$, and on
the right-hand side $\ell$ is running over all primes. Here we have also used that $E$ has good reduction at $p$ and so has Tamagawa
number $1$ at any place dividing $p$.

\subsubsection{Periods and comparisons} 
For the elliptic curve $E/\Q$, one defines its real period to be 
$$
\ds \Omega_{E} = \int_{E(\R)} |\omega|.
$$ 
Here $\omega_E$ is a N\'eron differential (a $\Z$-basis
of the module of differentials of the N\'eron model of $E$ over $\Z$). If $f \in S_2(\Gamma_0(N))$ is the normalized cuspidal eigenform 
associated with $E$ (that is, satisfying $L(f,s) = L(E,s)$) then, as explained in \cite[\S 9.2]{skinner-zhang}, one also
defines canonical periods $\Omega_f^+,\Omega_f^- \in \C^\times$; these are defined up to $\Z_{(p)}^\times$-multiples (this can be
refined to $\Z^\times$-multiples, but we do not need this here). Furthermore, as recalled in \cite[\S 9.3]{skinner-zhang}, one also 
has the {congruence (or Hida) period}
$$
\Omega_f^{\mathrm{cong}}= \frac{\langle f, f\rangle}{\eta_f} \in \overline{\Q}_p^\times,
$$
where $\langle f, f\rangle$ is the Petersson norm and $\eta_f$ is the congruence number of $f$ recalled in {\it loc.~cit.};
to make sense of this definition we use our chosen isomorphism $\bQp\cong\C$. Recall that $\Omega_f$ is only 
well-defined up to $\Z_p^\times$-multiple. We next recall the relations between these various periods.

It is explained in \cite[\S 3.3]{skinner:multred} (see also \cite[Prop.~3.1]{greenberg-vatsal}) that 
\begin{equation}\label{eq:omegaE=omegaf}
\Omega_E = -2\pi i \Omega_f^+ \ \ \text{and} \ \ \Omega_{E^D}=-2\pi i\Omega_{f_\cK}^+, 
\end{equation}
up to $\Z_{(p)}^\times$-multiples. 
By \cite[Lem.~9.5]{skinner-zhang} the congruence period can be chosen to satisfy 
\begin{equation}\label{eq:congperiod}
\Omega_f^{\mathrm{cong}} = i(2\pi)^2\Omega_f^+\Omega_f^-.
\end{equation}
Let $f_\cK$ be the newform associated with the twist of $f$ by the quadratic character of the imaginary quadratic field $\cK$.
Under the hypothesis that $\bar\rho_{E,p}$ is irreducible, it is shown in \cite[Lem.~9.6]{skinner-zhang} that if $p\nmid D$ then 
\begin{equation}\label{eq:period of twist}
\Omega_{f}^{\pm} = \sqrt{-D}\cdot \Omega_{f_\cK}^{\mp},
\end{equation}
up to a $\Z_p^\times$-multiple. Note that the statement of {\it loc.~cit.} omits the factor $\sqrt{-D}$ (which equals the Gauss sum $\tau(\chi_{\cK})$ up to sign); this is because it is assumed
there that $p$ splits in $\cK$ (also assumed here) and so $\sqrt{-D}\in \Z_p^\times$.  In particular, combining \eqref{eq:omegaE=omegaf},
\eqref{eq:congperiod}, and \eqref{eq:period of twist} we find:
\begin{equation}\label{eq:period product}
\Omega^{\mathrm{cong}}_f = \sqrt{|D|} \cdot \Omega_E \Omega_{E^D}.
\end{equation}

\begin{rmk}\label{rem:maninconst}
The comparison between the periods $\Omega_{E}$ of $\Omega_{E^D}$ of the elliptic curves and 
$\Omega_f^+$ and $\Omega_{f_\cK}^+$ is often done in terms of what is known as the Manin constants, as explained in \cite[\S 3]{greenberg-vatsal}. 
It is known by a result of Mazur that if $p\nmid 2ND$, then $p$ does not divide either of the Manin constants (see, e.g., \cite[\S 1]{jetchev:tamagawa}).  
\end{rmk}

\subsection{The final argument} 
Let $f \in S_2(\Gamma_0(N))$ be the Hecke eigenform associated to $E$. 
Then $(V,T,W) = (V_f,T_f,W_f) = (V_p E, T_pE, E[p^\infty])$, $\cO=\Zp$, and $\overline{V} \cong E[p]$. 
Note that there exists at least one prime $q \mid N$ such that the mod $p$ representation $\bar\rho_{E,p}$ is ramified at $q$. 
If not, then Ribet's level lowering theorem \cite[Thm.1.1]{ribet:modreps} yields a cuspform $g$ of weight 2 and level 1 with mod $p$ residual representation
isomorphic to $\bar\rho_{E,p}$ (we apply Ribet's theorem to remove, one-by-one, the primes dividing $N$). This is a contradiction as there are no cuspidal eigenforms
of weight $2$ and level $1$.  Let $N = q_1 \dots q_r$ with $q_1 = q$. 

\subsubsection{Lower bounds on $\# \Sha(E/\Q)[p^\infty]$}\label{subsubsec:lower}
We first choose an auxiliary imaginary quadratic field $\cK' = \Q(\sqrt{D'})$ of discriminant $D'<0$ such that 
\begin{itemize}
\item[(a)] $N$ and $\cK'$ satisfy \eqref{gen-H};
\item[(b)] $q$ is either inert or ramified in $\cK'$;
\item[(c)] $p$ splits in $\cK'$;
\item[(d)] $L(E^{D'},1)\neq 0$.
\end{itemize}
It is easy to find $\cK'$ such that (a), (b) and (c) hold. Indeed, if $r \geq 2$ (i.e., $N$ has at least two prime divisors), then we can even guarantee that 
\eqref{H} holds by requiring that $q$ be inert in $\cK'$. In the case where $N = q$ is prime, we take $N^- = 1$ and $N^+ = q$, which only satisfies \eqref{gen-H}.
We note that for any $\cK'$ for which (a), (b), and (c) hold, the root number $w(E/\cK')$ of $E/\cK'$ is $-1$ (cf.~\eqref{sign-1}). As $w(E/\Q)=-1$ by hypothesis 
(this is a consequence of $\ord_{s=1}L(E,s)$ being odd) and 
$w(E/\cK')=w(E/\Q)w(E^{D'}/\Q)$, we have $w(E^{D'})=+1$. As (a), (b), and (c) impose only finitely many congruence conditions on the
discriminant of $\cK'$, it then follows from a result of Friedberg and Hoffstein \cite[Thm.~B]{friedberg-hoffstein:annals} that $\cK'$ can be chosen so that (d) also holds.
Note that the condition (d) means that $\ord_{s=1} L(E/\cK',s) = 1$. In particular, by the work of Gross--Zagier and Kolyvagin we know that $E(\cK')$ has
rank one and that $\Sha(E/\cK')$ is finite.

To take a first step towards a lower bound on $\#\Sha(E/\cK')[p^\infty]$ we want to appeal to the Proposition~\ref{Lpf1=Sel}. So we first check that the hypotheses of that proposition 
hold with $\cK=\cK'$. The conditions \eqref{split}, \eqref{gen-H}, \eqref{good}, and \eqref{sqfree} either follow immediately from our hypotheses on $E$ or from the choice of $\cK'$.
The condition \eqref{irredK} is an easy consequence of the hypotheses that $\bar\rho_{E,p}$ is irreducible and that $\bar\rho_{E,p}$ is ramified at the prime $q=q_1\mid\mid N$
(see~\cite[Lem.~2.8.1]{skinner:conversegz}). Finally, note that \eqref{crk1} and \eqref{surjp} also hold as $\rk E(\cK')=1$ and $\#\Sha(E/\cK')[p^\infty]<\infty$ 
(cf.~Section \ref{subsec:newform-app}). 

By Proposition~\ref{Lpf1=Sel} (with $\cK=\cK'$) 
we have
\begin{equation}\label{eq:lowerbound-1}
\ord_p\left(L_p(f,1)\right)\leq \ord_p\left(\#\H^1_{\SelMin}(\cK',E[p^\infty])\cdot C(E[p^\infty])\right).
\end{equation}
Let $z_{\cK'} = z_{\cK'}^{N^+,N^-}\in E(\cK')$ be the Heegner point defined in Section \ref{subsec:cmpts}. (Note that the 
hypothesis \eqref{irredK} ensures that there are many good auxiliary primes $\ell_0$.) 
As the hypotheses of Proposition~\ref{prop:brooks-Af} are also clearly satisfied, the left-hand side of \eqref{eq:lowerbound-1} is
$$
\ord_p(L_p(f,1)) = 2\cdot \ord_p\left(\frac{1+p-a_p}{p} \log_{\omega_E}(z_{\cK'})\right).
$$
As $L_p(f,1)\neq 0$ (also by Proposition \ref{Lpf1=Sel}), this implies that $z_{\cK'}$ has infinite order (of course, this is also a consequence of the 
general Gross--Zagier formula for $z_{\cK'}$). Let 
$$
m_{\cK'} = [E(\cK'):\Z\cdot z_{\cK'}].
$$
This index is finite.  The hypotheses of Section \ref{subsec:newform-app} are also all satisfied, so
it follows from \eqref{eq:modabvar-ec} and the definition of $C(E[p^\infty])$ that the left-hand side of \eqref{eq:lowerbound-1} is
\begin{equation*}\begin{split}
\ord_p\left(\#\H^1_{\SelMin}(\cK',E[p^\infty]\cdot C(E[p^\infty])\right) = \ord_p & \left(\#\Sha(E/\cK')\right) - 2\cdot\ord_p(m_{\cK'})  \\ &+ 2\cdot\ord_p\left(\frac{1+p-a_p}{p} \log_{\omega_E}(z_{\cK'})\right) \\
& + \ord_p \left (\prod_{w\mid N^+} c_w(E/\cK') \right ).
\end{split}\end{equation*}
We then conclude from all this that 
\begin{equation}\label{eq:shalowerK-1}
\ord_p\left(\#\Sha(E/\cK')[p^\infty]\right) \geq  2\cdot\ord_p(m_{\cK'}) - \ord_p \left (\prod_{w\mid N^+} c_w(E/\cK') \right).
\end{equation}

To pass from the inequality \eqref{eq:shalowerK-1} to one involving the derivative $L'(E/\cK',1)$ we use the variant of the Gross--Zagier formula
 for the Heegner point\footnote{Though stated in terms of $z_{\cK'}$ in \cite{zhang:selheeg}, the formula in {\it loc.~cit.} is a special case of a formula in 
 \cite{cai-tian:gz}, which is expressed in terms of a `Heegner point' defined using the map $1/m\cdot \tilde\iota_{N^+,N^-}$ from \eqref{eq:cohtriv1} in place of $\iota_{N^+,N^-}$. However, 
 it is clear from \eqref{eq:cohtriv-relation} that replacing this point with $z_{\cK'}$ only changes the formula by a $p$-adic unit.}
$z_{\cK'}$ \cite[p.~245]{zhang:selheeg}: up to a unit in $\Z_p^\times$,
\begin{equation*}
\sqrt{|D'|}\frac{L'(E/\cK',1)}{\Omega_f^{\rm cong}} = \frac{\delta(N,1)}{\delta(N^+,N^-)} \frac{\langle z_{\cK'},z_{\cK'}\rangle_{NT}}{c_E^2},
\end{equation*}
where $c_E\in \Z$ is the Manin constant of $E$ (so $p\nmid c_E$ in this case; see Remark \ref{rem:maninconst}) and $\delta(N,1)$, $\delta(N^+,N^-)$
are defined in {\it loc.~cit.} In particular, as explained on the same page of {\it loc.~cit.},
$$
\delta_{N^+,N^-} : = \frac{\delta(N,1)}{\delta(N^+,N^-)}  = \prod_{\ell\mid N^-} c_\ell(E/\cK')
$$
up to a unit in $\Z_p^\times$. 
We can then rewrite the Gross--Zagier formula for $z_{\cK'}$ as an equality up to $p$-adic unit:
\begin{equation}\label{eq:gz for K'}
\frac{L'(E,1)}{\Omega_E\cdot\Reg(E/\Q)}\cdot \frac{L(E^{D'},1)}{\Omega_{E^{D'}}} = m_{\cK'}^2 \cdot \prod_{\ell\mid N^-} c_\ell(E/\cK').
\end{equation}
Here we have used that $\langle z_{\cK'},z_{\cK'}\rangle_{NT} = m_{\cK'} \Reg(E/\cK')$, $\Reg(E/\cK') = \Reg(E/\Q)$ 
(as $E^{D'}(\Q)$ is finite), $L'(E/\cK') = L'(E,1)L(E^{D'},1)$, and the period relation \eqref{eq:period product}.
From \eqref{eq:gz for K'} we obtain
\begin{equation*}\label{eq:gz for m}
2\cdot\ord_p(m_{\cK'}) = \ord_p\left(\frac{L'(E,1)}{\Omega_E\cdot\Reg(E/\Q)}\cdot \frac{L(E^{D'},1)}{\Omega_{E^{D'}}}\right) - \ord_p\left(\prod_{\ell\mid N^-} c_\ell(E/\cK')\right).
\end{equation*}
Combining this with \eqref{eq:shalowerK-1} and \eqref{eq:tamK} (with $\cK=\cK'$) we obtain
\begin{equation*}\label{eq:shalowerK-2}
\ord_p(\#\Sha(E/\cK')[p^\infty]) \geq \ord_p\left(\frac{L'(E,1)}{\Omega_E\cdot\Reg(E/\Q)\prod_{\ell}c_\ell(E/\Q)}\cdot \frac{L(E^{D'},1)}{\Omega_{E^{D'}}\prod_\ell c_\ell(E^{D'}/\Q)}\right).
\end{equation*}
As $\Sha(E/\cK')[p^\infty] \cong \Sha(E/\Q)[p^\infty]\oplus \Sha(E^{D'}/\Q)[p^\infty]$, from 
the above lower bound on $\ord_p(\#\Sha(E/\cK')[p^\infty])$ and Theorem \ref{thm:rank0}(i) (really \eqref{sha:rank0ineq}
for $\cE = E^{D'}$) we conclude that
\begin{equation}\label{eq:shalower}
\ord_p(\#\Sha(E/\Q)[p^\infty]) \geq \ord_p\left(\frac{L'(E,1)}{\Omega_E\cdot\Reg(E/\Q)\prod_{\ell}c_\ell(E/\Q)}\right).
\end{equation}
That is, we have proved the exact conjectured lower bound on $\#\Sha(E/\Q)[p^\infty]$.

\subsubsection{Upper bounds on $\# \Sha(E/\Q)[p^\infty]$}\label{subsubsec:upper} 
Recall that $N=q_1\cdots q_r$ with $q_1=q$ such that $p\nmid c_q(E/\Q)$. 
If $r$ is odd, let $N^+ = q_1$ and $\ds N^- = N/q_1$. If $r$ is even, let $N^+ = 1$ and $N^- = N$.
We choose a second auxiliary imaginary quadratic field $\Q(\sqrt{D''})$ of discriminant $D''<0$ such that
\begin{itemize}
\item[(a)] the primes dividing $N^+$ split in $\cK''$;
\item[(b)] the primes dividing $N^-$ are all inert in $\cK''$;
\item[(c)] $p$ splits in $\cK''$;
\item[(d)] $L(E^{D''}, 1) \ne 0$. 
\end{itemize}
Note that \eqref{H} holds for $N$ and any $\cK''$ satisfying (a), (b), and (c). As with the choice of $\cK'$ above, the root number of the quadratic twist $E^{D''}$ is $+1$ and the result
of Friedberg and Hoffstein ensures that $\cK''$ can be chosen so that (d) also holds. 

Note that by (d) we have $\ord_{s=1}L(E/\cK'',s) = 1$. Let $z_{\cK''} = z_{\cK''}^{N^+,N^-}\in E(\cK'')$ be the Heegner point defined in Section \ref{subsec:cmpts} and 
let $m_{\cK''} = [E(\cK''):\Z\cdot z_{\cK''}]$. From  Theorem \ref{thm:shaK} we obtain
\begin{equation*}
\ord_p(\#\Sha(E/\cK'')[p^\infty]) \leq 2\cdot\ord_p(m_{\cK''}).
\end{equation*}
From the Gross--Zagier formula for $z_{\cK''}$ we have, just as we did for $z_{\cK'}$, that
\begin{equation*}
2\cdot\ord_p(m_{\cK''}) = \ord_p\left(\frac{L'(E,1)}{\Omega_E\cdot\Reg(E/\Q)}\cdot \frac{L(E^{D''},1)}{\Omega_{E^{D''}}}\right) - \ord_p\left(\prod_{\ell\mid N^-} c_\ell(E/\cK'')\right).
\end{equation*}
For our choice of $\cK''$ there are no $w\mid N^+$ such that $p\mid c_w(E/\cK'')$, and so the product on the left-hand side can be replaced by a product over all $w$.
Combining the above equality with the preceding inequality and appealing to \eqref{eq:tamK} (for $\cK=\cK''$) we get
\begin{equation*}
\ord_p(\#\Sha(E/\cK'')[p^\infty]) \leq \ord_p\left(\frac{L'(E,1)}{\Omega_E\cdot\Reg(E/\Q)\prod_{\ell}c_\ell(E/\Q)}\cdot \frac{L(E^{D''},1)}{\Omega_{E^{D''}}\prod_\ell c_\ell(E^{D''}/\Q)}\right)
\end{equation*}
Appealing to part (ii) or (iii) of Theorem \ref{thm:rank0} (for $\cE=E^{D''}$) then yields
\begin{equation}\label{eq:shaupper}
\ord_p(\#\Sha(E/\Q)[p^\infty]) \leq \ord_p\left(\frac{L'(E,1)}{\Omega_E\cdot\Reg(E/\Q)\prod_{\ell}c_\ell(E/\Q)}\right).
\end{equation}
That is, we have proved the exact conjectured upper bound on $\#\Sha(E/\Q)[p^\infty]$.

\subsubsection{The final step: equality} Combining \eqref{eq:shalower} and \eqref{eq:shaupper} gives
$$
\ord_p(\#\Sha(E/\Q)[p^\infty]) = \ord_p\left(\frac{L'(E,1)}{\Omega_E\cdot\Reg(E/\Q)\prod_{\ell}c_\ell(E/\Q)}\right),
$$
proving Theorem \ref{thm:main}.

\subsubsection{Some remarks on the hypotheses in Theorem \ref{thm:main}}
\label{subsubsec:remarks} 
We make a few comments on how the various hypotheses on Theorem \ref{thm:main} intervene in its proof
and some remarks on possible generalizations.
\begin{itemize}
\item[(i)] The requirement that $N$ be square-free is made in \cite{brooks:shimura} and so appears as a hypothesis of Proposition \ref{prop:brooks}.
However, this condition can likely be dropped in light of the main results of \cite{liu-zhang-zhang}. The formulas in the latter are not as explicit as the result in 
\cite{brooks:shimura} but can made so just as the general Gross--Zagier formula is made explicit in \cite{cai-tian:gz}. 

\item[(ii)] The requirement that $N$ be square-free is also made in \cite{wan:rankin} and \cite{wan:kobayashi}, but as we have already indicated (see Remark \ref{rmk:wan})
this can be replaced with the requirement that the cuspidal automorphic representation $\pi=\otimes_{\ell\leq\infty}\pi$ associated with $E$ is either a principal series or a Steinberg representation
at each $\ell\mid N$.  Further relaxing of this conditions requires a better understanding of certain local triple product integrals.

\item[(iii)] If $p=3$ and $E$ has supersingular reduction at $p$, then we have required that $a_p(E)=0$. This is only because the same hypothesis is made in \cite{wan:kobayashi}
and so $p$-part of the BSD formula in the rank $0$ case (needed for $L(E^{D''},1)$ in the argument deducing the exact upper bound) is only known for supersingular $p$ when $a_p(E)=0$.

\item[(iv)] As much as possible, we have worked in the context of the Selmer groups of a general newform $f\in S_2(\Gamma_0(N))$ of weight $2$ and trivial character. The ultimate restriction
to the case of an elliptic curve is made for two reasons: (1) the lack of a precise reference for the upper-bound on $\#\Sha(A_f/\cK)[\grp^\infty]$ coming from the Euler system of Heegner points
(the generalization of Theorem \ref{thm:shaK}), and (2) the lack of a general result about the $\grp$-part of the special value $L(f_{\cK},1)/2\pi i\Omega_{f_{\cK}}^+$ (when $L(f_{\cK},1)\neq 0$)
when $f$ is not ordinary at $\grp$.   We expect that both of these issues will be addressed in forthcoming work.

\item[(v)] The condition that $p$ be a prime of good reduction can likely be relaxed to at least a prime of multiplicative reduction. The rank $0$ special value formulas are proved, for example,
in \cite{skinner:multred}, and the results of \cite{castella:exceptional} and \cite{liu-zhang-zhang} allows $p$ to be a prime of multplicative reduction.

\item[(vi)] That $p$ is odd is, of course, an essential hypothesis of many of the results used in the course of our proof. 
\end{itemize}

\section*{Acknowledgements}
We thank Ernest H. Brooks for a careful reading of an earlier draft of this paper and various helpful discussions. We are grateful to Christophe Cornut for his interest and multiple useful conversations on the problem. 

The work of the first named author (D.J.) was supported by Swiss National Science Foundation professorship grant PP00P2-144658. The work of the second named author (C.S.) was partially supported by the grants DMS-0758379 and DMS-1301842 from the National Science Foundation and by
the Simons Investigator grant \#376203 from the Simons Foundation. He also gratefully acknowledges the hospitality of the California Institute of Technology, where
part of this work was carried out during an extended visit in 2014.

\bibliographystyle{amsalpha}
\bibliography{biblio}

\def\cprime{$'$}
\providecommand{\bysame}{\leavevmode\hbox to3em{\hrulefill}\thinspace}
\providecommand{\MR}{\relax\ifhmode\unskip\space\fi MR }
\providecommand{\MRhref}[2]{%
  \href{http://www.ams.org/mathscinet-getitem?mr=#1}{#2}
}
\providecommand{\href}[2]{#2}
\begin{thebibliography}{Wan14b}

\bibitem[AU96]{abbes-ullmo:manin}
A.~Abbes and E.~Ullmo, \emph{\`{A} propos de la conjecture de {M}anin pour les
  courbes elliptiques modulaires}, Compositio Math. \textbf{103} (1996), no.~3,
  269--286.

\bibitem[BBV15]{berti-bertolini-venerucci}
A.~Berti, M.~Bertolini, and R.~Venerucci, \emph{Congruences between modular
  forms and the {B}irch and {S}winnerton-{D}yer conjecture}, preprint (2015).

\bibitem[BDP13]{bertolini-darmon-prasanna}
M.~Bertolini, H.~Darmon, and K.~Prasanna, \emph{Generalized {H}eegner cycles
  and {$p$}-adic {R}ankin {$L$}-series}, Duke Math. J. \textbf{162} (2013),
  no.~6, 1033--1148.

\bibitem[Bir71]{birch:bsd}
B.\thinspace{}J. Birch, \emph{Elliptic curves over \protect{${\mathbf{Q}}$:
  {A}} progress report}, 1969 Number Theory Institute (Proc. Sympos. Pure
  Math., Vol. XX, State Univ. New York, Stony Brook, N.Y., 1969), Amer. Math.
  Soc., Providence, R.I., 1971, pp.~396--400.

\bibitem[BK90]{bloch-kato}
S.~Bloch and K.~Kato, \emph{{$L$}-functions and {T}amagawa numbers of motives},
  The {G}rothendieck {F}estschrift, {V}ol.\ {I}, Progr. Math., vol.~86,
  Birkh\"auser Boston, Boston, MA, 1990, pp.~333--400.

\bibitem[BLR90]{neronmodels}
S.~Bosch, W.~L{\"u}tkebohmert, and M.~Raynaud, \emph{N\'eron models},
  Springer-Verlag, Berlin, 1990. \MR{91i:14034}

\bibitem[Bro14]{brooks:shimura}
H.~Brooks, \emph{Shimura curves and special values of $p$-adic $l$-functions},
  to appear in Intl. Math. Res. Notices (2014).

\bibitem[Bur14]{burungale:mu2}
A.~Burungale, \emph{On the non-triviality of generalized {H}eegner cycles
  modulo {$p$}, {II}: {S}himura curves}, preprint (2014).

\bibitem[Buz97]{buzzard:integral}
K.~Buzzard, \emph{Integral models of certain {S}himura curves}, Duke Math. J.
  \textbf{87} (1997), no.~3, 591--612.

\bibitem[Cas15]{castella:exceptional}
F.~Castella, \emph{On the exceptional specializations of big {H}eegner points},
  preprint (2015).

\bibitem[CCO14]{chai-conrad-oort}
C.-L. Chai, B.~Conrad, and F.~Oort, \emph{Complex multiplication and lifting
  problems}, Mathematical Surveys and Monographs, vol. 195, American
  Mathematical Society, Providence, RI, 2014.

\bibitem[CST14]{cai-tian:gz}
L.~Cai, J.~Shu, and Y.~Tian, \emph{Explicit {G}ross-{Z}agier and {W}aldspurger
  formulae}, Algebra Number Theory \textbf{8} (2014), no.~10, 2523--2572.

\bibitem[FH95]{friedberg-hoffstein:annals}
S.~Friedberg and J.~Hoffstein, \emph{Nonvanishing theorems for automorphic
  {$L$}-functions on {${\rm GL}(2)$}}, Ann. of Math. (2) \textbf{142} (1995),
  no.~2, 385--423.

\bibitem[FM95]{fontainemazur:geom}
J.-M. Fontaine and B.~Mazur, \emph{Geometric {G}alois representations},
  Elliptic curves, modular forms, \& {F}ermat's last theorem ({H}ong {K}ong,
  1993), Ser. Number Theory, I, Int. Press, Cambridge, MA, 1995, pp.~41--78.

\bibitem[Gre94]{greenberg:defmotives}
R.~Greenberg, \emph{Iwasawa theory and {$p$}-adic deformations of motives},
  Motives ({S}eattle, {WA}, 1991), Proc. Sympos. Pure Math., vol.~55, Amer.
  Math. Soc., Providence, RI, 1994, pp.~193--223.

\bibitem[Gre99]{greenberg:iwasawa-elliptic}
\bysame, \emph{Iwasawa theory for elliptic curves}, Arithmetic theory of
  elliptic curves ({C}etraro, 1997), Lecture Notes in Math., vol. 1716,
  Springer, Berlin, 1999, pp.~51--144.

\bibitem[Gro91]{gross:kolyvagin}
B.\thinspace{}H. Gross, \emph{Kolyvagin's work on modular elliptic curves},
  $L$-functions and arithmetic (Durham, 1989), Cambridge Univ. Press,
  Cambridge, 1991, pp.~235--256.

\bibitem[GV00]{greenberg-vatsal}
R.~Greenberg and V.~Vatsal, \emph{On the {I}wasawa invariants of elliptic
  curves}, Invent. Math. \textbf{142} (2000), no.~1, 17--63.

\bibitem[GZ86]{gross-zagier}
B.~Gross and D.~Zagier, \emph{Heegner points and derivatives of
  \protect{${L}$}-series}, Invent. Math. \textbf{84} (1986), no.~2, 225--320.

\bibitem[Has95]{hashimoto}
K.~Hashimoto, \emph{Explicit form of quaternion modular embeddings}, Osaka J.
  Math. \textbf{32} (1995), no.~3, 533--546.

\bibitem[Hel07]{Helm}
D.~Helm, \emph{On maps between modular {J}acobians and {J}acobians of {S}himura
  curves}, Israel J. Math. \textbf{160} (2007), 61--117.

\bibitem[Hid88]{hida:padic2}
H.~Hida, \emph{A {$p$}-adic measure attached to the zeta functions associated
  with two elliptic modular forms. {II}}, Ann. Inst. Fourier (Grenoble)
  \textbf{38} (1988), no.~3, 1--83.

\bibitem[How04]{howard:heeg}
B.~Howard, \emph{The {H}eegner point {K}olyvagin system}, Compos. Math.
  \textbf{140} (2004), no.~6, 1439--1472.

\bibitem[Jet08]{jetchev:tamagawa}
D.~Jetchev, \emph{Global divisibility of {H}eegner points and {T}amagawa
  numbers}, Compos. Math. \textbf{144} (2008), no.~4, 811--826.

\bibitem[KM85]{KatzMazur}
N.\thinspace{}M. Katz and B.~Mazur, \emph{Arithmetic moduli of elliptic
  curves}, Princeton University Press, Princeton, N.J., 1985.

\bibitem[Kob13]{kobayashi:gz}
S.~Kobayashi, \emph{The {$p$}-adic {G}ross-{Z}agier formula for elliptic curves
  at supersingular primes}, Invent. Math. \textbf{191} (2013), no.~3, 527--629.

\bibitem[Kol90]{kolyvagin:euler_systems}
V.~A. Kolyvagin, \emph{Euler systems}, The Grothendieck Festschrift, Vol.\ II,
  Birkh\"auser Boston, Boston, MA, 1990, pp.~435--483.

\bibitem[Kol91a]{kolyvagin:structure_of_selmer}
\bysame, \emph{On the structure of {S}elmer groups}, Math. Ann. \textbf{291}
  (1991), no.~2, 253--259.

\bibitem[Kol91b]{kolyvagin:structureofsha}
V.\thinspace{}A. Kolyvagin, \emph{On the structure of {S}hafarevich-{T}ate
  groups}, Algebraic geometry (Chicago, IL, 1989), Springer, Berlin, 1991,
  pp.~94--121.

\bibitem[LZZ13]{liu-zhang-zhang}
Y.~Liu, S.~Zhang, and W.~Zhang, \emph{On $p$-adic {W}aldspurger formula},
  preprint (2013).

\bibitem[McC88]{mccallum:sha}
W.\thinspace{}G. McCallum, \emph{On the {S}hafarevich-{Tate} group of the
  {J}acobian of a quotient of a {F}ermat curve}, Invent. Math. \textbf{53}
  (1988), no.~3, 637--666.

\bibitem[Mil86]{milne:duality}
J.\thinspace{}S. Milne, \emph{Arithmetic duality theorems}, Academic Press
  Inc., Boston, Mass., 1986.

\bibitem[Mor11]{mori}
A.~Mori, \emph{Power series expansions of modular forms and their interpolation
  properties}, Int. J. Number Theory \textbf{7} (2011), no.~2, 529--577.

\bibitem[MR04]{mazur-rubin:kolyvagin_systems}
B.~Mazur and K.~Rubin, \emph{Kolyvagin systems}, Mem. Amer. Math. Soc.
  \textbf{168} (2004), no.~799, viii+96.

\bibitem[Nek07]{nekovar:euler}
J.~Nekov{\'a}{\v{r}}, \emph{The {E}uler system method for {CM} points on
  {S}himura curves}, {$L$}-functions and {G}alois representations, London Math.
  Soc. Lecture Note Ser., vol. 320, Cambridge Univ. Press, Cambridge, 2007,
  pp.~471--547.

\bibitem[PR87]{perrin-riou:mc}
B.~Perrin-Riou, \emph{Fonctions {$L$} {$p$}-adiques, th\'eorie d'{I}wasawa et
  points de {H}eegner}, Bull. Soc. Math. France \textbf{115} (1987), no.~4,
  399--456.

\bibitem[PR03]{perrin-riou:expmath}
Bernadette Perrin-Riou, \emph{Arithm\'etique des courbes elliptiques \`a
  r\'eduction supersinguli\`ere en {$p$}}, Experiment. Math. \textbf{12}
  (2003), no.~2, 155--186.

\bibitem[Rib90]{ribet:modreps}
K.\thinspace{}A. Ribet, \emph{On modular representations of \protect{${\rm
  {G}al}(\overline{\bf{Q}}/{\bf {Q}})$} arising from modular forms}, Invent.
  Math. \textbf{100} (1990), no.~2, 431--476.

\bibitem[RT97]{ribet-takahashi}
K.~Ribet and S.~Takahashi, \emph{Parametrizations of elliptic curves by
  {S}himura curves and by classical modular curves}, Proc. Nat. Acad. Sci.
  U.S.A. \textbf{94} (1997), no.~21, 11110--11114, Elliptic curves and modular
  forms (Washington, DC, 1996).

\bibitem[Rub00]{rubin:book}
K.~Rubin, \emph{{E}uler {S}ystems}, Princeton University Press, Spring 2000,
  {A}nnals of {M}athematics {S}tudies {\bf 147}.

\bibitem[Sai97]{saito}
T.~Saito, \emph{Modular forms and \protect{$p$}-adic {H}odge theory}, Invent.
  Math. \textbf{129} (1997), 607--620.

\bibitem[Ski14a]{skinner:conversegz}
C.~Skinner, \emph{A converse to a theorem of {G}ross, {Z}agier, and
  {K}olyvagin}, preprint (2014).

\bibitem[Ski14b]{skinner:multred}
\bysame, \emph{Multiplicative reduction and the cyclotomic main conjecture for
  $\mathbf{GL}_2$}, preprint (2014).

\bibitem[SU13]{skinner-urban:gl2}
C.~Skinner and E.~Urban, \emph{The main conjecture for $\mathbf{GL}_2$}, to
  appear in Invent. Math. (2013).

\bibitem[SZ14]{skinner-zhang}
C.~Skinner and W.~Zhang, \emph{Indivisibility of heegner points in the
  multiplicative case}, preprint (2014).

\bibitem[Tat63]{tate:duality}
J.~Tate, \emph{Duality theorems in {G}alois cohomology over number fields},
  Proc. Internat. Congr. Mathematicians (Stockholm, 1962), Inst.
  Mittag-Leffler, Djursholm, 1963, pp.~288--295. \MR{31 \#168}

\bibitem[Tat74]{tate:arithmetic}
\bysame, \emph{The arithmetic of elliptic curves}, Invent. Math. \textbf{23}
  (1974), 179--206. \MR{54 \#7380}

\bibitem[Tat66]{tate:bsd}
\bysame, \emph{On the conjectures of {B}irch and {S}winnerton-{D}yer and a
  geometric analog}, S\'eminaire Bourbaki, Vol.\ 9, Soc. Math. France, Paris,
  1965/66, pp.~Exp.\ No.\ 306, 415--440.

\bibitem[TY07]{tayloryoshida:local-global}
R.~Taylor and T.~Yoshida, \emph{Compatibility of local and global {L}anglands
  correspondences}, J. Amer. Math. Soc. \textbf{20} (2007), no.~2, 467--493.

\bibitem[Wan13]{wan:rankin}
X.~Wan, \emph{Iwasawa main conjecture for {R}ankin--{S}elberg $p$-adic
  {$L$}-functions}, preprint (2013).

\bibitem[Wan14a]{wan:rankin-ss}
\bysame, \emph{Iwasawa main conjecture for {R}ankin--{S}elberg $p$-adic
  $l$-functions: non-ordinary case}, preprint (2014).

\bibitem[Wan14b]{wan:kobayashi}
\bysame, \emph{Iwasawa main conjecture for supersingular elliptic curves},
  preprint (2014).

\bibitem[YZZ13]{yuanzhangzhang:gz}
X.~Yuan, S.-W. Zhang, and W.~Zhang, \emph{The {G}ross-{Z}agier formula on
  {S}himura curves}, Annals of Mathematics Studies, vol. 184, Princeton
  University Press, Princeton, NJ, 2013.

\bibitem[Zha01a]{zhang:gz-shimura}
S.~Zhang, \emph{Heights of {H}eegner points on {S}himura curves}, Ann. of Math.
  (2) \textbf{153} (2001), no.~1, 27--147.

\bibitem[Zha01b]{zhang:gross-zagier}
S.~W. Zhang, \emph{Gross-{Z}agier formula for $\textrm{GL}_2$}, Asian J. Math.
  \textbf{5} (2001), no.~2, 183--290.

\bibitem[Zha14]{zhang:selheeg}
W.~Zhang, \emph{Selmer groups and divisiblity of {H}eegner points}, preprint
  (2014).

\end{thebibliography}

\end{document}